\newcommand\Z{\mathbb Z}
\newcommand{\cha}{\operatorname{char}}
\newcommand{\gal}{\operatorname{Gal}}
\newcommand{\im}{\operatorname{im}}
\newcommand\ph\varphi
\newcommand\ps\psi
\newcommand\ep\varepsilon
\newcommand\rh\varrho
\newcommand\al\alpha
\newcommand\be\beta
\newcommand\ga\gamma
\newcommand\om\omega
\newcommand\ta\tau
\renewcommand\th\theta
\newcommand\de\delta
\newcommand\ze\zeta
\newcommand\ch\chi
\newcommand\et\eta
\newcommand\io\iota
\newcommand\la\lambda
\newcommand\si\sigma
\newcommand\Ga\Gamma
\newcommand\De\Delta
\newcommand\Th\Theta
\newcommand\La\Lambda
\newcommand\Si\Sigma
\newcommand\Ph\Phi
\newcommand\Ps\Psi
\newcommand\Om\Omega
\newtheorem{theorem}{Theorem}[section]
\newtheorem{lemma}[theorem]{Lemma}
\newtheorem{proposition}[theorem]{Proposition}
\newtheorem{corollary}[theorem]{Corollary}
\theoremstyle{definition}
\newtheorem{example}[theorem]{Example}
\theoremstyle{remark}
\begin{document}
\title{Hereditarily non-pythagorean Fields}
\author{David Grimm and David B. Leep}
\address{David Grimm, Departamento de Matem\'atica y Ciencia de la Computaci\'on, Universidad de Santiago de Chile,  Santiago de Chile, Chile}
\email{david.grimm@usach.cl}
\address{David B. Leep\\
Department of Mathematics\\ University of Kentucky\\ Lexington, KY 40506-0027 U.S.A. } \email{leep@uky.edu}

\begin{abstract}
We prove for a large class of fields $F$ that every proper finite extension of 
$F_{pyth}$, the pythagorean closure of $F$, is not a pythagorean field.  
This class of fields contains number fields and fields $F$ that are 
finitely generated of transcendence degree at least one over some subfield of $F$.   
\end{abstract}

\subjclass[2010]{Primary 11E25, 11E10,  12D15; \\ Secondary 11E04, 11E12, 12F05, 12F10, 12F15. \newline Keywords: sums of squares, pythagorean field, pythagorean closure, quadratic closure, quadratically closed field, 
hereditarily pythagorean, number field, function field, valuations}

\maketitle

\section{Introduction}
A field  is called \textit{pythagorean} if every sum of squares is a square, and is called \textit{quadratically closed} if every element is a square.
In a nonreal field of characteristic different from $2$, every element is a sum of squares, and thus
such a field is quadratically closed if and only if it is pythagorean. Trivially, every field of characteristic $2$ is pythagorean.

For a field  $E$ we let  $E_\mathrm{pyth}$ denote the direct limit of all finite extensions of $E$ that consist of chains of quadratic extensions obtained by iteratively adjoining square roots (inside a fixed algebraic closure of $E$) of sums of two squares in the predecessor. 
Similarly, we let $E_\mathrm{quad}$ denote the direct limit of finite chains of all quadratic extensions obtained by iteratively adjoining square roots of arbitrary elements in the predecessor.  The field $E_\mathrm{pyth}$ is called the \textit{pythagorean closure} of $E$, and is a pythagorean field. The field $E_\mathrm{quad}$ is called the \textit{quadratic closure} of $E$, and is a quadratically closed field. In particular, $E_\mathrm{quad}$ is always nonreal.
If $E$ is nonreal and of characteristic different from $2$, then $E_\mathrm{pyth} = E_\mathrm{quad}$.

We say that a field $k$ is \textit{hereditarily non-pythagorean} if every proper 
finite extension $F$, with $F/k$ not purely inseparable, is non-pythagorean.  
By \cite[Chapter VIII, Theorem 5.7]{La-05}, every
non-pythagorean field is hereditarily non-pythagorean. The notion is thus only 
interesting for pythagorean fields of characteristic different from $2$. Similarly, a field $k$  is 
\textit{hereditarily non-quadratically closed} if every proper finite extension $F$, 
with $F/k$ not purely inseparable, is not quadratically closed.
By \cite[Chapter VIII, Corollary 5.11]{La-05} and its proof, every nonreal, 
non-quadratically closed field of characteristic different from $2$ is hereditarily non-quadratically closed. Hence for nonreal fields of characteristic different from $2$ the 
notion is interesting only for quadratically closed fields. A field of characteristic $2$ is quadratically closed if and only if it is perfect. Hence, every quadratically closed field of characteristic $2$ is hereditarily quadratically closed.

 If a field $F$ is hereditarily non-pythagorean, then it is hereditarily non-quadratically closed. The converse does not hold. For example,
 $\mathbb{R}(\!(t)\!)$ is hereditarily non-quadratically closed,
but it is not hereditarily non-pythagorean. In fact, $\mathbb{R}(\!(t)\!)$ is hereditarily pythagorean. Recall  from \cite{B-78} that a field $F$ is hereditarily pythagorean if $F$ is real pythagorean, and every finite real extension is also pythagorean.

In the definition of hereditarily non-pythagorean fields, we exclude the case where $F/k$ is purely inseparable, since if $k$ is nonreal pythagorean (or quadratically closed of characteristic $2$) and $F/k$ is  a finite 
purely inseparable extension, then $F$ is also  pythagorean (or quadratically closed of characteristic $2$). This is trivial in the case of characteristic $2$ and follows from \Cref{inseparable} otherwise.

The main results of this paper (\Cref{main theorem} and \Cref{main theorem II})
are that if $E$ is either an algebraic number field 
(a finite extension of $\mathbb{Q}$) or a field of characteristic different from $2$ that is finitely generated 
of transcendence degree at least one over some subfield, then 
$E_\mathrm{pyth}$ is hereditarily non-pythagorean.  

Previously, a version of this problem was studied for $E_\mathrm{quad}$ when $E$ is an algebraic number field. It was shown in
\cite[Chapter VII, Corollary 7.11]{La-05} that every proper finite extension of $E_\mathrm{quad}$ has infinitely many square classes. In particular, $E_\mathrm{quad}$ is hereditarily non-quadratically closed.

We further discuss the cases when $E$ is an infinite dimensional algebraic 
extension of a number field or an infinite dimensional algebraic extension of 
a function field of characteristic different from $2$. We prove in \Cref{new} 
that the same result as above holds under the additional assumption that $E$ is 
a Galois extension over some number field or over some function field of characteristic different from $2$.
In \Cref{example}, we construct a simple example of a nonreal infinite number 
field $E$ that is not pythagorean and not Galois over some number field, 
and such that 
every finite extension of $E_\mathrm{pyth}$ is  pythagorean. 
We prove some results in Section 7 that give a construction of a real 
infinite number field whose
pythagorean closure is not hereditarily non-pythagorean.

Hereditary properties of pythagorean closures and quadratic closures have been 
studied for other fields $E$.
In \cite[Chapter VII, Theorems 7.17, 7.18]{La-05}, it was shown that if $E$ is a local number field, then $E_\mathrm{quad}$ is hereditarily quadratically closed if $E$ is nondyadic, and is hereditarily non-quadratically closed  if $E$ is dyadic.
Becker wrote an extensive treatment of hereditarily pythagorean fields in \cite{B-78}. 

We let $\mathbb{Z}$ and $\mathbb{Q}$ denote the ring of integers and the field of rational numbers, respectively.  
A number field is a finite algebraic extension of $\mathbb{Q}$.  
An infinite number field will mean an infinite algebraic extension of $\mathbb{Q}$. 
A local field is a completion of a number field with respect to some nonarchimedean absolute value.  
A local field is dyadic if its residue field has characteristic $2$, and is nondyadic if its residue field has characteristic different from $2$.  
For a prime number $p$, we let $\mathbb{Q}_p$ denote the completion of 
$\mathbb{Q}$ with respect to the $p$-adic absolute value.
 
For a field $K$, we let $K^{\times} = K \backslash \{0\}$ and we let
$\cha K$ denote the characteristic of $K$.  
We let $K^2$ denote the set of squares of elements of $K$, and we
let $\sum K^2$ denote the set of sums of squares of elements in $K$. 
Let $(\sum K^2)^{\times} = (\sum K^2) \backslash \{0\}$.

The pythagoras number of a field $K$, written $p(K)$, is defined as 
the smallest integer $n$ such that each element in $\sum K^2$ can be written
as a sum of $n$ squares of elements in $K$.  If no such integer exists, we set
$p(K) = \infty$.

A field $K$ is nonreal if $-1 \in \sum K^2$.  Otherwise, a field is real.
A field $K$ is real if and only if $K$ has an ordering by 
\cite[Chapter VIII, Theorem 1.10]{La-05}.  
If $K$ is nonreal and $\cha K \ne 2$, then $K = \sum K^2$.
A field $K$ is pythagorean if $\sum K^2 = K^2$, and is quadratically closed if
$K = K^2$. Thus if $K$ is a pythagorean field, then $p(K) = 1$.

The topic of this paper fits into the broader context of studying the exact value of the pythagoras number and how the pythagoras number behaves under field extensions. 
Pfister studied the relation of the pythagoras number of a field $K$ and the pythagoras number of the rational function field $K(X)$ ( \cite[Chapter XI, Theorem 5.6 and Examples 5.9 (3)]{La-05} ). For $K= \mathbb{Q}_\mathrm{pyth}$, this yields 
$3 \leq p(\mathbb{Q}_\mathrm{pyth}(X))\leq 4$.
The pythagoras numbers of  hyperelliptic function fields over real fields were studied in \cite{BV-09}. For additional results on the behaviour of pythagoras numbers, see \cite[Chapter 7]{Pf-95}.

We use \cite{La-05} as a standard reference for other undefined terms and 
standard results.

\section{Some general results}

In this section we establish some general results and strategies that allow us to 
prove that certain finite extensions of pythagorean closures are not pythagorean.  One of the tools we use is valuation theory.
We will say that $v$ is a nonreal valuation on a field $K$ if the residue field of 
$(K,v)$ is a nonreal field.

\begin{lemma}\label{nonreal valuation}
Let $K$ be a field of characteristic different from $2$.  Let $v$ be a nonreal 
discrete valuation on $K$ with value group $\mathbb{Z}$.  
Then there exists an element $a \in \sum K^2$ with $v(a) = 1$.
\end{lemma}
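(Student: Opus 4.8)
The plan is to first reduce the problem to producing a single sum of squares of \emph{odd} valuation, and then to build one by lifting a sum-of-squares relation from the nonreal residue field. Write $\mathcal{O}$ for the valuation ring of $(K,v)$, $\mathfrak{m}$ for its maximal ideal, $k=\mathcal{O}/\mathfrak{m}$ for the residue field, and fix a uniformizer $\pi$ with $v(\pi)=1$. Since the product of two sums of squares is again a sum of squares and every $\pi^{2m}=(\pi^m)^2$ lies in $\sum K^2$, the value set $S:=v\bigl((\sum K^2)^{\times}\bigr)$ is a sub-semigroup of $\mathbb{Z}$ containing $2\mathbb{Z}$; hence $S$ is either $2\mathbb{Z}$ or all of $\mathbb{Z}$. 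Moreover, if $a\in\sum K^2$ has odd valuation $v(a)=2m+1$, then $a\,\pi^{-2m}=\sum(x_i\pi^{-m})^2$ again lies in $\sum K^2$ and has valuation $1$. So it suffices to exhibit one sum of squares of odd valuation.

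The raw material is a lifted relation. Since $k$ is nonreal, $-1$ is a sum of squares in $k$, so there are $b_1,\dots,b_n\in\mathcal{O}$, with $b_1$ a unit, such that $a_0:=1+b_1^2+\cdots+b_n^2$ reduces to $0$ in $k$; thus $a_0\in\sum K^2$ and $v(a_0)\geq 1$. When $\operatorname{char}k\neq 2$ this is already enough. Here $2$ is a unit, so I perturb $b_1$ by the uniformizer and set
\[
a:=1+(b_1+\pi)^2+\sum_{i=2}^{n}b_i^2=a_0+2b_1\pi+\pi^2 .
\]
Since $2b_1$ is a unit, $v(2b_1\pi)=1<2=v(\pi^2)$, so if $v(a_0)\geq 2$ then $v(a)=1$, while if $v(a_0)=1$ I take $a_0$ itself. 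In either case I obtain a sum of squares of valuation $1$.

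The residue characteristic $2$ case is where the real work lies, and I expect it to be the main obstacle. There nonreality is automatic, but Frobenius additivity gives $\overline{\sum x_i^2}=(\sum\bar x_i)^2$, which renders the perturbation above inert: writing $e:=v(2)\geq 1$, every correction term $2b_i\pi t$ lands in $\mathfrak{m}^{e+1}$, and in fact one checks that $v(1+\sum b_i^2)\geq 2$ for \emph{every} lift once $e\geq 2$. I would therefore split on the parity of $e$. If $e$ is odd, then $2=1^2+1^2$ already has odd valuation and the reduction of the first paragraph finishes the proof. If $e$ is even, then sums of two squares attain only even valuations, so the odd valuation must instead be forced through the term $-2\sum_{i<j}x_ix_j$ in the identity $\sum x_i^2=(\sum x_i)^2-2\sum_{i<j}x_ix_j$: I would choose the $x_i$ so that $\sum x_i$ has large valuation while $v(\sum_{i<j}x_ix_j)$ is odd, exploiting the structure of $k$ and, if necessary, passing to sufficiently many squares so that enough cancellation becomes available. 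Arranging such a choice uniformly, for fields $K$ that need not themselves be nonreal, is the crux I expect to have to resolve.
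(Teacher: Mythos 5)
Your reduction to producing a sum of squares of \emph{odd} valuation is correct, and your treatment of residue characteristic different from $2$ (perturbing a lift of $-1=\bar b_1^2+\cdots+\bar b_n^2$ by the uniformizer) is sound and is essentially the paper's own argument for that case. Your observation that $2=1^2+1^2$ settles residue characteristic $2$ when $e=v(2)$ is odd is also correct. But the remaining case, residue characteristic $2$ with $e$ even, is a genuine gap, and your own text concedes it: the plan of forcing odd valuation through the cross term $-2\sum_{i<j}x_ix_j$ is only a heuristic, and nothing in the sketch controls $v\bigl(\sum_{i<j}x_ix_j\bigr)$ for a general $K$ satisfying the hypotheses (such a $K$ need not be nonreal or complete, and you have not identified any structure of the residue field to exploit). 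Moreover, the claim you use to motivate this plan --- that when $e$ is even, sums of two squares attain only even valuations --- is false: in $K=\mathbb{Q}_2(\sqrt{2})$, with the normalized valuation, $v(2)=2$, yet $(1+\sqrt{2})^2+1^2=4+2\sqrt{2}$ has valuation $3$.

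The idea you are missing, and the one the paper uses to handle residue characteristic $2$ uniformly in the parity of $e$, is to pass to the completion $K_v$ and show that $K_v$ is \emph{nonreal}. Since $v(2)\geq 1$, the characteristic of $K$ is $0$ and the restriction of $v$ to $\mathbb{Q}$ is the $2$-adic valuation; the paper verifies nonreality of $K_v$ by applying Hensel's lemma to $f=x_1^2+\cdots+x_5^2$ at the point $(2,1,1,1,1)$, using $2^2+1^2+1^2+1^2+1^2=2^3$ together with $v(2^3)=3e>2e=2\,v(f'_{x_2})$ (equivalently: $-7$ is a square in $\mathbb{Q}_2\subseteq K_v$). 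Once $K_v$ is nonreal of characteristic $0$, every element of $K_v$ is a sum of squares, in particular the uniformizer $\pi$ is; writing $\pi=\sum y_i^2$ with $y_i\in K_v$ and approximating each $y_i$ by elements of $K$ (which is dense in $K_v$), the resulting element of $\sum K^2$ is $\mathcal{P}$-adically close to $\pi$ and hence has valuation exactly $1$. This completion-plus-Hensel step is precisely what replaces your unresolved ``crux,'' and without it (or some substitute of comparable strength) the proof is incomplete.
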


\begin{proof}
Let $R$ be the valuation ring of $(K,v)$, $\mathfrak{m}$ the maximal ideal,
$\pi$ the uniformizer, and $k_v$ the residue field of $v$.
Since $\kappa_{v}$ is nonreal, there is a nontrivial representation 
$-1=\overline{x_1}^2 + \cdots + \overline{x_s}^2$ where each $x_i \in R$ 
and $\overline{x_i} \in \kappa_v$. 
Then $1 + x^2_1 +\ldots +x^2_s \in \mathfrak{m}$. 
First suppose that $\cha \kappa_v \ne 2$.
Then either $v(1 + x^2_1 +\cdots +x^2_s) = 1$ or 
$v((1+\pi)^2+ x^2_1 +\cdots +x^2_s ) = 1$. 

Now assume that $\cha \kappa_v = 2$.
Let $K_v$ denote the completion of $K$ with respect to the metric induced by $v$.
Then $K_v$ is a nonreal field because $\kappa_v$ is nonreal.
To see this, consider the equation $2^2 + 1^2 + 1^2 + 1^2 + 1^2 = 2^3$.
Suppose that $(2) = (\pi^e)$.  We can apply Hensel's lemma to the
quadratic form $f = x_1^2 + x_2^2 + x_3^2 + x_4^2 + x_5^2$ because
$f(2,1,1,1,1) \equiv 0 \bmod \pi^{3e}$,  
$f_{x_2}'(2,1,1,1,1) = 2 \not\equiv 0 \bmod \pi^{e+1}$, and $3e \ge 2e + 1$. 
(For example, see \cite[Chapter VI, Theorem 2.18]{La-05}.)
Thus $f$ is isotropic over $K_v$.  Then $K_v$ is a nonreal field and every element 
of $K_v$ is a sum of squares of elements from $K_v$.  In particular, $\pi$ is a sum
of squares of elements from $K_v$.  Since $K$ is dense in $K_v$, it follows that
$K$ contains an element $a \in \sum K^2$ such that $v(a) = 1$.
\end{proof}

\begin{lemma}\label{several nonreal valuations}
Let $K$ be a field of characteristic different from $2$, and let $v_1,\ldots,v_n$ be a finite number of distinct 
nonreal discrete valuations on $K$ each with value group $\mathbb{Z}$.
Then the induced homomorphism of groups
\[\tau: \left(\sum K^2\right)^\times \to \mathbb{Z}^n \text{ given by }
\sigma \mapsto (v_1(\sigma), \ldots, v_n(\sigma) )\]
is surjective.
\end{lemma}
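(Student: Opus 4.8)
The plan is to exploit that $\tau$ is a homomorphism of groups, so that its image is a subgroup of $\mathbb{Z}^n$; it therefore suffices to show that each standard basis vector $e_i$ lies in the image. Fixing $i$, the goal becomes the construction of an element $\sigma \in \left(\sum K^2\right)^\times$ with $v_i(\sigma) = 1$ and $v_j(\sigma) = 0$ for all $j \neq i$, since then $\tau(\sigma) = e_i$.

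To produce the correct value at $v_i$, I would first apply \Cref{nonreal valuation} to the pair $(K, v_i)$, obtaining $a \in \sum K^2$ with $v_i(a) = 1$, and write $a = \sum_{k=1}^m c_k^2$ with $c_k \in K$. This element already has the desired behaviour at $v_i$, but its values $v_j(a)$ for $j \neq i$ are completely uncontrolled. The idea is then to perturb the representation $a = \sum_k c_k^2$ slightly, so as to repair the remaining valuations while simultaneously preserving the value at $v_i$ and the property of being a sum of squares.

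The perturbation is carried out with the weak approximation theorem. Since $v_1, \ldots, v_n$ are distinct discrete valuations of rank one, they are pairwise independent, so weak approximation applies: for any prescribed precision $N$ there exist $u_1, \ldots, u_m \in K$ with $v_i(u_k - c_k) \geq N$ for every $k$, with $v_j(u_1 - 1) \geq N$ for all $j \neq i$, and with $v_j(u_k) \geq N$ for all $j \neq i$ and all $k \geq 2$. Setting $\sigma = \sum_{k=1}^m u_k^2$, a routine estimate based on $u_k^2 - c_k^2 = (u_k - c_k)(u_k + c_k)$ shows that $v_i(\sigma - a) > v_i(a) = 1$ once $N$ is large, whence $v_i(\sigma) = v_i(a) = 1$; in particular $\sigma \neq 0$, so $\sigma \in \left(\sum K^2\right)^\times$. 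Likewise $v_j(\sigma - 1) \geq N > 0$ forces $v_j(\sigma) = 0$ for each $j \neq i$. Hence $\tau(\sigma) = e_i$, and letting $i$ range over $1, \ldots, n$ gives surjectivity.

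The essential obstacle, and the reason that \Cref{nonreal valuation} combined with approximation is the right tool, is that weak approximation by itself only yields elements of $K$ with prescribed valuations, and such elements need not be sums of squares. The device that overcomes this is to approximate not a single field element but an entire sum-of-squares representation term by term: because $(u_1, \ldots, u_m) \mapsto \sum_k u_k^2$ is continuous for each $v_\ell$, a sufficiently good approximation of the $c_k$ (and of the constants $1$ and $0$) yields a genuine sum of squares in $K$ whose valuations agree with the intended local targets. The only remaining points to verify are the independence of the $v_\ell$ needed for weak approximation and the elementary valuation estimates, for which it suffices to choose $N$ larger than all of the finitely many integers $v_\ell(c_k)$ that occur.
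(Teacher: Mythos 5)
Your proof is correct, but it works by a different mechanism than the paper's. Both arguments reduce surjectivity to hitting the standard basis vectors, invoke \Cref{nonreal valuation} to get an element of $\sum K^2$ with value $1$ at the distinguished valuation, and use weak approximation; the difference lies in how the values at the remaining valuations are repaired. The paper never touches the sum-of-squares representation of $y$: it first observes that $(2\mathbb{Z})^n\subseteq\im(\tau)$ (images of squares $a^2$ with $a$ chosen by weak approximation), then adds a \emph{single} square $z^2$ to $y$, with $v_1(z)$ large and $v_i(z)$ very negative for $i\ge 2$, so that $y+z^2$ has odd value at $v_1$ and even values elsewhere; this lands in $e_1+(2\mathbb{Z})^n$, and the subgroup structure of the image finishes the argument. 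You instead hit $e_i$ exactly in one construction, by approximating the representation $a=\sum_k c_k^2$ term by term at $v_i$ while simultaneously approximating $(1,0,\dots,0)$ at the other valuations, using continuity of $(u_1,\dots,u_m)\mapsto\sum_k u_k^2$. Your route buys a more direct conclusion (no parity bookkeeping, no separate $(2\mathbb{Z})^n$ step) at the cost of $m$ simultaneous approximations and the product estimates on $u_k^2-c_k^2$; the paper's route keeps the valuation-theoretic manipulations to a single auxiliary element $z$ with exactly prescribed values, which is why its estimates are one-line ultrametric comparisons. (Your term-by-term approximation idea is in fact the same device the paper uses elsewhere, in the characteristic-$2$ case of \Cref{nonreal valuation} and in the alternative proof of \Cref{global sums of squares}, so it is very much in the spirit of the paper, just not of this particular proof.) All steps you sketch are sound: distinct valuations with value group $\mathbb{Z}$ are pairwise inequivalent, hence independent, so weak approximation applies, and your choice of $N$ dominating the finitely many values $v_\ell(c_k)$ (and compensating for possible poles of the $c_k$) makes the ultrametric estimates go through.
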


\begin{proof}
It is sufficient to show that the canonical $\mathbb{Z}$-basis of $\mathbb{Z}^n$, 
$\{e_1, \ldots, e_n\}$, is contained in the image.
We first show that $(2\mathbb{Z})^n$ lies in the image.  
Let $(2b_1, \ldots, 2b_n) \in (2\mathbb{Z})^n$ be given.  By the 
weak approximation theorem for discrete valuations, there exists 
$a \in K^{\times}$ such that $(v_1(a), \ldots, v_n(a)) = (b_1, \ldots, b_n)$.  
Then $\tau(a^2)  = (2b_1, \ldots, 2b_n)$.

By \Cref{nonreal valuation}, there exists $y \in \sum K^2$ such that $v_1(y)=1$. 
Now choose $m_1,\ldots, m_n \in \mathbb{Z}$ such that 
$m_1 > \frac{1}{2} v_1(y)$ and $m_i < \frac{1}{2} v_i(y)$ 
for $ 2\leq i \leq n$.  

By weak approximation, there exists $z \in K$ such that $v_i(z) =m_i$ for 
$1 \leq i \leq n$.
Then $v_1(y + z^2) = v_1(y) = 1$ and $v_i(y+ z^2)=v_i(z^2) \in 2 \mathbb{Z}$ for all $2\leq i \leq n$.  
Then $\tau(y + z^2) \in e_1 + (2\mathbb{Z})^n$.  Since $(2\mathbb{Z})^n \in \im(\tau)$, it follows that $e_1 \in \im(\tau)$.  The same holds for each other
basis element $e_i$, $i \ge 2$.
\end{proof}

\noindent
Remark:
\Cref{nonreal valuation} and 
\Cref{several nonreal valuations} are variants of  \cite[Propositons 4.2 and 4.10]{BGV}.  Note that  \cite[Propositons 4.2 and 4.10]{BGV} do not consider the case when a residue field has characteristic $2$.

\begin{lemma}\label{complete splitting}
Let $K/E$ be a proper finite extension of degree $n$ of fields of characteristic different from $2$. Suppose there exists a nontrivial, nonreal discrete valuation $v$ on $E$ that extends to 
$n$ distinct valuations on $K$.  Then there exists 
$\sigma \in \left(\sum K^2\right) \backslash EK^2$.
\end{lemma}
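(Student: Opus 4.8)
The plan is to transfer the problem to the $n$ extensions of $v$ on $K$ and then invoke \Cref{several nonreal valuations}. Let $w_1,\dots,w_n$ be the $n$ distinct extensions of $v$ to $K$, and let $e_i$ and $f_i$ denote the ramification index and residue degree of $w_i$ over $v$. The fundamental inequality for extensions of a valuation to a finite extension gives $\sum_{i=1}^n e_i f_i \le [K:E] = n$. Since each $e_i f_i \ge 1$ and there are exactly $n$ summands, we must have $e_i = f_i = 1$ for every $i$. Because $e_i = 1$, the valuation $w_i$ has value group $\mathbb{Z}$ and restricts to $v$ on $E$; because $f_i = 1$, its residue field equals $\kappa_v$ and is therefore nonreal. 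Thus $w_1,\dots,w_n$ are $n$ distinct nonreal discrete valuations on $K$, each with value group $\mathbb{Z}$, so \Cref{several nonreal valuations} applies to them.

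Applying that lemma, the homomorphism
\[\tau \colon \left(\sum K^2\right)^\times \to \mathbb{Z}^n, \qquad \sigma \mapsto (w_1(\sigma),\dots,w_n(\sigma)),\]
is surjective. The role of $\tau$ is to detect membership in $EK^2$ via a parity obstruction. For $a \in E^\times$ we have $w_i(a) = v(a)$ for every $i$ (since $w_i$ restricts to $v$), so $\tau(a)$ has all coordinates equal; and for $k \in K^\times$ we have $\tau(k^2) = (2w_1(k),\dots,2w_n(k)) \in (2\mathbb{Z})^n$. Hence any element of $EK^2$ lying in $\left(\sum K^2\right)^\times$ has a $\tau$-image whose coordinates are pairwise congruent modulo $2$.

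It remains to exhibit a sum of squares whose $\tau$-image breaks this congruence. Because $K/E$ is proper, $n = [K:E] \ge 2$, so the vector $(1,0,\dots,0) \in \mathbb{Z}^n$ has coordinates that are not all congruent modulo $2$. By surjectivity of $\tau$ there is some $\sigma \in \left(\sum K^2\right)^\times$ with $\tau(\sigma) = (1,0,\dots,0)$, and by the previous paragraph this $\sigma$ cannot belong to $EK^2$. This yields the desired element of $\left(\sum K^2\right) \setminus EK^2$.

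The step I expect to be the main obstacle is the first one: showing that complete splitting forces $e_i = f_i = 1$, so that the extended valuations genuinely satisfy the hypotheses of \Cref{several nonreal valuations} (nonreal residue field and value group $\mathbb{Z}$). This rests on combining the fundamental inequality $\sum_{i=1}^n e_i f_i \le n$ with the hypothesis that $v$ has exactly $n$ extensions, which pins down every $e_i f_i = 1$. Once that is in place, the remainder is the clean parity computation above, whose only other input is that a proper extension forces $n \ge 2$.
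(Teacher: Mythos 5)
Your proposal is correct and takes essentially the same route as the paper's own proof: apply \Cref{several nonreal valuations} to the $n$ extensions of $v$ and then rule out membership in $EK^2$ by a parity comparison of the values $w_1(\sigma),\dots,w_n(\sigma)$. The only difference is presentational: you make explicit, via the fundamental inequality $\sum_{i} e_i f_i \le n$, why complete splitting forces each $e_i = f_i = 1$ (so that the $w_i$ have value group $\mathbb{Z}$, nonreal residue field, and restrict to $v$ on $E$), a point the paper uses implicitly and only acknowledges in the remark following its proof.
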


\begin{proof}
Let $w_1, \ldots, w_n$ denote the $n$ distinct valuations on $K$ that extend $v$.
We may assume that the value group of each $w_i$ is equal to $\mathbb{Z}$. 
By \Cref{several nonreal valuations}, there exists 
$\sigma \in  \sum K^2$ such that $w_1(\sigma)$ is odd and $w_i(\sigma)$ is even for $i \geq 2$.
Suppose that $\left(\sum K^2\right) \subseteq EK^2$. 
Then we can find $\sigma$ with the same properties and such that $\sigma \in E$.  
But this yields a contradiction, since on the one hand 
$w_1(\sigma) = v(\sigma) = w_i(\sigma)$ for $i \geq 2$, 
and on the other hand $w_1(\sigma) \neq w_i(\sigma)$ for $i \ge 2$. 
\end{proof}

\noindent
Remark:  The proof above requires only that $v$ has at least two distinct
unramified extensions on $K$.  
In fact, P. Gupta observed that the two extensions do not even need to 
be unramified.
However, in view of our applications later, 
it is convenient to assume the stronger hypothesis that $v$ splits completely in $K$.

\medskip

\noindent
As already mentioned in the introduction, the following result shows that non-pythagorean fields are hereditarily non-pythagorean.
\begin{proposition}\label{Diller-Dress}
Let $E/F$ be a finite extension of fields. Assume that $E$ is pythagorean. Then $F$ is 
pythagorean.
\end{proposition}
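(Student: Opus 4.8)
The plan is to argue by contraposition and induction on the degree $n=[E:F]$, reducing the whole statement to the case of a quadratic extension, which I then settle by a short explicit norm computation. First, if $\cha F = 2$ then $F$ is pythagorean and there is nothing to prove, so I assume $\cha F \neq 2$ (hence $\cha E \neq 2$). I will use throughout that a field of characteristic different from $2$ is pythagorean exactly when every sum of \emph{two} squares is a square: by induction on the number of summands, $x_1^2+\cdots+x_n^2 = s^2 + x_n^2$ is again a square once $x_1^2+\cdots+x_{n-1}^2 = s^2$. Consequently, if $F$ fails to be pythagorean there is a witness $d = a^2 + b^2 \in \left(\sum F^2\right)^\times$ with $d \notin F^2$; note that $b \neq 0$, since otherwise $d = a^2$ would be a square.

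For the induction, assume the proposition for all finite extensions of degree less than $n$, let $[E:F]=n>1$ with $E$ pythagorean, and suppose for contradiction that $F$ is not pythagorean, with $d=a^2+b^2$ as above. Since $d \in \sum E^2$ and $E$ is pythagorean, $d \in E^2$, so $\sqrt d \in E$ and $K := F(\sqrt d)$ satisfies $F \subsetneq K \subseteq E$ with $[E:K] = n/2 < n$. The induction hypothesis, applied to the extension $E/K$, then forces $K$ to be pythagorean. It therefore suffices to establish the following quadratic lemma, whose conclusion contradicts this: \emph{if $d = a^2+b^2$ is a nonsquare in $F$, then $F(\sqrt d)$ is not pythagorean.}

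To prove the lemma I would exhibit a sum of two squares in $K = F(\sqrt d)$ that is not a square there. I take
\[
 z \;=\; b^2 + (a+\sqrt d)^2 \;=\; 2d + 2a\sqrt d \;=\; 2\sqrt d\,(\sqrt d + a),
\]
which is visibly a nonzero element of $\sum K^2$. Writing $N := N_{K/F}$ and using $N(\sqrt d) = -d$ together with $N(\sqrt d + a) = a^2 - d = -b^2$, I compute $N(z) = N(2)\,N(\sqrt d)\,N(\sqrt d + a) = 4\cdot(-d)\cdot(-b^2) = 4b^2 d$. Since $d \notin F^2$ while $4b^2 \in F^2$, we get $N(z) \notin F^2$; as the norm of any square in $K$ lies in $F^2$, the element $z$ cannot be a square in $K$. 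Hence $\sum K^2 \neq K^2$, so $K$ is not pythagorean, giving the desired contradiction and closing the induction.

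The main obstacle is precisely the possibility that the chosen witness $d$ becomes a square in the larger field $E$, so that $d$ itself ceases to be a counterexample over $E$; the induction on the degree is what disposes of this, by peeling off the quadratic subextension $F(\sqrt d)$ and relocating the problem one level up via the induction hypothesis. I note that this argument never invokes separability, so it covers purely inseparable extensions as well; in particular it recovers \cite[Chapter VIII, Theorem 5.7]{La-05}, which could alternatively be cited directly for the characteristic different from $2$ case.
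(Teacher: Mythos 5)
Your proof is correct, but note how it relates to the paper: the paper gives no argument for this proposition beyond the trivial characteristic~$2$ remark, and simply cites \cite[Chapter VIII, Theorem 5.7]{La-05} (the Diller--Dress theorem) for characteristic different from~$2$ --- the very citation you offer as an alternative in your last sentence. What you have written is a complete, self-contained reconstruction of that classical argument: reduce pythagoreanity to sums of \emph{two} squares, run a strong induction on $n=[E:F]$ that uses pythagoreanity of $E$ to locate the quadratic subextension $K=F(\sqrt d)\subseteq E$, and then contradict the induction hypothesis (applied to $E/K$, of degree $n/2<n$) with the key lemma that $K$ is non-pythagorean whenever $d=a^2+b^2\notin F^2$. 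The lemma itself checks out: $z=b^2+(a+\sqrt d)^2=2\sqrt d\,(\sqrt d+a)$ is a nonzero sum of two squares in $K$ (nonzero because $\sqrt d+a=0$ would force $d=a^2\in F^2$), and $N_{K/F}(z)=4b^2d\notin F^2$ since $b\neq 0$ and $d\notin F^2$, while every square in $K$ has norm in $F^2$; hence $z\in\sum K^2\setminus K^2$. Your closing observation is also accurate and relevant: nothing in the argument uses separability of $E/F$, so purely inseparable extensions are covered, which is exactly how the paper later applies the proposition (for instance in \Cref{hnq} and \Cref{sep-purely-insep}). In short, you prove what the paper outsources to the literature; the mathematical route is the standard Diller--Dress one, driven by the same quadratic-extension norm lemma that underlies the cited theorem in \cite{La-05}.
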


\begin{proof}
This is trivially true in the case of characteristic $2$. See \cite[Chapter VIII, Theorem 5.7]{La-05} for the case of characteristic different from $2$.
\end{proof}

Let $L/F$ be any finite algebraic extension.
Then there is a unique field $K$ such that $F \subseteq K \subseteq L$ where
$K/F$ is separable and $L/K$ is purely inseparable.  
We use the convention that a field is purely inseparable over itself.

The following result gives a simpler criterion for a field to be 
hereditarily non-pythagorean

\begin{lemma}\label{hnq}
A field $k$ is hereditarily non-pythagorean if and only if every proper finite 
separable extension $F$ of $k$ is non-pythagorean. 
\end{lemma}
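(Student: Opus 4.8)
The plan is to prove the two implications separately, with the forward (``only if'') direction being essentially immediate and the reverse (``if'') direction resting on the canonical factorization of a finite extension into a separable part followed by a purely inseparable part.

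For the ``only if'' direction, I would suppose $k$ is hereditarily non-pythagorean and let $F/k$ be a proper finite separable extension. Since $F \neq k$ and $F/k$ is separable, $F/k$ cannot be purely inseparable, because an extension that is simultaneously separable and purely inseparable is trivial. Hence $F$ is one of the extensions covered by the definition of hereditarily non-pythagorean, so $F$ is non-pythagorean. This direction requires no real work.

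For the ``if'' direction, I would assume that every proper finite separable extension of $k$ is non-pythagorean, and let $F/k$ be an arbitrary proper finite extension with $F/k$ not purely inseparable. Here I would invoke the factorization $k \subseteq K \subseteq F$ recalled just before the statement, where $K/k$ is separable and $F/K$ is purely inseparable. The crucial observation is that $K \neq k$: otherwise $F/k = F/K$ would itself be purely inseparable, contradicting the hypothesis on $F$. Thus $K/k$ is a proper finite separable extension, and by assumption $K$ is non-pythagorean.

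It then remains to descend from $K$ to $F$, for which I would apply \Cref{Diller-Dress} to the finite extension $F/K$. If $F$ were pythagorean, then \Cref{Diller-Dress} would force $K$ to be pythagorean, contradicting the previous step; therefore $F$ is non-pythagorean, completing the reverse direction. The only point requiring genuine care is the bookkeeping around purely inseparable extensions---specifically the verification that the separable part $K$ is nontrivial---so I expect that to be the main (and indeed the only) obstacle, rather than any deep argument.
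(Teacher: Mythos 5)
Your proposal is correct and follows essentially the same route as the paper: both decompose the given extension into its separable part over $k$ followed by a purely inseparable part, note that the separable part is proper (since the full extension is not purely inseparable), and then descend non-pythagoreanness to the top field via \Cref{Diller-Dress}. The only cosmetic difference is that you also spell out the trivial ``only if'' direction, which the paper leaves implicit.
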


\begin{proof}
Assume that every proper finite separable extension of $k$ is non-pythagorean. 
Let $L$ be a proper finite extension of $k$ that is not purely inseparable over $k$.
There is a subfield $F$ of $L$ satisfying $k \subsetneq F \subseteq L$ such that
$F/k$ is a proper separable extension and $L/F$ is a purely inseparable extension.
By hypothesis, $F$ is non-pythagorean. 
By \Cref{Diller-Dress}, $L$ is non-pythagorean. 
Thus $k$ is hereditarily non-pythagorean.
\end{proof}

\begin{proposition}\label{test}
Let $F$ be a field.  Assume that for every proper finite separable extension 
$K/F$ and for each chain of fields $F \subseteq \tilde{F} \subsetneq K$, that
$\sum K^2 \not\subset \widetilde{F} K^2$.

Let $E_0/F$ be a (possibly infinite) Galois extension and assume that $E_0$ is
pythagorean. Then $E_0$ is hereditarily non-pythagorean.
\end{proposition}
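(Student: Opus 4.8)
The plan is to reduce to a finite subextension of $E_0/F$ and then invoke the standing hypothesis on $F$. By \Cref{hnq} it suffices to prove that every proper finite separable extension $L/E_0$ is non-pythagorean, so I would fix such an $L$ and suppose, for contradiction, that $L$ is pythagorean, i.e.\ $\sum L^2 = L^2$. Note first that the hypothesis on $F$ forces $\cha F \neq 2$: in characteristic $2$ one has $\sum K^2 = K^2 \subseteq \widetilde F K^2$, so the assumption would fail. Thus we may work in characteristic different from $2$.

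Next I would descend $L$ to a finite level. Write $L = E_0(\alpha)$ by the primitive element theorem and let $m \in E_0[X]$ be the minimal polynomial of $\alpha$. Choose a finite subextension $F_1$ of $E_0/F$ with $m \in F_1[X]$ and set $K_1 = F_1(\alpha)$. Since $m$ remains irreducible over every field between $F_1$ and $E_0$, we get $[K_1 : F_1] = [L : E_0] = \deg m > 1$; hence $K_1/F$ is a proper finite separable extension and $F_1 \subsetneq K_1$. Applying the hypothesis with $K = K_1$ and $\widetilde F = F_1$ produces an element $\beta \in \sum K_1^2$ with $\beta \notin F_1 K_1^2$; in particular $\beta \notin K_1^2$. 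Because $\beta \in \sum K_1^2 \subseteq \sum L^2 = L^2$, there is $\gamma \in L$ with $\gamma^2 = \beta$.

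Now I would descend $\gamma$ as well: choose a finite subextension $F_2$ of $E_0/F$ with $F_1 \subseteq F_2$, Galois over $F$ (possible since $E_0/F$ is Galois), and $\gamma \in F_2(\alpha) =: K_2$. As before $[K_2 : F_2] = \deg m = [K_1 : F_1]$, so $K_1$ and $F_2$ are linearly disjoint over $F_1$, whence $K_1 \cap F_2 = F_1$ and restriction gives an isomorphism $\gal(K_2/K_1) \xrightarrow{\sim} \gal(F_2/F_1)$. The field $K_1(\sqrt{\beta}) = K_1(\gamma) \subseteq K_2$ is quadratic over $K_1$ (as $\beta \notin K_1^2$), so under the Galois correspondence it equals $M K_1$ for a quadratic subextension $M/F_1$ inside $F_2$. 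Writing $M = F_1(\sqrt{c})$ with $c \in F_1$, we have $\gamma = \sqrt{\beta} \in K_1(\sqrt{c})$; comparing coefficients in the $K_1$-basis $\{1, \sqrt{c}\}$ and using $\cha \neq 2$ forces $\beta = c v^2$ for some $v \in K_1$, that is, $\beta \in F_1 K_1^2$. This contradicts the choice of $\beta$, so $L$ is not pythagorean.

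The main obstacle is the two-step descent together with the Galois-theoretic identification of $K_1(\sqrt{\beta})$. One must carry out the degree computations that yield linear disjointness (so that $\gal(K_2/K_1) \cong \gal(F_2/F_1)$ and intermediate fields correspond), and then extract from the single fact ``$\beta$ becomes a square in $K_2 = K_1 F_2$'' the sharper conclusion that its square root already lives in a quadratic extension defined over $F_1$, forcing $\beta \in F_1 K_1^2$. Everything else — the reduction through \Cref{hnq} and the final coefficient comparison — is routine once the characteristic is known to differ from $2$.
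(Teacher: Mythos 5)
Your proof is correct and follows essentially the same route as the paper's: descend $L$ to a finite level $K_1=F_1(\alpha)$ via a primitive element and the coefficients of its minimal polynomial, apply the hypothesis there to obtain $\beta \in \sum K_1^2 \setminus F_1K_1^2$, and use linear disjointness together with the Galois correspondence to conclude that $\beta$ cannot become a square in $L$. The only (cosmetic) difference is that the paper applies the correspondence directly to the possibly infinite Galois extension $L/K_1 \cong E_0/F_1$, whereas you first descend the square root $\gamma$ to a second finite Galois level $F_2$ so that only finite Galois theory is needed; the final contradiction $\beta \in cK_1^2 \subseteq F_1K_1^2$ is identical to the paper's $\sigma \in \delta K^2 \subseteq \widetilde{F}K^2$.
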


\begin{proof}
Let $L$ be a proper finite separable extension of $E_0$.
Let $\beta \in L$ be a primitive element for $L/E_0$.  
The irreducible polynomial of $\beta$ over $E_0$ is defined over a finite extension 
$\widetilde{F}/F$ contained in $E_0$.  
Let $K=\widetilde{F}(\beta)$.  Then $\widetilde{F} \subsetneq K$.
Note that $L = E_0 K$ and that the fields $E_0$ and $K$ are linearily disjoint over 
$\widetilde{F}$. 
Since $E_0/\widetilde{F}$ is a Galois extension, it follows that $L/K$ is a Galois
extension.  We have an isomorphism $\mathop{Gal}(L/K) \to \mathop{Gal}(E_0/\widetilde{F})$ given by $\varphi \mapsto \varphi\vert_{E_0}$.

By hypothesis, there exists $\sigma \in \sum K^2$ with 
$\sigma \notin \widetilde{F}K^2$. In particular $\cha(F) \neq 2$.
We will show that $\sigma \notin L^2$. 
Suppose on the contrary that $K(\sqrt{\sigma}) \subseteq L$.
Then by Galois theory, there exists a quadratic extension 
$\widetilde{F}(\sqrt{\delta})/\widetilde{F}$ with $\delta \in \widetilde{F}$ 
such that $K(\sqrt{\delta}) = K(\sqrt{\sigma})$. 
This gives $\delta \sigma \in K^2$.
Then $\sigma \in \delta K^2 \subset \widetilde{F} K^2$,
which is a contradiction.  
Hence, $\sigma \in \sum K^2 \subseteq \sum L^2$, but $\sigma \notin L^2$.
Thus $L$ is not pythagorean and so $E_0$ is hereditarily non-pythagorean by \Cref{hnq}.
\end{proof}

\begin{proposition}\label{test2}
Let $F$ be a field of characteristic different from $2$, and assume that for each proper finite separable extension 
$K/F$ there is a nontrivial, nonreal discrete valuation $v$ on $F$ that splits completely in $K$.

Let $E_0/F$ be a (possibly infinite) Galois extension and assume that $E_0$ is pythagorean.
Then $E_0$ is hereditarily non-pythagorean.
\end{proposition}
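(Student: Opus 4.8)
The plan is to deduce this proposition from \Cref{test} by verifying its hypothesis: namely, that for every proper finite separable extension $K/F$ and every chain $F \subseteq \widetilde{F} \subsetneq K$ one has $\sum K^2 \not\subset \widetilde{F} K^2$. Once this is established, \Cref{test} applies verbatim (since $E_0/F$ is Galois and $E_0$ is pythagorean) and yields that $E_0$ is hereditarily non-pythagorean. So I would fix a proper finite separable extension $K/F$ and a chain $F \subseteq \widetilde{F} \subsetneq K$, and aim to produce an element of $(\sum K^2)\setminus \widetilde{F}K^2$. Note first that $K/\widetilde{F}$ is itself a proper finite separable extension, being an intermediate extension of the separable extension $K/F$ with $\widetilde{F} \subsetneq K$; hence \Cref{complete splitting} will be applicable to $K/\widetilde{F}$ as soon as I can exhibit a nontrivial nonreal discrete valuation on $\widetilde{F}$ that splits completely in $K$.

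By hypothesis there is a nontrivial nonreal discrete valuation $v$ on $F$ that splits completely in $K$, i.e. $v$ has exactly $[K:F]$ distinct extensions to $K$. The crux of the argument is that complete splitting is transitive in the tower $F \subseteq \widetilde{F} \subseteq K$. Writing $e_i, f_i$ for the ramification indices and residue degrees of the extensions of $v$ to $K$, the fundamental identity $\sum_i e_i f_i = [K:F]$ (valid here because $K/F$ is finite separable and $v$ is discrete, so no defect occurs) together with the existence of $[K:F]$ distinct extensions forces $e_i = f_i = 1$ for every $i$. Grouping these extensions by their restrictions to $\widetilde{F}$ and using multiplicativity of ramification indices and residue degrees in towers, I would conclude that $v$ has exactly $[\widetilde{F}:F]$ distinct extensions to $\widetilde{F}$, and that each such extension $w$ has in turn exactly $[K:\widetilde{F}]$ distinct extensions to $K$, all unramified with trivial residue extension.

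I would then pick any extension $w$ of $v$ to $\widetilde{F}$. Since its residue degree over $v$ is $1$, the residue field of $w$ equals that of $v$ and is therefore nonreal; moreover $w$ is a nontrivial discrete valuation that splits completely in $K$. Applying \Cref{complete splitting} to the proper finite extension $K/\widetilde{F}$ with the valuation $w$ produces $\sigma \in (\sum K^2)\setminus \widetilde{F}K^2$, which is precisely $\sum K^2 \not\subset \widetilde{F}K^2$. As $K$ and $\widetilde{F}$ were arbitrary, the hypothesis of \Cref{test} is satisfied and the conclusion follows. I expect the transitivity of complete splitting to be the only genuine content, the delicate point being to invoke the fundamental identity $\sum_i e_i f_i = [K:F]$ in its correct (defectless) form before deducing $e_i = f_i = 1$ and propagating complete splitting down to $\widetilde{F}$ and back up to $K$; the remainder is routine bookkeeping.
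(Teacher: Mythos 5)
Your proof is correct and takes essentially the same route as the paper: reduce to \Cref{test} by producing, for each chain $F \subseteq \widetilde{F} \subsetneq K$, a nontrivial nonreal discrete valuation on $\widetilde{F}$ that splits completely in $K$, and then invoke \Cref{complete splitting}. The only difference is that you spell out the transitivity-of-complete-splitting step (via the defectless fundamental identity $\sum_i e_i f_i = [K:F]$ for discrete valuations in finite separable extensions), a step the paper asserts in a single sentence without proof.
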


\begin{proof}
Let $K/F$ be a proper finite separable extension and let
$F \subseteq \widetilde{F} \subsetneq K$ be a chain of fields.
The hypothesis implies that there is a nontrivial, nonreal discrete valuation $v$ 
on $\widetilde{F}$ that splits completely in $K$.
\Cref{complete splitting} implies that 
$\sum K^2 \not\subset \widetilde{F} K^2$.
The result now follows from \Cref{test}.
\end{proof}

\section{Pythagorean and quadratic closures of number fields}

In this section we deal with the case of number fields. 
The key ingredient for the main result (\Cref{main theorem}) is based on the following result.

\begin{proposition}\label{global sums of squares}
Let $K/E$ be a proper finite extension of number fields. Then \\
$\sum K^2 \not\subset EK^2$.
\end{proposition}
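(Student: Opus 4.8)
The plan is to deduce everything from \Cref{complete splitting}: if I can exhibit a nontrivial, nonreal discrete valuation $v$ on $E$ that splits completely in $K$, then \Cref{complete splitting} immediately produces $\sigma \in \left(\sum K^2\right) \setminus EK^2$, which is precisely the assertion $\sum K^2 \not\subset EK^2$. So the whole problem reduces to manufacturing such a valuation on $E$.

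For number fields the natural candidates are the $\mathfrak{p}$-adic valuations $v_\mathfrak{p}$ attached to the finite primes $\mathfrak{p}$ of the ring of integers $\mathcal{O}_E$. Each such $v_\mathfrak{p}$ is a nontrivial discrete valuation, and its residue field $\mathcal{O}_E/\mathfrak{p}$ is a finite field. Since every finite field is nonreal (if the characteristic is odd, then $-1$ is a sum of two squares, and if the characteristic is $2$, then $-1$ is already a square), each $v_\mathfrak{p}$ is automatically a nonreal valuation. Thus the only remaining requirement is that $\mathfrak{p}$ split completely in $K$, i.e.\ that $v_\mathfrak{p}$ extend to exactly $n = [K:E]$ distinct valuations on $K$.

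To arrange complete splitting I would pass to the Galois closure $\widehat{K}$ of $K/E$. For an unramified prime $\mathfrak{p}$, the splitting type of $\mathfrak{p}$ in $K$ is governed by the action of a Frobenius element on the coset space $\gal(\widehat{K}/E)/\gal(\widehat{K}/K)$, and since $\widehat{K}$ is the Galois closure, the subgroup $\gal(\widehat{K}/K)$ contains no nontrivial normal subgroup of $\gal(\widehat{K}/E)$. A short computation then shows that $\mathfrak{p}$ splits completely in $K$ if and only if it splits completely in $\widehat{K}$, i.e.\ if and only if its Frobenius is trivial. The Chebotarev density theorem, applied to the trivial conjugacy class of $\gal(\widehat{K}/E)$, guarantees infinitely many such primes, of density $1/[\widehat{K}:E] > 0$; discarding the finitely many ramified primes, I may choose one unramified prime $\mathfrak{p}$ that splits completely in $\widehat{K}$, hence in $K$. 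Feeding $v_\mathfrak{p}$ into \Cref{complete splitting} finishes the argument.

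The main obstacle is the existence step, namely producing a prime of $E$ that splits completely in the proper extension $K$; this is where the global nature of number fields is essential and where Chebotarev (or at least the positive density of completely split primes in a Galois extension) does the real work. Once such a prime is in hand, the remaining verifications --- that finite residue fields are nonreal and that complete splitting is the correct hypothesis --- are routine, being already packaged in \Cref{nonreal valuation}, \Cref{several nonreal valuations}, and \Cref{complete splitting}.
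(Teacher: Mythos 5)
Your proof is correct, but it takes a genuinely different route from the paper --- indeed, it is precisely the kind of argument the paper's own alternative proof was designed to avoid. Your steps are all sound: the $\mathfrak{p}$-adic valuations of a number field are discrete with finite, hence nonreal, residue fields; a prime splits completely in $K$ if and only if it splits completely in the Galois closure $\widehat{K}$ (since $\gal(\widehat{K}/K)$ has trivial normal core in $\gal(\widehat{K}/E)$); Chebotarev for the trivial conjugacy class supplies infinitely many such primes; and \Cref{complete splitting} then produces $\sigma \in \left(\sum K^2\right) \setminus EK^2$. The paper argues differently, in two ways. Its quoted proof deduces the statement from Lam's theorem that $K^{\times}/E^{\times}(K^{\times})^2$ is infinite, combined with the observation that $K^{\times}/(\sum K^2)^{\times}$ is finite (bounded by $2$ to the number of orderings of $K$), so that $\sum K^2 \subset EK^2$ is impossible. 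Its alternative proof deliberately avoids any existence theorem for split primes: it fixes a \emph{dyadic} prime $\mathfrak{p}$ of $E$; if $\mathfrak{p}$ happens to split completely it invokes \Cref{complete splitting} just as you do, and otherwise it uses the dyadic square-class count $\left\vert E_{\mathfrak{p}}^{\times}/(E_{\mathfrak{p}}^{\times})^2\right\vert = 2^{m+2}$ (\Cref{dyadic square classes}) to find a local element outside $E_{\mathfrak{p}}^{\times}(K_{\mathcal{P}}^{\times})^2$, then approximates it by a global sum of squares, using that $E_{\mathfrak{p}}^{\times}(K_{\mathcal{P}}^{\times})^2$ is closed in the $\mathcal{P}$-adic topology. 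The trade-off: your argument is shorter and reuses the paper's valuation lemma cleanly, but it imports Chebotarev, a heavy global/analytic input; the paper's alternative is purely local-algebraic, needing only Hensel-type square-class counting. Note also that, by the remark following \Cref{complete splitting}, two distinct extensions of $v$ to $K$ already suffice, so you could get by with primes that split only partially --- a much weaker fact than Chebotarev, provable by elementary means (compare the polynomial-value argument the paper uses for function fields in \Cref{valuation splitting}).
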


This result follows easily from \cite[Chapter VII, Theorem 7.12]{La-05}, 
which states (with an easily corrected typo) that 
$K^{\times}/E^{\times} (K^{\times})^2$ is an infinite group. 
Since $K$ has finitely many orderings $P_1, \ldots , P_n$ and
$P_1\cap \cdots \cap P_n =\sum K^2$, it follows that  
$K^\times/(\sum K^2)^\times$ injects into 
$(K^\times /P^\times_1) \times \cdots \times (K^\times/P^\times_n) $, and therefore $|K^\times/(\sum K^2)^\times | \le 2^n$. 
If $\sum K^2 \subset EK^2$,  then
$|K^{\times}/E^{\times} (K^{\times})^2| \le |K^{\times}/(\sum K^2)^{\times}| \le 2^n$, a contradiction.  Thus $\sum K^2 \not\subset EK^2$.

The proof of \cite[Chapter VII, Theorem 7.12]{La-05} is based on the number theoretic result that in a finite extension of number fields there exist infinitely many prime ideals that do not remain prime in the extension field.  We use this opportunity to give an alternative proof of \Cref{global sums of squares} which reduces to the local dyadic case, and where no result on the existence of splitting primes is needed.  Instead we require only the following result.

\begin{lemma}\label{dyadic square classes}
Let $E/\mathbb{Q}_2$ be a finite field extension and $K/E$ a proper finite extension.  Then $ E^\times (K^{\times})^2 \neq K^\times$.
\end{lemma}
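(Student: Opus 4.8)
The plan is to treat this as a finite cardinality comparison among square-class groups. Since $E$ is a finite extension of $\mathbb{Q}_2$ and $K/E$ is finite, both $E$ and $K$ are finite extensions of $\mathbb{Q}_2$, i.e.\ dyadic local fields. The assertion $E^\times (K^\times)^2 \neq K^\times$ is equivalent to the statement that the image of the natural map $E^\times \to K^\times/(K^\times)^2$ is a \emph{proper} subgroup of $K^\times/(K^\times)^2$. I would therefore compare the order of this image with the order of the full square-class group of $K$.

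The key input is the standard computation of the order of the square-class group of a dyadic local field: for any finite extension $L/\mathbb{Q}_2$ one has
\[
|L^\times/(L^\times)^2| = 2^{[L:\mathbb{Q}_2]+2}.
\]
This follows from the structure of the unit group $\mathcal{O}_L^\times \cong \mu_{q-1}\times(1+\mathfrak{m}_L)$, where $q-1$ is odd and $1+\mathfrak{m}_L$ is, as a $\mathbb{Z}_2$-module, isomorphic to $\mathbb{Z}_2^{[L:\mathbb{Q}_2]}$ times its finite $2$-power torsion $\mu_{2^\infty}(L)$; squaring kills the odd part $\mu_{q-1}$, contributes $(\mathbb{Z}/2)^{[L:\mathbb{Q}_2]}$ from the free part, one factor $\mathbb{Z}/2$ from the (nontrivial, since $-1\in L$) torsion, and one more from the value group $\mathbb{Z}/2\mathbb{Z}$. (See \cite[Chapter VI]{La-05}; equivalently this is the instance $n=2$, $p=2$ of the general local formula $|L^\times/(L^\times)^n| = n\,|\mu_n(L)|\,/\,|n|_L$.)

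Now the image of $E^\times$ in $K^\times/(K^\times)^2$ is a quotient of $E^\times/(E^\times)^2$, so
\[
|E^\times (K^\times)^2/(K^\times)^2| \le |E^\times/(E^\times)^2| = 2^{[E:\mathbb{Q}_2]+2}.
\]
Because $K/E$ is a proper extension, $[K:E]\ge 2$ and hence $[E:\mathbb{Q}_2] < [K:\mathbb{Q}_2]$, giving
\[
2^{[E:\mathbb{Q}_2]+2} < 2^{[K:\mathbb{Q}_2]+2} = |K^\times/(K^\times)^2|.
\]
Thus $E^\times(K^\times)^2/(K^\times)^2$ is a proper subgroup, so $E^\times(K^\times)^2\neq K^\times$, as desired.

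The only nontrivial ingredient is the square-class count, which is classical; everything else is elementary group theory. The one point to be careful about is that the bound on the image is only an inequality, since the kernel $E^\times\cap(K^\times)^2$ may be strictly larger than $(E^\times)^2$; but as we only need the upper bound $2^{[E:\mathbb{Q}_2]+2}$, this causes no difficulty.
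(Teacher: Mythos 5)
Your proof is correct and takes essentially the same approach as the paper's: both arguments reduce the claim to a cardinality comparison, bounding the image of $E^\times$ in $K^\times/(K^\times)^2$ by $\left\vert E^\times/(E^\times)^2\right\vert = 2^{[E:\mathbb{Q}_2]+2}$ and noting this is strictly smaller than $\left\vert K^\times/(K^\times)^2\right\vert = 2^{[K:\mathbb{Q}_2]+2}$, using exactly the square-class count that the paper cites as \cite[Chapter VI, Corollary 2.23]{La-05}. The only cosmetic difference is that you sketch the derivation of that count from the unit-group structure rather than citing it outright.
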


\begin{proof}
Let $[E:\mathbb{Q}_2] = m$ and $[K:E] = n >1$, and thus 
$[K:\mathbb{Q}_2]= mn$.
By \cite[Chapter VI, Corollary 2.23]{La-05}, we have 
\[\left\vert E^\times /(E^{\times})^2  \right\vert = 2^{m+2} < 2^{mn + 2} 
= \left\vert K^\times / (K^{\times})^2  \right\vert.\]
This gives 
\[\left\vert E^\times (K^{\times})^2 / (K^{\times})^2 \right\vert 
= \left\vert E^{\times} / ( E^\times \cap (K^{\times})^2 )  \right\vert 
\leq \left\vert E^\times / (E^{\times})^2 \right\vert 
< \left\vert K^\times / (K^{\times})^2  \right\vert.\] \end{proof}

\begin{proof}[Alternative proof of \Cref{global sums of squares}:]
Let $K/E$ be a proper finite extension of number fields. 
Let $\mathfrak{p}$ be a dyadic prime ideal in the ring of integers of $E$.
Let $E_\mathfrak{p}$ be the completion of $E$ with respect to $\mathfrak{p}$. 
If the hypotheses of \Cref{complete splitting} hold, then 
$\sum K^2 \not\subset E K^2$.
If the hypotheses of \Cref{complete splitting} do not hold, then there exists 
a dyadic prime ideal $\mathcal{P}$ in the ring of integers $K$ such that 
$\mathfrak{p}=E \cap \mathcal{P}$ and 
$[ K_{\mathcal{P}}:E_{\mathfrak{p}}] > 1$, where $K_\mathcal{P}$ the completion of $K$ with respect to $\mathcal{P}$.
Then \Cref{dyadic square classes} implies that there exists
$\sigma_{\mathcal{P}} \in K_{\mathcal{P}}^{\times}\backslash 
E_{\mathfrak{p}}^{\times}(K_{\mathcal{P}}^{\times})^2$.

Since $K_{\mathcal{P}}$ is nonreal, we have  
$K_\mathcal{P}^\times = (\sum K_\mathcal{P}^2)^\times$. Thus
$\sigma_\mathcal{P} \in  (\sum K_\mathcal{P}^2)^\times \backslash 
E^\times_\mathfrak{p} (K_\mathcal{P}^{\times})^2$. 
Let $\sigma_\mathcal{P} = x_{\mathcal{P},1}^2 + \ldots + x_{\mathcal{P},m}^2$ for some $m \in \mathbb{N}$ and $x_{\mathcal{P},i} \in K_\mathcal{P}$.
For every $i \in \{1,\ldots,m\}$ let 
$\left(x^{(\ell)}_{\mathcal{P},i}\right)_{\ell \in \mathbb{N}}$ 
be a sequence in $K$ that converges to $x_{\mathcal{P},i}$ in the 
$\mathcal{P}$-adic metric.  
Let $\sigma^{(\ell)} = {x^{(\ell)\,\,2}_{\mathcal{P},1}} + \cdots + {x^{(\ell)\,\,2}_{\mathcal{P},m}}$. Then $\left(\sigma^{(\ell)}\right)_{\ell \in \mathbb{N}}$ 
is a sequence in $\sum K^2$ that converges to $\sigma_\mathcal{P}$ in the 
$\mathcal{P}$-adic metric.  
Since $\sigma_{\mathcal{P}} \ne 0$, for $\ell$ sufficiently large, 
$\left(\sigma^{(\ell)}\right)_{\ell \in \mathbb{N}}$ is a sequence in 
$\left(\sum K^2\right)^\times$. 
We finish the proof by showing that for $\ell$ sufficiently large, $\sigma^{(\ell)}$  is not contained $E_\mathfrak{p}^\times K_\mathcal{P}^{\times 2}$, which contains $E^\times K^{\times 2}$ as a subset.  This will follow easily after we show that
$E_\mathfrak{p}^\times K_\mathcal{P}^{\times 2}$ is a closed subset of 
$K_\mathcal{P}^\times$ in the $\mathcal{P}$-adic metric. 

So, let $(z_n)_{n \in \mathbb{N}}$ be a sequence in 
$E_\mathfrak{p}^\times K_\mathcal{P}^{\times 2}$ that converges to some 
$z \in K_\mathcal{P}^\times$. 
For every $n \in \mathbb{N}$ there exist $e_n \in E_\mathfrak{p}^\times$ and $x_n \in K_\mathcal{P}^{\times}$ such that $z_n= e_n x^2_n$.
After possibly multiplying $e_n$ with a square in $E_\mathfrak{p}^{\times 2}$ and $x^2_n$ with its inverse, if necessary, we can arrange that the sequence 
$(e_n)_{n\in \mathbb{N}}$ is bounded in the $\mathcal{P}$-adic metric, and since $(z_n)_{n\in \mathbb{N}}$ converges to a non-zero element and hence is also bounded,  we conclude that $x_n$ is also a bounded sequence. 
Since closed balls in the $\mathcal{P}$-adic metric are compact, every sequence in a closed ball has a convergent subsequence.  Then there exist subsequences 
$(e_{n_k})_{k\in \mathbb{N}}$ and $(x_{n_k})_{k\in \mathbb{N}}$ that converge to some element $e\in E_\mathfrak{p}$ and $x \in K_\mathcal{P}$, and thus $z_{n_k} =e_{n_k} x^2_{n_k}$ converges to $ex^2$ for $k \to \infty$. It follows that $z= e x^2$, and thus 
$z \in E_\mathfrak{p}^\times K_\mathcal{P}^{\times 2}$, showing that 
$E_\mathfrak{p}^\times K_\mathcal{P}^{\times 2}$ is a closed subset of 
$K_\mathcal{P}^{\times}$. \end{proof}

\begin{lemma}\label{compositum with E_0}
Let $K$ be a number field and let $E_0$ be an infinite dimensional number field 
that is pythagorean.
Let $\mathfrak{p}$ be a prime ideal of $K$ and $K_\mathfrak{p}$ the 
$\mathfrak{p}$-adic completion of $K$. Let $E_0 K_\mathfrak{p}$ be a 
compositum in some algebraic closure of $K_\mathfrak{p}$. 
Then $E_0 K_\mathfrak{p}$ is an infinite algebraic extension of
$K_\mathfrak{p}$.
\end{lemma}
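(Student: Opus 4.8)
The plan is to argue by contradiction, reducing the statement to the single rational prime $p$ lying under $\mathfrak{p}$ and then exploiting that in a pythagorean field every sum of squares is a square, which is incompatible with a finite (hence discretely valued) local completion. First I fix an embedding of a fixed algebraic closure of $\mathbb{Q}$ into a fixed algebraic closure of $K_\mathfrak{p}$, so that $E_0$ and all the composita live inside $\overline{K_\mathfrak{p}}$, and I let $p$ be the residue characteristic of $K_\mathfrak{p}$. Since $\mathbb{Q}_p \subseteq K_\mathfrak{p}$ with $[K_\mathfrak{p}:\mathbb{Q}_p] < \infty$, it suffices to prove that $E_0\mathbb{Q}_p/\mathbb{Q}_p$ is infinite: if it is, then $E_0 K_\mathfrak{p} \supseteq E_0\mathbb{Q}_p$ is already infinite over $\mathbb{Q}_p$, and hence over $K_\mathfrak{p}$.

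So I suppose, for contradiction, that $M := E_0\mathbb{Q}_p$ is finite over $\mathbb{Q}_p$. Then $M$ is a local field of characteristic $0$: it is complete, its normalized valuation $w$ is discrete with value group $\mathbb{Z}$, and its residue field is finite, hence nonreal. Two consequences of finiteness will be used together. First, because $M$ is complete and contains $\mathbb{Q} \subseteq E_0$, the closure of $E_0$ in $M$ is a complete subfield containing both $E_0$ and the closure $\mathbb{Q}_p$ of $\mathbb{Q}$, hence all of $M = E_0\mathbb{Q}_p$; that is, $E_0$ is dense in $M$. Second, since $w$ is a nonreal discrete valuation with value group $\mathbb{Z}$ on a field of characteristic $\neq 2$, \Cref{nonreal valuation} supplies an element $a \in \sum M^2$ with $w(a) = 1$.

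Now I combine density with the pythagorean hypothesis. Writing $a = x_1^2 + \cdots + x_m^2$ with $x_i \in M$ and choosing $y_i \in E_0$ with $w(y_i - x_i)$ large enough, the ultrametric inequality forces $w(y_1^2 + \cdots + y_m^2) = w(a) = 1$. But $y_1^2 + \cdots + y_m^2 \in \sum E_0^2 = E_0^2$, as $E_0$ is pythagorean, so $y_1^2 + \cdots + y_m^2 = z^2$ for some $z \in E_0$. Then $2w(z) = w(z^2) = 1$, so $w(z) = 1/2$. This is the contradiction: $z \in E_0 \subseteq M$, yet $w(z) = 1/2 \notin \mathbb{Z} = w(M^\times)$. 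Therefore $E_0\mathbb{Q}_p/\mathbb{Q}_p$ is infinite, and the lemma follows.

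I expect the only genuine subtlety to be the reduction together with the density claim: one must descend from $K_\mathfrak{p}$ to $\mathbb{Q}_p$ precisely so that $E_0$ (which contains $\mathbb{Q}$ but need not contain $K$) becomes dense in the completion, since density is exactly what converts a sum of squares in $M$ into a sum of squares in $E_0$. Everything else—the production of an element of $\sum M^2$ of minimal positive valuation and the resulting conflict with discreteness—is then immediate from \Cref{nonreal valuation} and the pythagorean property. I note that the hypothesis that $E_0$ be infinite dimensional is not actually needed in this argument; it is automatic, since a pythagorean algebraic extension of $\mathbb{Q}$ contains $\sqrt{n}$ for every positive integer $n$ and is therefore infinite.
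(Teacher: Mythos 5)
Your proof is correct, and it rests on the same two pillars as the paper's: the reduction to showing $[E_0\mathbb{Q}_p:\mathbb{Q}_p]=\infty$ (from which $[E_0K_\mathfrak{p}:K_\mathfrak{p}]=\infty$ follows since $[K_\mathfrak{p}:\mathbb{Q}_p]<\infty$), and \Cref{nonreal valuation} played against pythagoreanity. The execution, however, differs. The paper never passes to a completion or invokes density: it applies \Cref{nonreal valuation} inside $E_0$, to number fields $F\subseteq E_0$ containing a given element (their residue fields at $p$-adic places being finite, hence nonreal), and concludes that the value group of any extension of the $p$-adic valuation to $E_0$ is $2$-divisible --- the element $a\in\sum F^2$ of valuation $1$ is a square in the pythagorean field $E_0$, so every value is twice a value. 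A nontrivial $2$-divisible value group cannot sit inside the discrete value group of a finite extension of $\mathbb{Q}_p$, and the conclusion is immediate. You instead argue by contradiction: you assume $M=E_0\mathbb{Q}_p$ is finite over $\mathbb{Q}_p$, apply \Cref{nonreal valuation} to the resulting local field $M$ itself, and then need two extra (correctly justified) steps --- that $E_0$ is dense in $M$, and the ultrametric approximation replacing a sum of squares in $M$ of valuation $1$ by a sum of squares in $E_0$ of valuation $1$ --- before pythagoreanity produces the half-integer valuation $w(z)=1/2$. So the underlying tension exploited is identical (pythagorean forces $2$-divisible values, finiteness forces discreteness); the paper's version buys brevity by working with value groups of $E_0$ directly, while yours localizes the whole argument in the completion at the cost of the density step. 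Your closing remark that the infinite-dimensionality hypothesis is redundant is also correct: a pythagorean algebraic extension of $\mathbb{Q}$ contains $\sqrt{n}$ for every positive integer $n$, hence is automatically infinite over $\mathbb{Q}$.
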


\begin{proof}
Let $p = \mathbb{Q} \cap \mathfrak{p}$.
Consider the following two chains of fields.
\[\mathbb{Q}_p \subseteq E_0 \mathbb{Q}_p \subseteq E_0 K_{\mathfrak{p}}\]
\[\mathbb{Q}_p \subseteq K_{\mathfrak{p}} \subseteq E_0 K_{\mathfrak{p}}\]

We can use \Cref{nonreal valuation} to show that the
value group of any extension of the $p$-adic valuation to $E_0$ is 
$2$-divisible, but the extension of the $p$-adic valuation to $K_{\mathfrak{p}}$ 
is discrete.  
Therefore $[E_0 \mathbb{Q}_p:  \mathbb{Q}_p] = \infty$ and 
$[K_{\mathfrak{p}} : \mathbb{Q}_p]$ is finite.
It follows that $ [E_0 K_{\mathfrak{p}}:K_{\mathfrak{p}} ]= \infty$.
\end{proof}

\medskip

We now prove the main result of this section.

\begin{theorem}\label{main theorem}
Let $L$ be an algebraic extension of $\mathbb{Q}$ and assume that $L$ contains a pythagorean subfield $E_0$.
Then $p(L) \leq 2$.  
If moreover $1 < [L:E_0] < \infty$  and $E_0$ is a Galois extension of some number field, then $p(L) = 2$.  In particular, $E_0$ is hereditarily non-pythagorean.
\end{theorem}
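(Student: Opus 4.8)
The plan is to prove the three assertions in sequence, since the later ones build on the earlier. First I would establish $p(L) \leq 2$. Because $E_0 \subseteq L$ is pythagorean and nonreal fields of characteristic different from $2$ satisfy $K = \sum K^2$, I would split into cases. If $E_0$ is nonreal, then $E_0$ is quadratically closed, and I would argue that every element of $L$ is already a square plus a square (indeed every sum of squares in $L$ lies in a subfield $E_0(\al)$ of degree at most $2$, where a two-square identity suffices), giving $p(L) \leq 2$. If $E_0$ is real pythagorean, I would use the fact that any finite sum of squares $\si_1^2 + \cdots + \si_r^2$ with $\si_i \in L$ generates a finite extension of $E_0$, and that over the pythagorean base $E_0$ one can successively combine squares two at a time: the point is that if $a \in E_0$ is a sum of two squares then $a \in E_0^2$, so the $E_0$-coefficients that appear can be collapsed. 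The cleanest route is to reduce to a finite subextension $E_0 \subseteq E_0(\be) \subseteq L$, write any sum of squares over a power basis, and exploit pythagoreanity of $E_0$ to show two squares always suffice.

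Next, under the hypothesis $1 < [L : E_0] < \infty$ with $E_0$ Galois over a number field, I would show $p(L) = 2$, i.e. that $p(L) > 1$, meaning $L$ is \emph{not} pythagorean. This is where \Cref{main theorem II}-style machinery and \Cref{test2} enter. Since $E_0$ is a Galois extension of some number field and $E_0$ is pythagorean, \Cref{test2} (via \Cref{global sums of squares} supplying, for each proper finite separable extension $K/F$ of the number field base $F$, a nonreal dyadic valuation that can be arranged to split completely, or its consequence $\sum K^2 \not\subset \widetilde F K^2$) shows that $E_0$ is hereditarily non-pythagorean. Concretely, $L$ is a proper finite extension of the pythagorean field $E_0$ that is not purely inseparable (as $\cha = 0$), so by the definition of hereditarily non-pythagorean, $L$ is non-pythagorean, whence $p(L) \geq 2$. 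Combined with $p(L) \leq 2$ from the first part, this forces $p(L) = 2$.

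Finally, the assertion that $E_0$ is hereditarily non-pythagorean is really the engine behind the second part, so I would state it as the substantive conclusion: apply \Cref{test2} with $F$ the number field over which $E_0$ is Galois. The hypothesis of \Cref{test2} demands that for each proper finite separable extension $K/F$ there is a nontrivial nonreal discrete valuation on $F$ splitting completely in $K$; here the dyadic valuations furnished by the number-field structure, together with \Cref{complete splitting} and \Cref{dyadic square classes} as packaged in \Cref{global sums of squares}, provide exactly the needed input, and \Cref{compositum with E_0} handles the passage to the infinite-dimensional pythagorean field $E_0$ by guaranteeing that localizations remain infinite and hence that the relevant square-class obstruction persists. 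I expect the main obstacle to be the first part: verifying $p(L) \leq 2$ uniformly across both the real and nonreal cases for $E_0$, since the real pythagorean case requires care in combining an arbitrary finite sum of squares down to two squares using only the pythagorean property of the base field $E_0$ together with the Pfister-type two-square behavior inherited in the extension $L/E_0$.
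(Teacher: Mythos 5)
Your treatment of the second and third assertions has the right logical shape and is close to the paper's: once $p(L)\leq 2$ is known, one feeds \Cref{global sums of squares} into the Galois-descent machinery to conclude that $E_0$ is hereditarily non-pythagorean, so that $L$ is not pythagorean and $p(L)=2$. (Two caveats: the paper uses \Cref{test} directly, since \Cref{global sums of squares} already yields $\sum K^2 \not\subset \widetilde F K^2$; your route through \Cref{test2} would require a nonreal valuation that splits completely in $K$, and your claim that a dyadic valuation ``can be arranged to split completely'' is false in general and proved nowhere in the paper --- the whole point of the paper's alternative proof of \Cref{global sums of squares} is to avoid any splitting-prime input. Also, \Cref{compositum with E_0} plays no role in this part; its role is in the first assertion.) The genuine gap is in the first assertion, $p(L)\leq 2$, which is where the real work of the theorem lies. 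In the case that $E_0$ is real pythagorean, your plan is to write a sum of squares over a power basis of a finite subextension and ``combine squares two at a time'' using pythagoreanity of $E_0$. This cannot work: collapsing $\alpha_1^2+\alpha_2^2$ with $\alpha_i\in L\setminus E_0$ into a single square requires that sums of two squares in $L$ be squares, i.e.\ pythagoreanity of the \emph{top} field, which is precisely what the second assertion of the theorem denies; and pythagoreanity of the coefficient field $E_0$ gives no control over the cross terms that appear when one squares elements expanded in a power basis. (Your nonreal case is fine in substance, though for the wrong reason: there is no need for sums of squares to live in a quadratic subextension; one simply has $i=\sqrt{-1}\in E_0\subseteq L$, whence $\sigma = ((\sigma+1)/2)^2+(i(\sigma-1)/2)^2$ for every $\sigma\in L$.)

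What is actually needed --- and what the paper does --- is an arithmetic local-global argument. Given $\sigma\in(\sum L^2)^\times$, put the data into a number field $K$ with $\sigma\in\sum K^2$ and consider the totally indefinite ternary form $q=\langle 1,1,-\sigma\rangle$. At nondyadic primes $\mathfrak p$ of $K$ with $v_{\mathfrak p}(\sigma)=0$, Springer's theorem gives local isotropy. At the finitely many remaining primes, one uses $\mathbb{Q}_{\mathrm{pyth}}\subseteq E_0$ and \Cref{compositum with E_0} (the value group of any extension of a $p$-adic valuation to $\mathbb{Q}_{\mathrm{pyth}}$ is $2$-divisible, so $K_{\mathfrak p}\mathbb{Q}_{\mathrm{pyth}}$ is an infinite extension of $K_{\mathfrak p}$) to choose a finite Galois $2$-extension $\widetilde F/\mathbb{Q}$ inside $\mathbb{Q}_{\mathrm{pyth}}$ that is simultaneously a proper local $2$-extension at all bad primes; then $q$ becomes isotropic at those completions by \cite[Chapter VI, Lemma 2.14]{La-05}. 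Hasse--Minkowski over the number field $K\widetilde F\subseteq L$ then gives isotropy of $q$ over $L$, i.e.\ $\sigma\in L^2+L^2$. Without an argument of this kind, the bound $p(L)\leq 2$ --- and hence your conclusion $p(L)=2$ --- is unsupported.
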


\begin{proof} 
We prove first that $\sum L^2 = L^2 + L^2$.
Let $\sigma \in (\sum L^2)^{\times}$.  
Write $\sigma = \sum_{i=1}^m \alpha_i^2$, where each $\alpha_i \in L$.
 Let $K= \mathbb{Q}(\alpha_1, \ldots, \alpha_m)$. 
Any field composite $KE_0$ is an infinite field extension of $K$ contained in $L$, and $\sigma \in \sum K^2$.
We will show that there exists a finite Galois extension $\tilde{F}$ of $\mathbb{Q}$ 
contained in $E_0$ such that the quadratic form $q=\langle 1, 1, -\sigma \rangle$
is isotropic over $K\widetilde{F}$. 
This quadratic form is totally indefinite over $K\widetilde{F}$ for every such extension $\widetilde{F}/\mathbb{Q}$ because $q$ is totally indefinite over $K$. 
There are only finitely many prime ideals $\mathfrak{p}$ in $K$ such that 
$v_\mathfrak{p}(\sigma)\neq 0$.
Let $\mathcal{S}$ denote the set of prime ideals $\mathfrak{p}$ in the ring of integers of $K$ such that $v_\mathfrak{p}(\sigma)\neq 0$ together with the finitely many prime ideals containing $2$ (the dyadic primes).
By Springer's theorem, we have for any prime ideal 
$\mathfrak{p} \notin \mathcal{S}$ that $q$ is isotropic over the 
$\mathfrak{p}$-adic completion $K_\mathfrak{p}$, and the same is then true for 
$(K\widetilde{F})_\mathcal{P}$ for any finite Galois extension $\widetilde{F}/\mathbb{Q}$ and for any prime ideal $\mathcal{P}$ of $K\widetilde{F}$ extending $\mathfrak{p}$. 
Now let $\mathfrak{p} \in \mathcal{S}$. 
Since $\mathbb{Q}_\mathrm{pyth} \subseteq E_0$, we have by \Cref{compositum with E_0}, that a compositum 
$K_\mathfrak{p}\mathbb{Q}_\mathrm{pyth}$ is an infinite algebraic extension of $K_\mathfrak{p}$. 
There exists a finite Galois $2$-extension $\widetilde{F}/\mathbb{Q}$ with $\widetilde{F} \subseteq \mathbb{Q}_\mathrm{pyth}$ such that 
$K_\mathfrak{p}\widetilde{F}$ is a proper finite $2$-extension of $K_\mathfrak{p}$. Since $\mathcal{S}$ contains only finitely many prime ideals, one can find 
$\widetilde{F}$ such that this is the case for all primes 
$\mathfrak{p} \in \mathcal{S}$. 
For any prime $\mathcal{P}$ of $K\widetilde{F}$ lying over some prime 
$\mathfrak{p}$, we have that $(K\widetilde{F})_\mathcal{P}$ is 
$K_\mathfrak{p}$-isomorphic to $K_\mathfrak{p}\widetilde{F}$. 
If $\mathfrak{p} \notin S$, we already have that $q$ is isotropic over 
$K_\mathfrak{p}\widetilde{F}$. 
If $\mathfrak{p} \in S$ then
$(K\widetilde{F})_\mathcal{P}$ is a proper finite $2$-extension over 
$K_\mathfrak{p}$.  Then $q$ is isotropic over $(K\widetilde{F})_\mathcal{P}$ by
\cite[Chapter VI, Lemma 2.14]{La-05}. 
The Hasse Minkowski theorem implies that $q$ is isotropic over $K\widetilde{F}$, and hence over $L$. Thus $\sigma \in L^2 + L^2$.

Let us now assume that $1<[L:E_0]< \infty$ and  $E_0/F$ is a Galois extension for some (finite) number field $F$.  
Let $F \subseteq \tilde{F} \subsetneq K$ be a chain of fields with $[K:F] < \infty$.
By \Cref{global sums of squares}, 
$\sum K^2 \not\subset \widetilde{F} K^2$.
It now follows from \Cref{test} that $L^2 \subsetneq L^2 + L^2$.
\end{proof}

\noindent
Remark: For a field $E_0$ that is a quadratically closed and Galois over some number field, 
the conclusion in \Cref{main theorem} that $E_0$ is
hereditarily non-quadratically closed was already shown in \cite[Chapter VII, Corollary 7.11]{La-05}.

\smallskip

\noindent 
Note that if $L$ is an algebraic extension of $\mathbb{Q}$ such that $p(L)=2$, then, under the hypothesis that $L$ contains a pythagorean subfield, \Cref{main theorem} implies that $p(M)=2$ for all finite extensions $M/L$.
It turns out that the hypothesis of $L$ containing a pythagorean subfield is unnecessary. We will show this, using different techniques, in an upcoming paper.





\begin{corollary}\label{corollary of number fields}
Let $F$ be a number field.  Then $F_\mathrm{pyth}$ is hereditarily 
non-pythagorean and $F_\mathrm{quad}$ is hereditarily non-quadratically closed.
\end{corollary}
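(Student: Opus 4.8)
The plan is to deduce both assertions from \Cref{main theorem}. The essential observation is that \emph{a pythagorean field which is Galois over some number field is hereditarily non-pythagorean}: this is exactly the ``in particular'' clause of \Cref{main theorem}, whose only standing hypotheses on $E_0$ are that it be pythagorean and Galois over a number field (the finite extensions $L/E_0$ one must test are then automatically algebraic over $\mathbb{Q}$, since $F$ is). Thus for the first assertion it suffices to take $E_0 = F_\mathrm{pyth}$ and to check that $F_\mathrm{pyth}/F$ is a Galois extension, as $F_\mathrm{pyth}$ is pythagorean by construction and $F$ is a number field.

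To see that $F_\mathrm{pyth}/F$ is Galois I would argue normality directly, separability being automatic in characteristic $0$. Writing $F_\mathrm{pyth}$ as the union of the tower $F = F_0 \subseteq F_1 \subseteq \cdots$, where $F_{n+1}$ is generated over $F_n$ by the square roots of the sums of two squares of $F_n$, I would show by induction on $n$ that $\sigma(F_n) \subseteq F_\mathrm{pyth}$ for every $\sigma \in \gal(\bar{F}/F)$. The inductive step uses that for a sum of two squares $s = a^2 + b^2$ with $a,b \in F_n$ one has $\sigma(s) = \sigma(a)^2 + \sigma(b)^2$, which by the induction hypothesis is a sum of two squares in $F_\mathrm{pyth}$ and hence, $F_\mathrm{pyth}$ being pythagorean, already a square there; therefore $\sigma(\sqrt{s}) = \pm\sqrt{\sigma(s)} \in F_\mathrm{pyth}$. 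Applying the same reasoning to $\sigma^{-1}$ gives $\sigma(F_\mathrm{pyth}) = F_\mathrm{pyth}$, so $F_\mathrm{pyth}/F$ is normal, and \Cref{main theorem} then yields that $F_\mathrm{pyth}$ is hereditarily non-pythagorean.

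For the second assertion I would first reduce hereditary non-quadratic-closedness to hereditary non-pythagoreanness. Since $\sqrt{-1} \in F_\mathrm{quad}$, the field $F_\mathrm{quad}$ is nonreal of characteristic $\ne 2$, and so is every finite extension $L/F_\mathrm{quad}$; for such a nonreal field, being quadratically closed is equivalent to being pythagorean. Hence $F_\mathrm{quad}$ is hereditarily non-quadratically closed if and only if it is hereditarily non-pythagorean. It then remains to apply \Cref{main theorem} once more with $E_0 = F_\mathrm{quad}$: this field is quadratically closed, hence pythagorean, and the same normality argument as above---now using that $\sigma(\sqrt{a}) = \pm\sqrt{\sigma(a)}$ lies in $F_\mathrm{quad}$ because $F_\mathrm{quad}$ is quadratically closed---shows that $F_\mathrm{quad}/F$ is Galois. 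Therefore $F_\mathrm{quad}$ is hereditarily non-pythagorean, and by the equivalence just noted it is hereditarily non-quadratically closed.

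The only step requiring genuine care is the normality of the closures over $F$, that is, the claim that the Galois conjugates of the iteratively adjoined square roots remain inside the closure. This is precisely where the defining closure property (pythagorean, respectively quadratically closed) enters in an essential way; once normality is established, the remainder is a direct invocation of \Cref{main theorem} together with the elementary equivalence of ``pythagorean'' and ``quadratically closed'' for nonreal fields.
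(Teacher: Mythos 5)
Your proposal is correct and takes essentially the same approach as the paper: both deduce the result immediately from \Cref{main theorem}, the only ingredient to check being that $F_\mathrm{pyth}/F$ and $F_\mathrm{quad}/F$ are Galois extensions. The sole difference is that the paper cites \cite[Chapter VII, pp. 219--220]{La-05} for this Galois property, while you prove it directly (and correctly) by showing each closure is stable under $\gal(\bar{F}/F)$, using the pythagorean (resp. quadratically closed) property to keep the conjugated square roots inside the closure.
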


\begin{proof}
This result follows immediately from \Cref{main theorem} because
$F_{\mathrm{pyth}}/F$ and $F_{\mathrm{quad}}/F$ are both Galois extensions by \cite[Chapter VII, pp. 219-220]{La-05} or
by \cite[Chapter VIII, p. 258]{La-05}.
\end{proof}

\section{Inseparable extensions and pythagorean closures of fields}

We collect some results on separable and inseparable extensions 
with connections to pythagorean closures of fields that are needed in the 
following section.

All fields in this section have positive characteristic different from two. 
In particular, for such a field $E$, we have $E_\mathrm{pyth}= E_\mathrm{quad}$, and that $E$ is pythagorean if and only if $E$ is quadratically closed.

\begin{lemma}\label{inseparable}
Let $L/K$ be a finite purely inseparable algebraic extension of fields of characteristic different from $2$.
\begin{enumerate}
\item The canonical map of square classes 
$K^{\times}/(K^{\times})^{2} \to L^{\times}/(L^{\times})^{2}$
is an isomorphism.  In particular, $K$ is pythagorean if and only if $L$ is
pythagorean.
\item  $L_{\mathrm{pyth}}/K_{\mathrm{pyth}}$ is a finite purely inseparable
extension, and $L_{\mathrm{pyth}} = K_{\mathrm{pyth}} L$.
\end{enumerate}
\end{lemma}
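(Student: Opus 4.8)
The plan is to establish part (1) first and then deduce part (2) from it together with the fact that, in positive characteristic, every field here is nonreal, so that $E_{\mathrm{pyth}} = E_{\mathrm{quad}}$ is the \emph{smallest} quadratically closed field containing $E$ inside a fixed algebraic closure. Let $p = \cha K$, which is odd, and recall that $L/K$ purely inseparable means every $\alpha \in L$ satisfies $\alpha^{p^n} \in K$ for some $n$. The engine throughout is that each $p^n$ is coprime to $2$. For surjectivity of the square-class map in part (1), given $\alpha \in L^\times$ I would pick $n$ with $\alpha^{p^n} \in K$ and write $p^n = 2k+1$, so that $\alpha \equiv \alpha^{p^n} \pmod{(L^\times)^2}$ with $\alpha^{p^n} \in K^\times$. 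For injectivity, if $a \in K^\times$ equals $b^2$ with $b \in L$, I would choose $n$ with $b^{p^n} \in K$ and a B\'ezout relation $2s + p^n t = 1$ to get $b = (b^2)^s (b^{p^n})^t = a^s (b^{p^n})^t \in K$, so $a \in (K^\times)^2$. This proves the isomorphism.

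For the ``in particular'' clause I would treat the two implications separately. That $L$ pythagorean forces $K$ pythagorean is immediate from \Cref{Diller-Dress}. For the converse, assume $K$ is pythagorean and take $\sigma = \sum_i c_i^2 \in (\sum L^2)^\times$ with $c_i \in L$. Fixing $e$ with $L^{p^e} \subseteq K$ and using that the $p^e$-power map is additive in characteristic $p$, I get $\sigma^{p^e} = \sum_i (c_i^{p^e})^2 \in \sum K^2 = K^2$, so $\sigma^{p^e}$ is a square in $L$; since $p^e$ is odd, $\sigma$ itself is a square in $L$, and $L$ is pythagorean.

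For part (2), set $M := K_{\mathrm{pyth}} L$ inside a fixed algebraic closure; I claim $M = L_{\mathrm{pyth}}$. Writing $L = K(\alpha_1,\ldots,\alpha_r)$ with each $\alpha_i$ purely inseparable over $K$, each $\alpha_i$ is purely inseparable over $K_{\mathrm{pyth}} \supseteq K$ as well, so $M/K_{\mathrm{pyth}}$ is a finite purely inseparable extension. Applying part (1) to $M/K_{\mathrm{pyth}}$ and using that $K_{\mathrm{pyth}}$ is pythagorean shows $M$ is pythagorean, hence quadratically closed. Then $L \subseteq M$ and minimality of the quadratic closure give $L_{\mathrm{pyth}} \subseteq M$, while $K \subseteq L \subseteq L_{\mathrm{pyth}}$ (again quadratically closed) gives $K_{\mathrm{pyth}} \subseteq L_{\mathrm{pyth}}$, hence $M = K_{\mathrm{pyth}}L \subseteq L_{\mathrm{pyth}}$. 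Thus $M = L_{\mathrm{pyth}}$ and $L_{\mathrm{pyth}}/K_{\mathrm{pyth}} = M/K_{\mathrm{pyth}}$ is finite purely inseparable.

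The main obstacle I anticipate is the converse implication in part (1): the square-class isomorphism transports individual square classes but not, on its own, the sum-of-squares condition defining pythagorean fields. The crux is the characteristic-$p$ observation that the odd-exponent Frobenius map $x \mapsto x^{p^e}$ is additive, which lets one push a sum of squares from $L$ down into $K$ and then pull the resulting single square back up. Once this is secured, part (2) is bookkeeping that combines part (1) with the minimality of the quadratic closure.
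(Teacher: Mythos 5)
Your proof is correct, and it reaches the lemma by a genuinely different route than the paper, most visibly in part (2). In part (1) your surjectivity argument ($\alpha \equiv \alpha^{p^n} \bmod (L^{\times})^2$) coincides with the paper's, but for injectivity the paper simply cites that $[L:K]$ is odd (Springer's theorem for odd-degree extensions), whereas your B\'ezout identity $b = a^s(b^{p^n})^t$ is more elementary and proves the stronger fact that a square root in $L$ of an element of $K$ already lies in $K$. For the ``in particular'' clause the paper relies silently on the section's standing convention that all fields have odd positive characteristic: such fields are nonreal, so pythagorean equals quadratically closed, and quadratic closedness is exactly triviality of the square-class group, which the isomorphism transports; the obstacle you flagged therefore evaporates in this setting, but your substitute --- \Cref{Diller-Dress} in one direction, and in the other the Frobenius computation $\sigma^{p^e} = \sum_i (c_i^{p^e})^2 \in \sum K^2 = K^2$ followed by extraction of an odd root --- is valid and has the merit of never invoking nonreality. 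In part (2) the paper works bottom-up: it takes $\alpha \in L_{\mathrm{pyth}}$, places it in a finite tower of quadratic extensions of $L$, and applies part (1) at each floor of the tower to descend the quadratic generators into $K$, obtaining $N = LM$ with $M$ a finite $2$-extension of $K$, hence $M \subseteq K_{\mathrm{quad}} = K_{\mathrm{pyth}}$. You work top-down: apply part (1) once to $K_{\mathrm{pyth}}L$ over $K_{\mathrm{pyth}}$ to conclude it is pythagorean (hence quadratically closed), then squeeze $L_{\mathrm{pyth}} = K_{\mathrm{pyth}}L$ from both sides using minimality of the quadratic closure. Your version is shorter, but it leans on that minimality property (every quadratically closed overfield of $E$ inside a fixed algebraic closure contains $E_{\mathrm{quad}}$), a standard fact the paper never states explicitly; the paper's tower descent is, in effect, an inline proof of exactly this fact. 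Both arguments ultimately need the identification $E_{\mathrm{pyth}} = E_{\mathrm{quad}}$ for nonreal fields, which the paper records at the start of the section.
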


\begin{proof}
(1) The map is injective because $[L:K]$ is odd.  Let $\alpha \in L^{\times}$. If $\cha(K)=p$ 
then $\alpha^{p^i} \in K^{\times}$ for some $i \ge 0$ because $L/K$ is 
purely inseparable.  Then 
$a: = \alpha^{p^i} = \alpha \alpha^{p^i - 1} \in \alpha (L^{\times})^2$.  
Thus the map is also surjective.

(2) It is clear that $K_{\mathrm{pyth}} L\subseteq L_{\mathrm{pyth}}$.  Let $\alpha \in L_{\mathrm{pyth}}^{\times}$.
Then $\alpha$ lies in some finite $2$-extension $N$ of $L$.  
Any $2$-extension of $K$ is a separable extension because $\cha K \ne 2$.  
Let $N_1/L$ be a quadratic extension.  It follows from (1) that $N_1 = L(\sqrt{a})$
for some $a \in K$.  
Then $N_1 = K(\sqrt{a})L$ and $N_1/K(\sqrt{a})$ is purely inseparable.
By repeating this for a tower of quadratic extensions that generates $N/L$, 
it follows from (1) that $N = LM$ where $M$ is some finite $2$-extension of $K$.
Since $M \subseteq K_{\mathrm{pyth}} $, we have that $\alpha \in N = LM \subseteq L K_{\mathrm{pyth}}$.
Therefore, $L_{\mathrm{pyth}} = K_{\mathrm{pyth}} L$ .  It follows that $L_{\mathrm{pyth}}/K_{\mathrm{pyth}}$  is purely inseparable because
$L/K$ is purely inseparable.
\end{proof}

\begin{lemma}\label{separable-inseparable}
Let $L/K$ be a finite purely inseparable algebraic extension of fields of characteristic different from $2$.
Let $N/L$ be a separable algebraic extension.  
Let $M$ be the maximal subfield of $N$ that is separable over $K$.
Then the following statements hold.
\begin{enumerate}
\item $N = LM$ and $N/M$ is purely inseparable.
\item $N/L$ is a Galois extension if and only if $M/K$ is a Galois extension.
\item $N$ is pythagorean if and only if $M$ is pythagorean.
\end{enumerate}
\end{lemma}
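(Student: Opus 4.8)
The plan is to establish (1) first, since it carries the whole argument, and then read (2) and (3) off from it. Fix $p=\cha K$ and choose an integer $e\ge 0$ with $x^{p^e}\in K$ for all $x\in L$; such an $e$ exists because $L/K$ is finite purely inseparable. The heart of the proof will be the claim that $\alpha^{p^e}\in M$ for every $\alpha\in N$. To prove this I would let $f(X)=\sum_i a_i X^i\in L[X]$ be the (separable) minimal polynomial of $\alpha$ over $L$ and set $g(X)=\sum_i a_i^{p^e}X^i$. Then $g\in K[X]$ because each $a_i^{p^e}\in K$, and raising $f(\alpha)=0$ to the $p^e$-th power gives $g(\alpha^{p^e})=0$. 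Since applying the coefficientwise Frobenius $a\mapsto a^{p^e}$ sends the distinct roots of $f$ to the (still distinct, as Frobenius is injective) elements $\beta_j^{p^e}$, the polynomial $g$ is separable; hence the minimal polynomial of $\alpha^{p^e}$ over $K$ divides a separable polynomial and is separable, so $\alpha^{p^e}$ is separable over $K$ and therefore lies in $M$.

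Granting this claim, $N/M$ is purely inseparable. The extension $N/LM$ is then separable, being intermediate in the separable extension $N/L$, and purely inseparable, being intermediate in $N/M$; an extension that is both must be trivial, so $N=LM$. This proves (1). I would also record here, for later use, that $N/M$ is \emph{finite}: since $L/K$ is finite, $N=LM$ is generated over $M$ by the finitely many generators of $L$ over $K$, each of which is purely inseparable over $M$.

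For (2), I would first note that $N/L$ and $M/K$ are both separable (the former by hypothesis, the latter by construction of $M$), so in each case ``Galois'' means ``normal,'' and the task is to transport normality across the purely inseparable $L/K$. The mechanism is that $L/K$ admits a unique $K$-embedding into a fixed algebraic closure $\overline{K}$, so any $K$-embedding of $M$ extends to an embedding of $N$ whose restriction to $L$ is forced to be the inclusion, while conversely any $L$-embedding of $N$ fixes $K$ and so restricts to a $K$-embedding of $M$. If $N/L$ is normal, such an extension is an $L$-automorphism of $N$; being a $K$-automorphism it preserves separability over $K$ and hence carries $M$ onto $M$, so $M/K$ is normal. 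If instead $M/K$ is normal, then for any $L$-embedding $\tau$ of $N$ one has $\tau(M)=M$ and $\tau(L)=L$, whence $\tau(N)=\tau(LM)=N$, so $N/L$ is normal.

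Finally, (3) is immediate from (1): $N/M$ is finite purely inseparable, so \Cref{inseparable}(1), applied to the extension $N/M$, shows that $M$ is pythagorean if and only if $N$ is pythagorean. I expect the only genuine obstacle to be the key observation in (1) that $\alpha^{p^e}\in M$, that is, verifying that the coefficientwise $p^e$-power twist of a separable polynomial remains separable; once that is in hand, statements (1)--(3) follow formally from the separable/purely-inseparable dichotomy together with \Cref{inseparable}.
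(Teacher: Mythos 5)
Your proof is correct and takes essentially the same route as the paper: part (1) via the observation that $N/LM$ is simultaneously separable and purely inseparable, part (2) via embedding arguments that exploit the unique $K$-embedding of the purely inseparable extension $L/K$ together with the maximality of $M$, and part (3) by applying \Cref{inseparable}(1) to the finite purely inseparable extension $N/M$. The only differences are cosmetic: you prove from scratch, via the coefficientwise Frobenius twist $f \mapsto g$, the standard fact that $N/M$ is purely inseparable (which the paper simply invokes), and you make explicit the finiteness of $N/M$ that is needed for (3) but left implicit in the paper.
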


\begin{proof}

(1) Then $N/M$ is purely inseparable, and so 
$N/LM$ is both separable and purely inseparable. It follows that $N = LM$.
Since $L/K$ is purely inseparable, it follows that $N/M = LM/M$ is 
purely inseparable.

(2) Suppose that $M/K$ is a Galois extension.  
Let $\varphi:N \to L^{alg}$ be an $L$-isomorphism.  Then $\varphi$ is also a 
$K$-isomorphism. We have $\varphi(L) = L$ because $L/K$ is purely inseparable.
Since $M/K$ is a Galois extension, we have $\varphi(M) = M$. 
Thus $\varphi(N) = \varphi(LM) = \varphi(L)\varphi(M) = LM = N$ and so 
$N/L$ is a Galois extension.

Now suppose that $N/L$ is a Galois extension and let $\varphi: M \to K^{alg}$
be a $K$-isomorphism.   There exists an extension to a $K$-isomorphism 
$\varphi: N \to K^{alg}$.  Since $L/K$ is purely inseparable, it follows that
$\varphi: N \to K^{alg}$ is an $L$-isomorphism.  Since $N/L$ is a Galois extension,
we have $N = \varphi(N) = \varphi(L)\varphi(M) = L\varphi(M)$. 
Since $M/K$ is a separable extension, it follows that 
$\varphi(M)/K$ is a separable extension.  
Since $\varphi(M) \subset N$ and $M$ is the maximal subfield of $N$ 
that is separable over $K$, we have $\varphi(M) \subseteq M$.  
In a similar way, we have $M \subseteq \varphi(M)$, and thus $\varphi(M) = M$.  Therefore, $M/K$ is a Galois extension.

(3) Since $N/M$ is purely inseparable, the result follows from 
\Cref{inseparable} (1).
\end{proof}

\begin{proposition}\label{sep-purely-insep}
Let $L/K$ be a finite purely inseparable algebraic extension of fields of characteristic different from $2$.
Let $L_0/L$ be a separable extension and assume that $L_0$ is 
pythagorean.
Let $K_0$ be the maximal subfield of $L_0$ that is separable over $K$.

Then every purely inseparable extension of $K_0$ and $L_0$ is
pythagorean. 

Moreover, $K_0$ is hereditarily non-pythagorean if and only if
$L_0$ is hereditarily non-pythagorean.
\end{proposition}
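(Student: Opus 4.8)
The plan is to build everything on top of \Cref{separable-inseparable} and \Cref{hnq}. First I would apply \Cref{separable-inseparable} with $N = L_0$ and $M = K_0$: since $L_0/L$ is separable and $K_0$ is by definition the maximal subfield of $L_0$ separable over $K$, part (1) gives $L_0 = L K_0$ with $L_0/K_0$ purely inseparable, and part (3) gives that $L_0$ is pythagorean if and only if $K_0$ is. As $L_0$ is assumed pythagorean, $K_0$ is pythagorean as well.

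For the first assertion I would invoke \Cref{inseparable}(1), according to which a finite purely inseparable extension of a pythagorean field is again pythagorean. Since both $K_0$ and $L_0$ are pythagorean, every finite purely inseparable extension of either is pythagorean; an arbitrary (possibly infinite) purely inseparable extension is the union of its finite subextensions, and a sum of squares, being a finite sum, already lies in one of them and hence is a square there, so the conclusion passes to the union.

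For the hereditary equivalence I would use \Cref{hnq} to reduce to separable extensions and then set up a degree-preserving correspondence between the proper finite separable extensions of $K_0$ and those of $L_0$. Given a proper finite separable extension $M_0/K_0$, put $N_0 := L M_0 = L_0 M_0$. Because $L_0/K_0$ is purely inseparable and $M_0/K_0$ is separable, $L_0$ and $M_0$ are linearly disjoint over $K_0$, so $[N_0 : L_0] = [M_0 : K_0] > 1$ and $N_0/L_0$ is a proper finite separable extension. Conversely, given a proper finite separable $N_0/L_0$, I note that $N_0/L$ is separable (as $L_0/L$ is) and let $M_0$ be the maximal subfield of $N_0$ separable over $K$; then \Cref{separable-inseparable}(1) gives $N_0 = L M_0$ with $N_0/M_0$ purely inseparable, while $K_0 \subseteq M_0$ because $K_0/K$ is separable and $K_0 \subseteq L_0 \subseteq N_0$, and the same linear-disjointness computation yields $[M_0:K_0] = [N_0:L_0] > 1$, so $M_0/K_0$ is a proper finite separable extension.

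The point that makes the two directions match is that in either construction $M_0$ is exactly the maximal subfield of $N_0$ separable over $K$: it is separable over $K$ and $N_0/M_0$ is purely inseparable, and this pair of properties characterizes the maximal separable subfield uniquely. With this identification, \Cref{separable-inseparable}(3) says $N_0$ is pythagorean if and only if $M_0$ is. Consequently every proper finite separable extension of $K_0$ is non-pythagorean if and only if every proper finite separable extension of $L_0$ is non-pythagorean, and \Cref{hnq} upgrades this to the asserted equivalence of hereditary non-pythagoreanness for $K_0$ and $L_0$. I expect the only genuine bookkeeping to be the verification that the $M_0$ produced in each direction really is the maximal $K$-separable subfield of $N_0$ (so that \Cref{separable-inseparable}(3) is applicable), together with the linear-disjointness degree count that keeps the word \emph{proper} valid on both sides.
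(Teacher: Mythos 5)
Your proof is correct and takes essentially the same route as the paper's: reduce to separable extensions via \Cref{hnq}, deduce that $K_0$ is pythagorean from \Cref{separable-inseparable}, and transfer non-pythagoreanness back and forth along the correspondence $M_0 \leftrightarrow N_0 = L_0 M_0$ using \Cref{separable-inseparable} and \Cref{inseparable}(1). The only differences are bits of bookkeeping the paper leaves implicit (the linear-disjointness degree count ensuring properness, the identification of $M_0$ as the maximal $K$-separable subfield, and the direct-limit argument covering infinite purely inseparable extensions), all of which you verify correctly.
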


\begin{proof}
By \Cref{separable-inseparable} (3), $K_0$ is pythagorean.
\Cref{inseparable} (1) implies that every purely inseparable extension of $K_0$
and $L_0$ is pythagorean.

Assume that $K_0$ is hereditarily non-pythagorean. Then  
every proper finite separable extension of $K_0$ is not pythagorean.  
Let $N$ be a proper finite separable extension of $L_0$. 
Since $L_0/K_0$ is purely inseparable by \Cref{separable-inseparable} (1), 
\Cref{separable-inseparable} 
implies that there exists a proper finite separable extension 
$M/K_0$ such that $N = L_0 M$.  
Since $M$ is not pythagorean by assumption and $N/M$ is purely 
inseparable, \Cref{inseparable} (1) implies that $N$ is not pythagorean.
Thus $L_0$ is hereditarily non-pythagorean by \Cref{hnq}.

Assume that $L_0$ is hereditarily non-pythagorean.
Then every proper finite separable extension of $L_0$ is not pythagorean.  Let $M$ be a proper finite separable extension of $K_0$.
Then $L_0 M$ is a proper finite separable extension of $L_0$.  Thus $L_0 M$ is
not pythagorean. Since $L_0 M/M$ is purely inseparable, 
\Cref{inseparable} (1) implies that $M$ is not pythagorean.
Thus $K_0$ is hereditarily non-pythagorean by \Cref{hnq}.
\end{proof}

\section{Pythagorean and quadratic closures of function fields}

The main result of this section, \Cref{main theorem II}, shows that the 
pythagorean closure of a field that is finitely generated of transcendence 
degree at least one over a subfield is 
hereditarily non-pythagorean. 
In order to be able to apply \Cref{test2} in the case of 
a general base field, we need to show a result on extensions of nonreal valuations, inspired by the statement from number theory that 
``there are infinitely many primes that split completely in a finite extension''.

\begin{theorem}\label{valuation splitting}
Let $k$ be field. Let $E/k$ be an algebraic function field in one variable and let 
$F/E$ be a finite separable extension.  Assume that $E$ is separable over some
rational function field of $k$.
Then there are infinitely many $k$-valuations $v$ 
on $E$ that extend to $[F:E]$ valuations on $F$.
\end{theorem}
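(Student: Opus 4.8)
The plan is to reduce the whole statement to the case where the base is the rational function field $k(x)$ and the top field is Galois over it, and then to produce the required valuations by an explicit argument on the plane curve cut out by a minimal polynomial.

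First I would set up the reductions. Since $E/k(x)$ is separable by hypothesis and $F/E$ is separable, $F/k(x)$ is finite separable; let $\hat F$ be the Galois closure of $F$ over $k(x)$, so that $\hat F/k(x)$ is finite Galois with $\hat F \supseteq F \supseteq E \supseteq k(x)$. If a $k$-place $v_0$ of $k(x)$ splits completely in $\hat F$, then by multiplicativity of ramification index and residue degree it splits completely in each intermediate field; in particular each of its $[E:k(x)]$ extensions to $E$ has exactly $[F:E]$ extensions to $F$, i.e.\ splits completely in $F/E$. Hence it suffices to find infinitely many $k$-places of $k(x)$ that split completely in $\hat F$: each yields $[E:k(x)]$ places of $E$ of the desired kind, and distinct places of $k(x)$ give distinct places of $E$.

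Next I would exploit that $\hat F/k(x)$ is Galois. Then all extensions of a fixed place $v_0$ share the same $e$ and $f$, so $v_0$ splits completely the moment it admits a \emph{single} unramified extension $w$ of residue degree one, that is, with $\kappa_w = \kappa_{v_0}$. Write $\hat F = k(x)(\theta)$ with $\theta$ integral over $k[x]$ and separable minimal polynomial $h(x,T)\in k[x][T]$, where $\deg_T h = [\hat F:k(x)]$. Discarding the finitely many places dividing $\operatorname{disc}_T h$, the leading coefficient, and the conductor of $k[x][\theta]$, such a $w$ over the place $x=a$ (residue field $k(a)$) exists exactly when $h(a,T)$ has a root in $k(a)$. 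Geometrically this means a closed point $(a,c)$ of the curve $h=0$ whose second coordinate $c$ lies in the field $k(a)$ generated by the first.

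To produce infinitely many such points I would split on the constant field. When $k$ is infinite I fix $c\in k$ and take $a$ to be any root of $h(x,c)\in k[x]$; then $c\in k\subseteq k(a)$ is a root of $h(a,T)$, giving the desired residue-degree-one prime, and letting $c$ range over infinitely many elements of $k$ yields infinitely many distinct closed points $(a,c)$, hence infinitely many distinct places $v_0$ (each $v_0$ lies under at most $\deg_T h$ of them). When $k$ is finite, $\hat F$ is a global function field and the classical Chebotarev density theorem for $\hat F/k(x)$ directly supplies infinitely many completely split places. The hard part, and the reason for this roundabout route, is precisely the arbitrariness of the constant field $k$: one cannot assume $k$ has many rational points, so a naive search for rational points on the base curve may fail, and Chebotarev is unavailable unless $k$ is finite. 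Passing to $k(x)$ (whose places are explicit and realize arbitrary finite residue extensions of $k$) together with the Galois reduction (so that one relative-degree-one prime forces complete splitting) is what circumvents this. A single degenerate case, where $h$ involves only $T$, i.e.\ $\hat F/k(x)$ is a pure constant-field extension $k(x)\ell$, would be handled separately by choosing places of $k(x)$ whose residue field contains $\ell$, which is elementary.
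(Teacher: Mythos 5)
Your proof is correct, and its skeleton coincides with the paper's: both reduce to a Galois extension $\widehat{F}$ of a rational function field $k(x)$, both use that, by conjugacy of the extensions of a place in a Galois extension, a single unramified extension of residue degree one over $v_0$ already forces complete splitting, and both detect such an extension, away from the discriminant, by exhibiting a root of the minimal polynomial $h(a,T)$ in the residue field $k(a)$. The genuine divergence is in the counting step, i.e.\ producing infinitely many such places. The paper does this uniformly in $k$ by an elementary Euclid-style argument: substitute $g_r(X)=\prod_{j=1}^n p_j(X)^r$ into $f(X,T)$, note that the multiplicities of the finitely many known irreducible factors of $f(X,g_r(X))$ stabilize as $r$ grows while $\deg_X f(X,g_r(X))$ grows linearly in $r$, and conclude that a new irreducible factor, hence a new completely split place, must appear. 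You instead split into cases: for $k$ infinite you fix the second coordinate $c\in k$ and solve $h(x,c)=0$ for $x$ --- which is exactly the paper's criterion applied with the constant substitution $g(X)=c$, with infinitude obtained by varying $c$ rather than by degree growth --- and for $k$ finite you invoke the Chebotarev density theorem for global function fields. Both are valid, so this is a correct alternative route; your infinite-$k$ trick is arguably more transparent, but the price is a case distinction and, for finite $k$, reliance on a much heavier external theorem (and one should cite the function-field version of Chebotarev that accounts for constant-field extensions of $\widehat{F}/k(x)$, although the conclusion that infinitely many places split completely does follow). This is somewhat against the spirit of the section, whose stated purpose is to give an elementary, constant-field-independent substitute for exactly such density statements; the paper's single substitution argument buys that uniformity. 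One detail you should make explicit in the infinite-$k$ case: $h(x,c)$ can be a nonzero constant for particular $c$, in which case it has no root $a$; writing $h(x,T)-h(0,T)$ as a polynomial of degree at most $\deg_T h-1$ in $T$ over $k(x)$, this happens for at most $\deg_T h-1$ values of $c$ unless $h\in k[T]$ (your degenerate constant-field case), so it only costs finitely many values of $c$, but the sentence ``take $a$ to be any root of $h(x,c)$'' needs this remark to be legitimate.
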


\begin{proof}
By enlarging $F$ and taking a subfield of $E$ if necessary, 
it is sufficient to show the claim in the case where $F/E$ is a Galois extension and where $E=k(X)$ is a rational function field.
In this case, all $k$-valuations on $F$ extending a given $k$-valuation on $E$ are conjugate by the Galois group of $F/E$, and in order to show that there are $d:=[F:E]$ such extensions, it is equivalent to show that any one of them is unramified over $v$ and its residue field equals the residue field of $v$.

Let $f(T) \in E[T]$ be the minimal polynomial of a primitive element for $F/E$. By choosing the primitive element appropriately, we may assume that
the coefficients of $f$ lie in $k[X]$. Write 
$f=f(X,T) = T^d + \sum_{i=0}^{d-1} a_i(X) T^i$, where each $a_i(X) \in k[X]$.
Let $\Delta \in k[X]$ be the discriminant of $f$ considered as a polynomial in one variable over $k[X]$.
Any monic irreducible polynomial $p \in k[X]$ that does not divide $\Delta$ in $k[X]$ and that divides $f(X,g(X))$ for some $g(X) \in k[X]$ yields a $k$-valuation $v_p$ on $E=k(X)$ that extends to $[F:E]$ distinct $k$-valuations on $F$.
To see this, let $L = k[x]/(p(x))$.  
Then $L$ is a finite algebraic extension of $k$.  
Let $\alpha \in L$ be a root of $p$.
Let $v_p$ denote the valuation on $k(x)$ associated to the monic
irreducible polynomial $p$.
The element $g(\alpha) \in L$ is a root of the image of the polynomial $f(\alpha,T)$ 
in $k[x]/(p)$ because $p \mid f(X, g(X))$.
Thus the valuation $v_p$ extends to a valuation on $F$ that is unramified (because
$p \nmid \Delta$) with residue field equal to $L$ (because $g(\alpha) \in L$).
Since $F/E$ is Galois, it follows that $v_p$ extends to $d$ distinct valuations on $F$.

We denote by $\mathcal{P}_0$ the set of such $p$.
We further denote by $\mathcal{P}_\Delta$ the finite set of monic irreducible polynomials $p\in k[X]$ that divide $\Delta$.
Let $\mathcal{P}$ be any nonempty set of monic irreducible polynomials 
that contains $\mathcal{P}_0 \cup \mathcal{P}_\Delta$ and at most finitely many
other monic irreducible polynomials. 
We will show that $\mathcal{P}_0$ is infinite.
Suppose on the contrary that $\mathcal{P}_0$ is finite, and thus $\mathcal{P}$ is finite. Write $\mathcal{P}=\{p_1, \ldots, p_n\}$.
For any $r\in \mathbb{N}$, let
\[g_r(X) =  \prod_{j=1}^n p_j(X)^{r} \in k[X].\]
We will show that for $r$ sufficiently large, there exists a monic irreducible 
$q \in k[X]$, $q \notin \mathcal{P}$, such that $q$ divides
$f(X,g_r(X))$ in $k[X]$,
thereby yielding the contradiction that 
$q \in \mathcal{P}_0 \subseteq \mathcal{P}$.

For $r$ sufficiently large, we have that  
\[v_{p_j}(f(X,g_r(X))) = v_{p_j}(f(X, 0))\] 
for every $j \in \{1,\ldots,n\}$.  
Thus, for $r$ sufficiently large, the monic irreducible polynomials 
$\{p_1, \ldots, p_n\}$ appear always 
with the same multiplicity in the factorization of $f(X,g_r(X))$ in $k[X]$.
On the other hand, since $\mathcal{P}\neq \emptyset$, we have that for sufficiently large $r$, $\deg_X( f(X,g_r(X))) = rd\sum_{j=1}^n \deg_X(p_j )$, 
hence growing linearly in $r$. 
In particular, for all sufficiently large $r$, the polynomial $f(X,g_r(X)) \in k[X]$ has a monic irreducible factor $q$ distinct from $p_1,\ldots,p_n$. 
\end{proof}

\begin{corollary}\label{valuation splitting corollary}
Let $k$ be field. Let $E/k$ be an algebraic function field in one variable and let 
$F/E$ be a finite separable extension.  Assume that $E$ is separable over some
rational function field of $k$.
Then there are infinitely many 
nonreal 
$k$-valuations $v$ 
on $E$ that extend to $[F:E]$ valuations on $F$.
\end{corollary}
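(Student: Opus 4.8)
The plan is to reduce the nonreality of a residue field $\kappa_v$ to the presence of a square root of $-1$ in $\kappa_v$, and then to force this by applying \Cref{valuation splitting} not to $F$ itself but to the slightly larger field $\hat F := F(\sqrt{-1})$. Fix a square root $i$ of $-1$ in an algebraic closure of $F$. If $i \in E$ (which in particular happens whenever $\cha k = 2$, since then $i=1$), then every $k$-valuation $v$ on $E$ already has nonreal residue field: the element $i$ is a $v$-unit, so its residue is a square root of $-1$ in $\kappa_v$. In this case \Cref{valuation splitting} applied directly to $F/E$ produces infinitely many completely split $k$-valuations, all automatically nonreal, and we are done.

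So assume $i \notin E$; then necessarily $\cha k \neq 2$ and $[E(i):E]=2$. Set $\hat F = F(i) = F\cdot k(i)$. Since $F/E$ is finite separable and $k(i)/k$ is separable algebraic, $\hat F/E$ is again finite separable, and $E$ remains separable over the same rational function field of $k$. Hence \Cref{valuation splitting} applies to $\hat F/E$ and yields infinitely many $k$-valuations $v$ on $E$ that split completely in $\hat F$, i.e.\ extend to exactly $[\hat F:E]$ valuations on $\hat F$, each with ramification index and residue degree equal to $1$.

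I would then descend this complete splitting through the intermediate fields in $E \subseteq E(i) \subseteq \hat F$ and $E \subseteq F \subseteq \hat F$. Using the multiplicativity of ramification indices and residue degrees in towers together with the fundamental equality $\sum_{w\mid v} e(w\mid v)\,f(w\mid v) = [F:E]$ (and the analogous identity for $E(i)/E$) for finite separable extensions of function fields in one variable, complete splitting in $\hat F$ forces $e=f=1$ for every valuation lying over $v$ in any intermediate field. Consequently $v$ splits completely in $F$, giving the required $[F:E]$ distinct extensions, and $v$ also splits completely in $E(i)$. Choosing any extension $w$ of $v$ to $E(i)$, we have $\kappa_w = \kappa_v$ since $f(w\mid v)=1$; as $i$ is a $w$-unit, its residue is a square root of $-1$ in $\kappa_w = \kappa_v$, so $\kappa_v$ is nonreal. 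Since \Cref{valuation splitting} supplies infinitely many such $v$, the corollary follows.

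The routine bookkeeping (separability of $\hat F/E$, the tower formulas) is harmless; the one point deserving care is the fundamental equality $\sum e_i f_i = n$ invoked in the descent, which must be used in the form valid for places of algebraic function fields in one variable, where these valuations are defectless, rather than in a global or Dedekind-domain form. The conceptual heart is simply the observation that \emph{forcing $\sqrt{-1}$ into the residue field is precisely complete splitting of the quadratic extension $E(\sqrt{-1})/E$}, which is automatically arranged the moment one replaces $F$ by $F(\sqrt{-1})$ before applying the theorem.
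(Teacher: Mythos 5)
Your proof is correct and takes essentially the same route as the paper: both apply \Cref{valuation splitting} to the enlarged extension $F(\sqrt{-1})/E$, so that complete splitting forces a square root of $-1$ into the residue field $\kappa_v$, making it nonreal. The only cosmetic differences are that you split into the cases $\sqrt{-1}\in E$ versus $\sqrt{-1}\notin E$ where the paper splits into $E$ nonreal versus $E$ real, and that you spell out the descent of complete splitting from $F(\sqrt{-1})$ to the intermediate field $F$ (via the fundamental equality), which the paper leaves implicit.
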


\begin{proof}
If $E$ is nonreal, then the statement is equivalent to \Cref{valuation splitting}. If $E$ is real, then we apply \Cref{valuation splitting} to the separable extension $F(\sqrt{-1})/E$.
Suppose that $v$ is a $k$-valuation on $E$ that splits completely in $F(\sqrt{-1})$.
Then any extended $k$-valuation is unramified over $E$ and has the same 
residue field as $v$.  Since $\sqrt{-1}$ is a unit in any 
valuation ring, the residue field contains $\sqrt{-1}$.  In particular, 
the residue field of $v$ is nonreal.
\end{proof}

By \cite[Chapter III, Proposition 9.2]{S}, if $k$ is perfect, then every function field
in one variable over $k$ is separable over some rational function field of $k$.

\begin{theorem}\label{main theorem II}
Let $E$ be a field of characteristic different from $2$ that is finitely generated of transcendence degree at least one over a subfield. 
Let $E_0/E$ be a Galois extension and assume that $E_0$  is either pythagorean or quadratically closed. Then $E_0$ is hereditarily non-pythagorean.
\end{theorem}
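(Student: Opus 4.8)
The plan is to reduce to \Cref{test2}, applied to the base field $E$ together with the Galois extension $E_0/E$, using \Cref{valuation splitting corollary} to supply the required completely split nonreal valuations. First, since $\cha E_0 \neq 2$, if $E_0$ is quadratically closed then it is nonreal and every sum of squares is already a square, so $E_0$ is pythagorean; hence it suffices to treat the pythagorean case. Next I would cut the transcendence degree down to one: choosing a transcendence basis $t_1,\dots,t_n$ of $E/k$ (with $n\ge 1$), let $\kappa$ be the relative algebraic closure of $k(t_1,\dots,t_{n-1})$ in $E$. Then $E/\kappa$ is a function field in one variable and $\kappa\subseteq E\subseteq E_0$. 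If $E$ were separable over a rational function field of $\kappa$, then \Cref{valuation splitting corollary} would produce, for every proper finite separable $K/E$, infinitely many nonreal nontrivial discrete $\kappa$-valuations (value group $\mathbb Z$) on $E$ splitting completely in $K$, and \Cref{test2} would immediately give that $E_0$ is hereditarily non-pythagorean.

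The obstacle is exactly this separable-generation hypothesis: when $\cha E=p>0$ and $\kappa$ is imperfect, $E/\kappa$ need not admit a separating transcendence basis, so the corollary does not apply directly. This is where the inseparability machinery of Section 4 enters. Since $E/\kappa$ is finitely generated, only finitely many elements of $\kappa$ obstruct separable generation, so there is a \emph{finite} purely inseparable extension $\kappa'/\kappa$, obtained by adjoining finitely many $p$-power roots of elements of $\kappa$, such that $\widetilde E:=E\kappa'$ is separably generated over $\kappa'$; moreover $\widetilde E/E$ is finite purely inseparable. (In characteristic $0$ one takes $\kappa'=\kappa$ and this step is vacuous.)

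Now set $\widetilde E_0 := E_0\kappa'$, and check three transfers. First, $\widetilde E_0/E_0$ is finite purely inseparable, so by \Cref{inseparable}(1) the field $\widetilde E_0$ is again pythagorean. Second, since $E_0/E$ is separable and $\widetilde E_0/E_0$ is purely inseparable, $E_0$ is precisely the maximal subfield of $\widetilde E_0$ separable over $E$; applying \Cref{separable-inseparable}(2) with $K=E$, $L=\widetilde E$, $N=\widetilde E_0$, $M=E_0$ shows that $\widetilde E_0/\widetilde E$ is Galois because $E_0/E$ is Galois. Third, \Cref{sep-purely-insep} with the same identifications shows that $E_0$ is hereditarily non-pythagorean if and only if $\widetilde E_0$ is. Thus it suffices to prove that $\widetilde E_0$ is hereditarily non-pythagorean.

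Finally I would apply \Cref{test2} to the base field $\widetilde E$ and the Galois extension $\widetilde E_0/\widetilde E$, with $\widetilde E_0$ pythagorean. For each proper finite separable extension $K/\widetilde E$, \Cref{valuation splitting corollary}, applied to the function field $\widetilde E$ in one variable over $\kappa'$ (now separable over a rational function field of $\kappa'$), furnishes a nontrivial nonreal discrete $\kappa'$-valuation on $\widetilde E$ that splits completely in $K$. Hence \Cref{test2} yields that $\widetilde E_0$ is hereditarily non-pythagorean, and therefore so is $E_0$. The main difficulty throughout is the inseparable reduction of the second and third paragraphs: arranging a finite purely inseparable base extension that both restores separable generation and is compatible with the Galois and pythagorean structure, so that the transfer lemmas of Section 4 can be invoked; once separable generation is available, the valuation-theoretic input is exactly \Cref{valuation splitting corollary}.
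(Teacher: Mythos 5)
Your proof is correct and has the same skeleton as the paper's --- reduce to a separably generated function field in one variable, apply \Cref{valuation splitting corollary} together with \Cref{test2}, and transfer across purely inseparable extensions via \Cref{inseparable}, \Cref{separable-inseparable} and \Cref{sep-purely-insep} --- but your inseparability reduction runs in the opposite direction from the paper's, and that difference matters for self-containedness. The paper goes \emph{down}: it takes $E'\subseteq E$ to be the separable closure of a rational function field $k(x)$ inside $E$, so that $E'/k(x)$ is finite separable and $E/E'$ is finite purely inseparable; this uses only the elementary separable/purely-inseparable factorization of a finite extension. You go \emph{up}: you enlarge the constant field by a finite purely inseparable extension $\kappa'/\kappa$ so that $\widetilde E=E\kappa'$ becomes separably generated over $\kappa'$. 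That statement is true, but it is a genuinely nontrivial theorem (one descends a separating transcendence basis, and the finitely many witnesses of separability of the generators, from the perfect closure $\kappa^{1/p^{\infty}}$, over which finitely generated extensions are always separably generated), and your one-line justification that ``only finitely many elements of $\kappa$ obstruct separable generation'' is not a proof; this is the one step you would need to cite or argue carefully, and it is exactly the step the paper's downward reduction avoids. The remaining transfers in your argument check out: $\widetilde E_0=E_0\kappa'$ is finite purely inseparable over $E_0$, hence pythagorean by \Cref{inseparable}~(1); $E_0$ is the maximal subfield of $\widetilde E_0$ separable over $E$, so \Cref{separable-inseparable}~(2) (or simply the fact that lifting a Galois extension along any base extension yields a Galois extension) gives that $\widetilde E_0/\widetilde E$ is Galois; and \Cref{sep-purely-insep} transfers hereditary non-pythagoreanness back to $E_0$. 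Your opening observation that a quadratically closed field of characteristic $\neq 2$ is automatically pythagorean is a small simplification over the paper, which instead carries both properties in parallel through \Cref{separable-inseparable}~(3).
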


\begin{proof}
One can find a subfield $k\subset E$ such that $E$ is an algebraic function field in one variable over $k$.  
There is a subfield $E' \subseteq E$ such that $E'$ is a finite separable
extension of a rational function field of $k$ and $E$ is a finite purely 
inseparable extension of $E'$.  
Let $E'_0$ be the maximal subfield of $E_0$ that is separable over $E'$.
By \Cref{separable-inseparable}, $E_0 = E E'_0$, $E'_0/E'$ is 
Galois, and $E'_0, E_0$ are  both pythagorean or both quadratically closed.
By \Cref{valuation splitting corollary} and \Cref{test2},
$E'_0$ is hereditarily non-pythagorean.

Since $E/E'$ is purely inseparable, it follows that $E_0/E'_0$ is 
purely inseparable by \Cref{separable-inseparable} (1).
Thus $E_0$ is hereditarily non-pythagorean
by \Cref{sep-purely-insep}.
\end{proof}

\begin{corollary}\label{finitely generated transcendental}
Let $E$ be a field  of characteristic different from $2$ that is finitely generated of transcendence degree at least one over a subfield.  
Then $E_{\mathrm{pyth}}$ and $E_{\mathrm{quad}}$  are both hereditarily non-pythagorean.
\end{corollary}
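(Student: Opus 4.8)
The plan is to deduce this corollary directly from \Cref{main theorem II}, applied once with $E_0 = E_{\mathrm{pyth}}$ and once with $E_0 = E_{\mathrm{quad}}$. The standing hypothesis on $E$—characteristic different from $2$ and finitely generated of transcendence degree at least one over a subfield—is verbatim the hypothesis of \Cref{main theorem II}. So it suffices to check, for each of the two closures taken as $E_0$, the two remaining hypotheses of that theorem: that $E_0/E$ is a Galois extension, and that $E_0$ is either pythagorean or quadratically closed.

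The second of these is immediate from the definitions recalled in the introduction: $E_{\mathrm{pyth}}$ is pythagorean and $E_{\mathrm{quad}}$ is quadratically closed. Thus in either case the disjunction appearing in \Cref{main theorem II} is satisfied, and no further work is needed for this point. For the first condition, I would invoke the standard fact that both the pythagorean closure and the quadratic closure of a field of characteristic different from $2$ are Galois over the base field. This is exactly what underlies \Cref{corollary of number fields}, where the references \cite[Chapter VII, pp.~219--220]{La-05} and \cite[Chapter VIII, p.~258]{La-05} are cited; those arguments do not use that the base field is a number field and apply equally to our $E$.

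With both hypotheses of \Cref{main theorem II} verified in each case, the theorem applies and yields that $E_{\mathrm{pyth}}$ and $E_{\mathrm{quad}}$ are each hereditarily non-pythagorean. The proof is therefore essentially a direct application, and the only non-formal point—the Galois property of the two closures—is the step I expect to require care; however, since it is covered entirely by the cited literature, it poses no real obstacle here.
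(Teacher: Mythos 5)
Your proposal is correct and matches the paper's own proof: the paper likewise deduces the corollary by applying \Cref{main theorem II} to $E_0 = E_{\mathrm{pyth}}$ and $E_0 = E_{\mathrm{quad}}$, citing exactly the same references in \cite{La-05} (via \Cref{corollary of number fields}) for the fact that both closures are Galois over $E$. Nothing further is needed.
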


\begin{proof}
As in \Cref{corollary of number fields}, the
result follows immediately from \Cref{main theorem II} because
$E_\mathrm{pyth}$ and $E_\mathrm{quad}$ are both Galois extensions of $E$.
\end{proof}

The following theorem gives a unified statement of our previous results in sections 3 and 5 for the case of a field that is finitely generated over its prime field. Recall that the term global field denotes either a number field or a function field in one variable over a finite field.

\begin{theorem}\label{unified} Let $E$ be a field  of characteristic different from $2$ that is finitely generated over its prime field. If $E$ contains a global field, then $E_{\mathrm{pyth}}$ and $E_{\mathrm{quad}}$  are both hereditarily non-pythagorean. Otherwise, $E$ is a finite field and $E_{\mathrm{pyth}}$ is hereditarily pythagorean.
\end{theorem}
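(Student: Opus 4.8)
The plan is to organize the proof around the observation that, for $E$ finitely generated over its prime field with $\cha E \neq 2$, the field $E$ contains a global field if and only if $E$ is not a finite field; once this is checked, each half of the statement reduces to results already established. I would first dispose of this dichotomy by a transcendence-degree count. If $\cha E = 0$, the prime field $\mathbb{Q}$ is itself a number field and hence a global field contained in $E$, while $E$ is infinite, so $E$ contains a global field and is not finite. If $\cha E = p$ (necessarily odd) and $E$ has transcendence degree at least one over $\mathbb{F}_p$, then $E$ contains a rational function field $\mathbb{F}_p(t)$, which is a global field. If instead $\cha E = p$ and $E$ has transcendence degree $0$, then $E$ is a finite algebraic extension of $\mathbb{F}_p$, i.e. a finite field; it contains no global field, since every global field of positive characteristic has transcendence degree one over $\mathbb{F}_p$, whereas any subfield of a finite field is again finite.

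For the first assertion I would treat the two remaining shapes of $E$ separately. If $E$ has transcendence degree at least one over its prime field, then $E$ is finitely generated of transcendence degree at least one over a subfield (its prime field), and \Cref{finitely generated transcendental} directly yields that $E_{\mathrm{pyth}}$ and $E_{\mathrm{quad}}$ are both hereditarily non-pythagorean. Otherwise $\cha E = 0$ and $E$ is a number field; then both $E_{\mathrm{pyth}}$ and $E_{\mathrm{quad}}$ are pythagorean (the latter because a quadratically closed field is pythagorean) and both are Galois over the number field $E$, so \Cref{main theorem}, applied exactly as in the proof of \Cref{corollary of number fields}, gives that each of $E_{\mathrm{pyth}}$ and $E_{\mathrm{quad}}$ is hereditarily non-pythagorean. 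Note that this upgrades the conclusion recorded for $E_{\mathrm{quad}}$ in \Cref{corollary of number fields}, but is already delivered by the method there.

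The substance of the theorem lies in the finite field case. Here $E = \mathbb{F}_q$ with $q$ odd is nonreal, so $E_{\mathrm{pyth}} = E_{\mathrm{quad}}$ is the quadratic closure of $\mathbb{F}_q$; since the unique quadratic extension of $\mathbb{F}_{q^{2^n}}$ is $\mathbb{F}_{q^{2^{n+1}}}$, iterating quadratic extensions produces only the tower $\mathbb{F}_q \subset \mathbb{F}_{q^2} \subset \mathbb{F}_{q^4} \subset \cdots$, whence $E_{\mathrm{pyth}} = \bigcup_{n \geq 0}\mathbb{F}_{q^{2^n}}$. I would then identify its absolute Galois group: under the canonical isomorphism $\gal(\overline{\mathbb{F}_q}/\mathbb{F}_q) \cong \widehat{\mathbb{Z}} = \prod_\ell \mathbb{Z}_\ell$, the closed subgroup fixing $E_{\mathrm{pyth}}$ is exactly the pro-odd part $\prod_{\ell \neq 2}\mathbb{Z}_\ell$, because an element fixes $E_{\mathrm{pyth}}$ precisely when its $2$-adic component vanishes.

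The key point is then immediate: any finite extension $M$ of $E_{\mathrm{pyth}}$ corresponds to an open subgroup $H$ of $\prod_{\ell \neq 2}\mathbb{Z}_\ell$, and $H$, being a closed subgroup of a pro-odd group, is itself pro-odd, so $H$ has no open subgroup of index $2$. Hence $M$ has no quadratic extension, i.e. $M$ is quadratically closed and therefore pythagorean. Thus every finite extension of $E_{\mathrm{pyth}}$ is pythagorean, which is precisely the assertion that $E_{\mathrm{pyth}}$ is hereditarily pythagorean. I expect this finite field case to be the main obstacle: unlike the first assertion, which is essentially bookkeeping on top of \Cref{finitely generated transcendental}, \Cref{corollary of number fields}, and \Cref{main theorem}, here one must first pin $E_{\mathrm{pyth}}$ down concretely enough to describe all of its finite extensions and then exploit the pro-odd structure of its absolute Galois group. (An elementary alternative avoids profinite groups: every finite extension of $E_{\mathrm{pyth}}$ has the form $E_{\mathrm{pyth}} \cdot \mathbb{F}_{q^m}$ with $m$ odd, equals $\bigcup_{n}\mathbb{F}_{q^{2^n m}}$, and is seen to be quadratically closed by the same computation showing that each element of $\mathbb{F}_{q^{2^n m}}$ becomes a square in $\mathbb{F}_{q^{2^{n+1} m}}$.)
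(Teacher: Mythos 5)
Your proof is correct, and it follows the same case decomposition as the paper (characteristic zero versus odd characteristic, transcendence degree at least one versus zero), with the transcendental cases handled identically via \Cref{finitely generated transcendental}. The differences are in the other two cases. For the finite field case, the paper disposes of the claim in one line by citing \cite[Chapter VII, Theorem 7.15]{La-05}, which applies because a finite field of odd characteristic has exactly two square classes; you instead prove it from scratch, identifying $E_{\mathrm{pyth}}=E_{\mathrm{quad}}=\bigcup_{n\geq 0}\mathbb{F}_{q^{2^n}}$ and showing via the pro-odd absolute Galois group $\prod_{\ell\neq 2}\mathbb{Z}_\ell$ (or via your elementary alternative that every element of $\mathbb{F}_s$ becomes a square in $\mathbb{F}_{s^2}$) that every finite extension has odd degree and is therefore quadratically closed. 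Your route is longer but self-contained, and it makes explicit the structural reason behind Lam's theorem in this instance. For the number field case, you spotted something the paper glosses over: the paper's proof cites \Cref{corollary of number fields}, whose statement only records that $E_{\mathrm{quad}}$ is hereditarily non-quadratically closed, which is strictly weaker than the claim of \Cref{unified} that $E_{\mathrm{quad}}$ is hereditarily non-pythagorean; your fix --- applying \Cref{main theorem} directly to $E_{\mathrm{quad}}$, which is pythagorean (being quadratically closed) and Galois over $E$ --- is exactly what is needed and is what the corollary's own proof method delivers. So your write-up is, if anything, slightly more careful than the paper's at that point.
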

\begin{proof}
If $\cha(E)=0$, then this follows immediately from \Cref{finitely generated transcendental} when $E$ has transcendence degree at least one over $\mathbb{Q}$,
and if it has transcendence degree $0$ over $\mathbb{Q}$ it follows from \Cref{corollary of number fields}.

Now assume $\cha(E)=p > 0$ for some $p \neq 2$. If $E$ has transcendence degree at least one over $\mathbb{F}_p$, the result follows from \Cref{finitely generated transcendental}. If $E$ is a finite field then $E_{\mathrm{pyth}}=E_{\mathrm{quad}}$ is hereditarily quadratically closed by \cite[Chapter VII, Theorem 7.15]{La-05}, since all finite fields have exactly two square classes.
\end{proof}

\noindent
Remark: In the statement of \Cref{unified}, we can replace ``finitely generated over its prime field'' by ``finite extension over any purely transcendental extension of its prime field''. We can also replace $E_{\mathrm{pyth}}$ or $E_{\mathrm{quad}}$ by an arbitrary Galois extension of $E$ that is pythagorean or quadratically closed, using directly \Cref{main theorem} and \Cref{main theorem II} in the proof.

\section{Some results for infinite algebraic extensions of number fields and function fields}

The next result extends \Cref{main theorem} and \Cref{main theorem II}.

\begin{theorem}\label{new}
Let $F$ be either a number field or a field of characteristic different from $2$ that is finitely generated of transcendence degree at least one over a subfield. 
Let $E/F$ be an infinite Galois extension.  
Then $E_{\mathrm{pyth}}$ and $E_{\mathrm{quad}}$  are both hereditarily non-pythagorean.
\end{theorem}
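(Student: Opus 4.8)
The plan is to reduce the statement to the two main theorems already established, \Cref{main theorem} and \Cref{main theorem II}, by showing that forming the pythagorean or quadratic closure of the infinite Galois extension $E$ preserves Galois-ness over the ground field $F$. That is, I would first prove that $E_{\mathrm{pyth}}/F$ and $E_{\mathrm{quad}}/F$ are both Galois extensions; once this is in hand the desired conclusion is essentially immediate, since $E_{\mathrm{pyth}}$ is pythagorean, $E_{\mathrm{quad}}$ is quadratically closed and hence (being nonreal of characteristic different from $2$) also pythagorean, and both are Galois over $F$. Note that the earlier theorems only require the pythagorean/quadratically closed field to be Galois over $F$; they do not care that it arises as a closure of the intermediate field $E$, so the origin of $E_0 \in \{E_{\mathrm{pyth}}, E_{\mathrm{quad}}\}$ becomes irrelevant once Galois-ness is secured.

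The main obstacle, and the only genuinely new point, is the Galois claim: towers of Galois extensions are not Galois in general, so the facts that $E/F$ is Galois and that $E_{\mathrm{pyth}}/E$ (respectively $E_{\mathrm{quad}}/E$) is Galois do not by themselves yield that $E_{\mathrm{pyth}}/F$ is Galois. I would resolve this by exploiting that the pythagorean and quadratic closures are canonically determined by $E$ inside a fixed algebraic closure $\overline{F}$. Fix $\sigma \in \gal(\overline{F}/F)$. Since $E/F$ is normal, $\sigma(E)=E$. Given any tower $E = L_0 \subseteq L_1 \subseteq \cdots \subseteq L_m$ with $L_{i+1} = L_i(\sqrt{a_i})$ and $a_i$ a sum of two squares in $L_i$, applying $\sigma$ gives $\sigma(L_{i+1}) = \sigma(L_i)(\sqrt{\sigma(a_i)})$, where $\sigma(a_i) = \sigma(x)^2 + \sigma(y)^2$ is again a sum of two squares in $\sigma(L_i)$; by induction $\sigma$ carries such a tower over $E$ to another such tower over $E$, so $\sigma(E_{\mathrm{pyth}}) = E_{\mathrm{pyth}}$. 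The same computation, with arbitrary elements in place of sums of two squares, gives $\sigma(E_{\mathrm{quad}}) = E_{\mathrm{quad}}$. Hence both extensions are stable under $\gal(\overline{F}/F)$, so every $F$-embedding into $\overline{F}$ has image equal to the field itself; that is, both are normal over $F$. They are also separable over $F$, since $E/F$ is separable (being Galois) and each quadratic step is separable as $\cha F \neq 2$. Thus $E_{\mathrm{pyth}}/F$ and $E_{\mathrm{quad}}/F$ are Galois.

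With the Galois property established, I would finish by applying the earlier results directly with base field $F$ and $E_0 \in \{E_{\mathrm{pyth}}, E_{\mathrm{quad}}\}$. When $F$ is a number field, $E_0$ is an algebraic extension of $\mathbb{Q}$ that is Galois over the number field $F$ and pythagorean, so \Cref{main theorem}---whose hereditary conclusion rests on \Cref{test} together with \Cref{global sums of squares}, exactly as in \Cref{corollary of number fields}---shows that $E_0$ is hereditarily non-pythagorean. When $F$ is finitely generated of transcendence degree at least one over a subfield, $E_0/F$ is a Galois extension of such a field that is pythagorean or quadratically closed, so \Cref{main theorem II} applies verbatim and gives the same conclusion. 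In both cases this is precisely the assertion that $E_{\mathrm{pyth}}$ and $E_{\mathrm{quad}}$ are hereditarily non-pythagorean, completing the proof.
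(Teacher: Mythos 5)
Your proposal is correct and takes essentially the same route as the paper: the paper's proof also reduces to \Cref{main theorem} and \Cref{main theorem II} by first showing $\varphi(E_{\mathrm{pyth}})=\varphi(E)_{\mathrm{pyth}}=E_{\mathrm{pyth}}$ and $\varphi(E_{\mathrm{quad}})=\varphi(E)_{\mathrm{quad}}=E_{\mathrm{quad}}$ for every $\varphi\in\gal(F^{\mathrm{sep}}/F)$, which is exactly the Galois-stability your tower argument spells out in detail. Your explicit induction on towers (plus the separability remark) simply fills in what the paper labels ``straightforward to verify,'' so there is nothing substantively different between the two arguments.
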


\begin{proof}
We first show that $E_{\mathrm{pyth}}$ and $E_{\mathrm{quad}}$  are Galois extensions of $F$. 
Let $\varphi \in \mathop{Gal}(F^{\mathrm{sep}}/F)$. 
We claim that $\varphi(E_{\mathrm{pyth}}) = E_{\mathrm{pyth}}$ and $\varphi(E_{\mathrm{quad}}) = E_{\mathrm{quad}}$.
It is straightforward to verify that $\varphi(E_{\mathrm{pyth}}) = \varphi(E)_\mathrm{pyth}$ and $\varphi(E_{\mathrm{quad}}) = \varphi(E)_\mathrm{quad}$ .  Since $E/F$ is a Galois extension, we have
that $\varphi(E) = E$.  Thus $\varphi(E_{\mathrm{pyth}})=E_\mathrm{pyth}$ and $\varphi(E_{\mathrm{quad}})=E_\mathrm{quad}$.

The result now follows from \Cref{main theorem} and \Cref{main theorem II}.
\end{proof}

\begin{example}\label{example}
Let $F$ be a field  of characteristic different from $2$ that is not separably algebraically closed  or real closed and let $F^\mathrm{sep}$ be its separable algebraic closure.  Then there exists
an automorphism $\varphi \in \mathop{Gal}(F^\mathrm{sep}/ F)$ 
having infinite order.
Let $E$ be the fixed field of $\varphi$.  Then $\mathop{Gal}(F^\mathrm{sep}/ E)$  is the pro-finite closure of the cyclic group generated by $\varphi$,
which is isomorphic to $\widehat{\mathbb{Z}}$, the inverse limit of all $ \Z / n\Z$.
Suppose that $E$ is a real field.  Then $E$ would be contained in a real closure.
However, $\widehat{\mathbb{Z}}$ does not contain an element of order two. 
Thus $E$ is a nonreal field.
Moreover $E$ is not quadratically closed, since $\widehat{\mathbb{Z}}$ contains a subgroup of index $2$.
$E_{\mathrm{quad}}$ is a Galois extension of $E$.
Then 
$$\mathop{Gal}(F^\mathrm{sep}/ E_{\mathrm{quad}} ) \cong 
\bigoplus_{\substack{\text{primes}\\ p \neq 2}} \Z_p$$
It follows that every finite extension of $E_{\mathrm{quad}} $ has odd degree. In particular, $E_{\mathrm{quad}} $ is hereditarily quadratically closed. 

If $F$ is either a number field or a field that is finitely generated of 
transcendence degree at least one over a subfield, 
then \Cref{new} implies that $E$ is not a Galois extension of $F$.
\end{example}

\begin{proposition}\label{infinite but perhaps not Galois}
Let $E/\mathbb{Q}$ be a (possibly infinite) algebraic extension and assume that 
$E$ is real but not pythagorean. 
Then no quadratic extension of $E_{pyth}$ is pythagorean.
\end{proposition}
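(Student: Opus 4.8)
The plan is to fix a quadratic extension $M=F(\sqrt d)$ of $F:=E_{\mathrm{pyth}}$ and to show it is not pythagorean. Since $M$ is an algebraic extension of $\mathbb{Q}$ containing the pythagorean field $F$, the first part of \Cref{main theorem} gives $p(M)\le 2$, so $\sum M^2=M^2+M^2$; hence it suffices to produce a sum of two squares in $M$ that is not a square, i.e. (by Artin's theorem, every totally positive element being a sum of squares) a totally positive element of $M$ that is not a square. Throughout I would use that for the pythagorean field $F$ one has $F^{\times2}=\bigcap_P P^{\times}$, the set of totally positive elements, and that the same would hold for $M$ if $M$ were pythagorean. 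The hypothesis that $E$ is not pythagorean furnishes, after scaling, an element $a=1+t^2\in E\setminus E^2$ with $t\in E$; since $a\in\sum E^2\subseteq F^2$, we have $s:=\sqrt a\in F$, and $E(\sqrt a)\subsetneq F$.

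First I would record that $F$ has at least two orderings. Indeed $a$ is totally positive in $E$, so every ordering of $E$ extends to exactly two orderings of the real quadratic extension $E(\sqrt a)$ (differing in the sign of $s$); as every further step in the tower $E(\sqrt a)\subseteq\cdots\subseteq F$ adjoins a square root of a totally positive element, all these orderings extend to $F$. Thus $|X_F|\ge 2$, and $s$ is an element of $F$ with both $\{s>0\}$ and $\{s<0\}$ nonempty. This immediately settles the nonreal quadratic extension $M=F(\sqrt{-1})$: a nonreal pythagorean field is quadratically closed, and if $F(\sqrt{-1})$ were quadratically closed then every $x\in F^{\times}$ would lie in $F^{\times2}\cup(-1)F^{\times2}$, forcing $[F^{\times}:F^{\times2}]=2$ and hence (as $F^{\times2}$ consists of the totally positive elements) a single global sign, i.e. a unique ordering on $F$; this contradicts $|X_F|\ge2$. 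So $F(\sqrt{-1})$ is not pythagorean.

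For a real quadratic extension $M=F(\sqrt d)$ I would argue by contradiction. If $M$ were pythagorean, then every totally positive element of $M$ would be a square; restricting to $F$, every $x\in F^{\times}$ that is positive on the nonempty clopen set $U:=\{P\in X_F: d>_P 0\}$ would have to be a square in $M=F(\sqrt d)$, that is $x\in F^{\times2}\cup dF^{\times2}$. Writing $U^{c}=\{P:d<_P 0\}$ (nonempty, since the nonsquare $d$ is not totally positive), this says that every square class of $F$ positive on $U$ is either totally positive or has exactly the sign of $d$ throughout $U^{c}$. To reach a contradiction I would exhibit an $x\in F^{\times}$ that is positive on $U$ but whose sign is nonconstant on $U^{c}$, using $s=\sqrt a$ together with the further totally positive square roots generating $F$ to separate two orderings lying inside $U^{c}$.

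The main obstacle is exactly this last separation: it amounts to showing that the space of orderings $X_F$ of the pythagorean closure is rich enough that no nonsquare $d$ has $U^{c}$ ``indivisible'' by square classes (equivalently, that $X_F$ has no isolated point cut out by a single square class). I expect to obtain this from two inputs. First, since $E$ is not pythagorean the splitting tower defining $F$ is infinite and each step genuinely doubles orderings, so $X_F$ has no isolated points and the classes $\sqrt b$ ($b$ totally positive) separate points densely. Second, the competing source of rigidity, namely a valuation giving $d$ odd value together with a real closed residue field, cannot occur here: because $E$ is algebraic over $\mathbb{Q}$, every nonarchimedean valuation on $F$ has residue field of positive characteristic, hence nonreal and never real closed, and because $F$ is pythagorean \Cref{nonreal valuation} forces every such value group to be $2$-divisible, so no element of $F$ (nor of $M$) can have odd value. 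Thus rigidity can only come from the orderings, which the first input excludes, so the separating $x$ exists and contradicts the displayed alternative; hence $M$ is not pythagorean.
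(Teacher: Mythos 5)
Your reduction is sound up to its final step, but that final step is the heart of the proposition and you do not actually carry it out. For a real quadratic extension $M=F(\sqrt d)$ of $F=E_{\mathrm{pyth}}$ you correctly reduce to producing $x\in F^{\times}$ that is positive on $U=\{P\in X_F: d>_P 0\}$ but changes sign on $U^{c}$; then "I expect to obtain this" and "so the separating $x$ exists" stand in for the proof. The two inputs you cite do not suffice as stated. Non-isolation of points of $X_F$ (which is provable, and gives $\lvert U^{c}\rvert\ge 2$) does not yield an element positive on \emph{all} of $U$ that separates two points of $U^{c}$: that is a relative, SAP-type separation property, strictly stronger than "square classes separate points", and it genuinely fails for some real pythagorean fields. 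For example, in $F_0=\mathbb{R}(\!(x)\!)(\!(y)\!)$ with $d=x$, no square class positive on $\{x>0\}$ splits $\{x<0\}$ (the four orderings form a fan), and correspondingly $F_0(\sqrt{x})$ \emph{is} pythagorean. So arithmetic input from $\mathbb{Q}$ is indispensable exactly at this point, and your second input (no valuations with real residue field, $2$-divisible value groups) only converts into the needed separation via the Br\"ocker--Prestel theory of fans (a field is SAP if and only if it has no $4$-element fan, fans being trivialized by compatible valuations with real residue field) --- a substantial theorem you neither prove nor invoke precisely.

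The paper closes this gap with an elementary trick your outline misses: since $d$ lies in a finite stage of the tower and every ordering of that stage extends in infinitely many ways to $F$, the set $U^{c}$ is infinite, and one can choose two of its orderings restricting to \emph{distinct} orderings $<_1,<_2$ of a number field $E'$ with $d\in E'$. Weak approximation in $E'$ then gives $x,y\in E'$ such that $\beta=x^{2}+y^{2}d$ satisfies $\beta>_1 0$ and $\beta<_2 0$. The specific shape $x^{2}+y^{2}d$ makes $\beta$ automatically positive at every ordering where $d$ is positive, hence on all of $U$, and automatically a sum of two squares in $M$; so no global separation property of $X_F$ is needed, only weak approximation in a number field. (Your outline could alternatively be repaired by proving that every algebraic extension of $\mathbb{Q}$ is SAP --- clopen subsets of $X_F$ descend to a finite stage by compactness, and number fields are SAP by weak approximation --- but as written the key step is a gap, not a proof.) A minor further point: your case split tacitly assumes the only nonreal quadratic extension of $F$ is $F(\sqrt{-1})$; this is true, since if $F(\sqrt d)$ is nonreal then $-d$ is totally positive and hence a square in the pythagorean field $F$, but it should be said.
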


\begin{proof}
Let $L=E_{pyth}(\sqrt{\alpha})$ be a quadratic extension of $E_{pyth}$ . 
Note that $\alpha$ is negative with respect to at least one ordering of $E_{pyth}$.
Since $E_{pyth}/E$ is an infinite totally real extension (every ordering of $E$ extends to $n$ orderings in every finite extension of degree $n$ in $E_{pyth}$), and since the element $\alpha$ is defined over a finite extension of $E$ in 
$E_{pyth}$, we conclude that $\alpha$ is negative at an infinite number of orderings.

Let $E'/\mathbb{Q}$ be a finite extension contained in $E_{pyth}$ such that 
$\alpha \in E'$ and such that  two of the orderings of $E_{pyth}$ with respect to which $\alpha$ is negative, restrict to two distinct orderings $<_1$ and $<_2$ on $E'$. Note that $L'=E'(\sqrt{\alpha})$ is a quadratic extension contained in $L$. 
 
By weak approximation in the number field $E'$, there exist $x,y \in E'$ such that for $\beta:=x^2 +  y^2\alpha$, we have $\beta >_1 0 $ and $\beta <_2 0$.
As a consequence, we have that both $\beta$ and $\beta \alpha$ are not totally positive in $E_{pyth}$. Hence 
\[\beta \notin (E_{pyth}^{\times})^2 \cup \alpha (E_{pyth}^{\times})^2 = 
E_{pyth} \cap L^2.\]
This shows that $\beta \notin L^2$.  Since $\beta = x^2 + y^2 \alpha$
and $\alpha \in L^2$, it follows that $\beta \in \sum L^2$.  
Hence $L$ is not pythagorean.
\end{proof}

\begin{theorem}\label{intermediate quadratic}
Let $E/\mathbb{Q}$ be a (possibly infinite) algebraic extension and assume 
that $E$ is real but not pythagorean.  
Let $L/E_{pyth}$ be a finite proper extension, and assume it can be written as a chain of 
field extensions in which one intermediate extension is a quadratic extension. 
Then $L$ is not pythagorean.
\end{theorem}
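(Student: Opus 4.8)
The plan is to reduce, via the Diller--Dress result \Cref{Diller-Dress}, to a single quadratic step and then adapt the signature argument of \Cref{infinite but perhaps not Galois}. Write the given chain as $E_{pyth} = F_0 \subsetneq F_1 \subsetneq \cdots \subsetneq F_m = L$ with $[F_{j+1}:F_j] = 2$, say $F_{j+1} = F_j(\sqrt{\alpha})$ with $\alpha \in F_j$. Since $F_{j+1} \subseteq L$ and $L/F_{j+1}$ is finite, \Cref{Diller-Dress} shows it suffices to prove that $F_{j+1}$ is not pythagorean. If $F_j$ is not pythagorean, then $F_{j+1}$ is not pythagorean by \Cref{Diller-Dress} again, so I may assume $F_j$ is pythagorean. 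As $F_{j+1}/F_j$ is a proper quadratic extension, $F_j$ is not quadratically closed, hence not nonreal; so $F_j$ is a real pythagorean field, and by Artin's theorem its squares are exactly its totally positive elements.

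I then split on whether $F_{j+1}$ is real. If $F_{j+1}$ is nonreal, it is pythagorean iff quadratically closed; but any quadratically closed field containing $E_{pyth}$ contains its quadratic closure $(E_{pyth})_{\mathrm{quad}}$, and I would show $[(E_{pyth})_{\mathrm{quad}}:E_{pyth}] = \infty$, so that no field finite over $E_{pyth}$ is quadratically closed. The infinitude of the quadratic closure follows from $E_{pyth}$ having infinitely many square classes: the orderings of $E_{pyth}$ are archimedean, since $E_{pyth}/\mathbb{Q}$ is algebraic, so any $n$ distinct orderings become distinct on some number field $E'' \subseteq E_{pyth}$, and weak approximation in $E''$ realizes all $2^n$ sign patterns, giving at least $2^n$ square classes for every $n$.

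If $F_{j+1}$ is real, I would repeat the construction of \Cref{infinite but perhaps not Galois} with $F_j$ in place of $E_{pyth}$: choose a number field $E' \ni \alpha$ and $x,y \in E'$ so that $\beta = x^2 + y^2\alpha$ is negative at one ordering of $F_j$ and $\beta\alpha$ is negative at another. Then $\beta$ and $\beta\alpha$ are both not totally positive, so $\beta \notin F_j^2 \cup \alpha F_j^2$; since $\beta = x^2 + (y\sqrt{\alpha})^2 \in \sum F_{j+1}^2$ while no element of $F_j$ outside $F_j^2 \cup \alpha F_j^2$ is a square in $F_{j+1} = F_j(\sqrt{\alpha})$, this exhibits $\beta \in \sum F_{j+1}^2 \setminus F_{j+1}^2$ and shows $F_{j+1}$ is not pythagorean. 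As in the sign analysis, both orderings used must be ones at which $\alpha$ is negative.

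The main obstacle is exactly this last point: I must produce two distinct orderings of $F_j$ at which $\alpha$ is negative. One exists because $\alpha$ is a non-square in the pythagorean field $F_j$. A second need not exist for a general real pythagorean field---for instance, in $\mathbb{R}(\!(t)\!)$ the element $t$ is negative at only one of the two orderings---so the algebraicity of $E_{pyth}$ over $\mathbb{Q}$ must enter here. I would argue by contradiction: if $\alpha$ were negative at a unique ordering $P_1$ of $F_j$, then a sign count on the norm $\delta = N_{F_j/E_{pyth}}(\alpha) \in E_{pyth}$ (the real embeddings of $F_j$ over a given ordering of $E_{pyth}$ contribute the product of the signs of $\alpha$, while complex pairs contribute positively) shows $\delta$ is negative at $P_1|_{E_{pyth}}$ and positive at every other ordering of $E_{pyth}$, making $\{P_1|_{E_{pyth}}\}$ a clopen point of the space of orderings of $E_{pyth}$. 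But $E_{pyth}$ has no isolated orderings: for any finite $E''$ with $E \subseteq E'' \subseteq E_{pyth}$ one has $E_{pyth} = (E'')_{pyth}$, so $E_{pyth}/E''$ is infinite and totally real, whence every nonempty Harrison set in the space of orderings of $E_{pyth}$ is infinite. This contradiction supplies the required second ordering and completes the proof.
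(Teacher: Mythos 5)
Your proof is correct, and while its skeleton matches the paper's --- reduce via \Cref{Diller-Dress} to a single quadratic step $F_{j+1}=F_j(\sqrt{\alpha})$ with $F_j \supseteq E_{pyth}$ pythagorean (hence real), then split on whether $F_{j+1}$ is real --- both cases are settled by genuinely different means. For the nonreal case the paper quotes \cite[Chapter VIII, Corollary 5.11]{La-05} to conclude $E_{pyth}$ would be euclidean, hence uniquely ordered, contradicting its infinitely many orderings; you instead observe that a quadratically closed field finite over $E_{pyth}$ would contain $(E_{pyth})_{\mathrm{quad}}$, and rule that out with an infinite square-class count coming from sign patterns: same underlying fact, different packaging. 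The substantive divergence is the real case. The paper never argues on $F_j$ directly: it picks a primitive element $\beta$ of $F_j/E_{pyth}$ and a finite extension $\widetilde{E}$ of $E$ inside $E_{pyth}$ so as to write $F_j = \widetilde{F}_{pyth}$ for a real, non-pythagorean field $\widetilde{F}=\widetilde{E}(\beta) \ni \alpha$ algebraic over $\mathbb{Q}$, and then applies \Cref{infinite but perhaps not Galois} verbatim, whose proof produces the two needed orderings by lifting the one ordering of $\widetilde{F}$ where $\alpha<0$ through the infinite totally real extension $\widetilde{F}_{pyth}/\widetilde{F}$. You instead produce the two orderings on $F_j$ itself: if $\alpha$ were negative at a unique ordering, your sign count on $N_{F_j/E_{pyth}}(\alpha)$ (real embeddings contribute signs, conjugate pairs contribute positively) would cut out a singleton Harrison set in the space of orderings of $E_{pyth}$, contradicting that every nonempty Harrison set there is infinite, since $E_{pyth}=(E'')_{pyth}$ is an infinite totally real extension of each finite $E''$ (such $E''$ cannot be pythagorean because $E$ is not). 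Your route isolates the operative property more cleanly --- no isolated orderings, which is exactly what fails for $\mathbb{R}(\!(t)\!)$ --- and avoids the direct-limit bookkeeping the paper needs to justify that $F_j=\widetilde{F}_{pyth}$ with $\widetilde{F}$ non-pythagorean; the paper's route is shorter because it reuses \Cref{infinite but perhaps not Galois} wholesale and requires neither norm computations nor ordering-space topology. One organizational nit: your Case A square-class count already presupposes that $E_{pyth}$ has infinitely many orderings, a fact you only justify in your final paragraph; state it up front (it is precisely the paper's observation that $E_{pyth}/E$ is infinite and totally real because $E$ is not pythagorean).
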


\begin{proof}
By \Cref{Diller-Dress}, we may assume that the quadratic extension is the 
last extension in the chain. Thus we can assume that 
$E_{pyth} \subseteq F_0 \subsetneq L$ where $L=F_0(\sqrt{\alpha})$ 
for some $\alpha \in F_0$. 

First suppose that $L$ is nonreal pythagorean (i.e. quadratically closed). 
Since $L/E_{pyth}$ is a finite extension and $E_{pyth}$
is a real field, it follows from \cite[Chapter VIII, Corollary 5.11]{La-05} 
that $E_{pyth}$ is euclidean.  
In particular, $E_{pyth}$ is uniquely ordered. 
However, since $E$ is not pythagorean, $E_{pyth}$ is an infinite extension by 
\Cref{Diller-Dress}, which is given by iteratively adjoining square roots of sums of squares, whereby $E_{pyth}$ has infinitely many orderings.  
This is a contradiction. 

Suppose now that $L$ is real pythagorean. 
Let $\widetilde{E}/ E$ be a finite extension in $E_{pyth}$ such that the 
minimal polynomial of some primitive element $\beta$ for 
$F_0/E_{pyth}$ is defined over $\widetilde{E}$ and such that 
$\alpha \in \widetilde{F}:= \tilde{E}(\beta)$.  
The second property is possible to arrange, since 
$F_0 = E_{pyth}(\beta)$ is the direct limit of $E'(\beta)$ where $E'$ runs 
through the finite extensions $E'$ inside $E_{pyth}$ of any $\widetilde{E}$ 
with the first property.

Since $F_0$ is also real pythagorean by \Cref{Diller-Dress}, 
we have that $F_0$ is the pythagorean closure of $\tilde{F}$ (which is not pythagorean).
We are now in the situation of \Cref{infinite but perhaps not Galois} where 
we only consider a quadratic extension of the pythagorean closure of a 
real number field that is not pythagorean.  We showed in 
\Cref{infinite but perhaps not Galois} that in this case $L$ is not pythagorean. Hence we have a contradiction also in this case.
\end{proof}

\noindent
In the next section, we prove some general results that imply the existence 
of real infinite number fields that are not pythagorean
and whose pythagorean closures are not hereditarily non-pythagorean.

\medskip

\section{Pythagorean closures admitting pythagorean finite extensions}

\begin{lemma}\label{first lemma}
Let $F/E$ be a finite extension of fields with $[F:E] = n$, $n$ odd.
The following statements are equivalent.
\begin{enumerate}
\item $\sum F^2 = \left(\sum E^2 \right) F^2$.
\item $\sum F^2 \subset EF^2$.
\item $N_{F/E}: (\sum F^2)^{\times} / (F^{\times})^2 \to (\sum E^2)^{\times} / (E^{\times})^2$ is injective.
\item $\alpha N_{F/E}(\alpha) \in F^2$ for all $\alpha \in \sum F^2$.
\end{enumerate}
\end{lemma}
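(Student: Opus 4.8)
The plan is to prove the four statements equivalent by establishing a cycle of implications, using the fact that $[F:E]=n$ is odd throughout (this is what makes the norm map behave well on square classes and makes the trace/transfer arguments go through). Since $n$ is odd, the canonical map $E^\times/(E^\times)^2 \to F^\times/(F^\times)^2$ is injective (the composite with the norm is the $n$-th power map, which is the identity on $2$-torsion when $n$ is odd), so I can freely regard square classes of $E$ as sitting inside square classes of $F$.

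First I would show $(1)\Leftrightarrow(2)$. The implication $(1)\Rightarrow(2)$ is immediate since $\left(\sum E^2\right)F^2 \subseteq EF^2$. For $(2)\Rightarrow(1)$, I would take $\sigma\in\sum F^2$ and write $\sigma = e f^2$ with $e\in E$, $f\in F$; then $e = \sigma f^{-2}\in\sum F^2$, and since $e\in E$ and $n$ is odd, an easy norm/signature argument (any element of $E$ that is a sum of squares in $F$ is a sum of squares in $E$, because $n$ odd forces every ordering of $E$ to extend to $F$) shows $e\in\sum E^2$, giving $\sigma\in\left(\sum E^2\right)F^2$.

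Next I would relate these to the norm map. For $(2)\Rightarrow(3)$: injectivity of $N_{F/E}$ on the displayed quotient means that if $\alpha\in(\sum F^2)^\times$ has $N_{F/E}(\alpha)\in(E^\times)^2$, then $\alpha\in(F^\times)^2$. Assuming $(2)$, write $\alpha = e f^2$ with $e\in E$; then $N_{F/E}(\alpha) = e^n N_{F/E}(f)^2$, and since $n$ is odd this lies in $(E^\times)^2$ iff $e\in(E^\times)^2$, whence $\alpha\in(F^\times)^2$. For $(3)\Rightarrow(4)$: I would use the standard square-class identity that $\alpha\, N_{F/E}(\alpha)$ always maps to a square under $N_{F/E}$ — indeed $N_{F/E}\left(\alpha\, N_{F/E}(\alpha)\right) = N_{F/E}(\alpha)\cdot N_{F/E}(\alpha)^n = N_{F/E}(\alpha)^{n+1}$, a square since $n+1$ is even — so by injectivity $\alpha\, N_{F/E}(\alpha)\in F^2$. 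Finally $(4)\Rightarrow(2)$: given $\alpha\in\sum F^2$, the hypothesis $\alpha\, N_{F/E}(\alpha)\in F^2$ rewrites as $\alpha\in N_{F/E}(\alpha)\,F^2\subseteq EF^2$, since $N_{F/E}(\alpha)\in E$.

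The main obstacle I anticipate is the $(2)\Rightarrow(1)$ step, specifically the claim that an element of $E$ which is a sum of squares in $F$ is already a sum of squares in $E$. This is where the oddness of $n$ is genuinely essential: it guarantees via the Knebusch/Springer-type behavior of orderings (every ordering of $E$ extends to $F$ when $[F:E]$ is odd) that the preordering $\sum E^2$ equals the contraction of $\sum F^2$. I would state this carefully, citing the extension-of-orderings fact and the characterization $\sum E^2 = \bigcap_P P$ over orderings $P$ of $E$, rather than attempt a direct algebraic manipulation. The norm computations in the remaining steps are routine once the parity of $n$ and $n+1$ is tracked, so I expect no difficulty there.
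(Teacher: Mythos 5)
Your proposal is correct in substance, and its computational core coincides with the paper's: the implications (2)$\Rightarrow$(3) and (3)$\Rightarrow$(4) are proved exactly as in the paper, via $N_{F/E}(ef^2)=e^n N_{F/E}(f)^2$ together with oddness of $n$, and via $N_{F/E}\bigl(\alpha N_{F/E}(\alpha)\bigr)=N_{F/E}(\alpha)^{n+1}\in (E^\times)^2$. Where you genuinely diverge is in closing the cycle. The paper proves the single loop (1)$\Rightarrow$(2)$\Rightarrow$(3)$\Rightarrow$(4)$\Rightarrow$(1), and in the last step it uses only the fact that $N_{F/E}(\alpha)\in\sum E^2$ whenever $\alpha\in\sum F^2$ --- a fact that is already required for the map in (3) to be well defined, so no further real-algebra input is imported. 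You instead prove (4)$\Rightarrow$(2), which needs only $N_{F/E}(\alpha)\in E$, and then close with (2)$\Rightarrow$(1) by invoking $E\cap\sum F^2=\sum E^2$ for odd-degree extensions, which you justify by the ordering-extension theorem (every ordering of $E$ extends to $F$ when $[F:E]$ is odd, a consequence of Springer's theorem) plus Artin's characterization $\sum E^2=\bigcap_P P$. This is valid, but it is a strictly heavier citation than anything the paper uses; what it buys you is that the step (4)$\Rightarrow$(2) becomes completely formal.

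There is one small but genuine gap: the lemma carries no hypothesis on the characteristic, and your (2)$\Rightarrow$(1) argument is the one place this matters. In characteristic $2$ there are no orderings and Artin's theorem is unavailable, while $\sum E^2=E^2$ may well be a proper subset of $E$, so the claim ``an element of $E$ that is a sum of squares in $F$ is a sum of squares in $E$'' cannot be obtained by your signature argument there. The paper sidesteps this by first disposing of characteristic $2$ entirely: in that case $\sum K^2=K^2$ for every field $K$, so all four statements hold trivially. You should either insert that same reduction at the outset (and note that for nonreal $E$ of characteristic $\neq 2$ your claim is trivial since $\sum E^2=E$), or observe that in characteristic $2$ the identity $E\cap F^2=E^2$ follows from a degree argument: if $f\in F$ and $f^2\in E$, then $[E(f):E]\le 2$ divides the odd number $n$, forcing $f\in E$. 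With that one-line patch your proof is complete.
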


\begin{proof} All statements are trivially true, and hence equivalent, when $\cha(E)=2$. We may therefore assume that $\cha(E)\neq 2$.

(1) $\Rightarrow$ (2).  This is trivial.

(2) $\Rightarrow$ (3). 
Assume that (2) holds and let $\alpha \in (\sum F^2)^{\times}$.
Then $\alpha = a \beta^2$ where $a \in E$ and $\beta \in F$.  
Assume that $N_{F/E}(\alpha) \in E^2$.
Then $N_{F/E}(\alpha) = a^n (N_{F/E}(\beta))^2 \in E^2$, which implies that
$a^n \in E^2$.  Since $n$ is odd, we have $a \in E^2$, and thus $\alpha \in F^2$.

(3) $\Rightarrow$ (4).
Assume that (3) holds.  Let $\alpha \in (\sum F^2)^{\times}$.
Then $N_{F/E}(\alpha N_{F/E}(\alpha) ) = N_{F/E}(\alpha)^{n+1} 
\in (E^{\times})^2$ because $n+1$ is even.  
Therefore $\alpha N_{F/E}(\alpha) \in F^2$.

(4) $\Rightarrow$ (1).
Assume that (4) holds and let $\alpha \in \sum F^2$.  
Then $N_{F/E}(\alpha) \in \sum E^2$ and 
$\alpha \in N_{F/E}(\alpha) F^2 \subset \left(\sum E^2 \right) F^2$.
Thus $\sum F^2 \subset \left( \sum E^2 \right) F^2$.  The other inclusion
is obvious.
\end{proof}

\begin{lemma}\label{second lemma}
Let $F/E$ be a finite extension of fields with $[F:E] = n$, $n$ odd.
Let $K = E(\sqrt{d})$ where $d \in \sum E^2$, and assume that $[K:E]=2$.
Let $L = KF = F(\sqrt{d})$.  Then $\sum F^2 \subset EF^2$ if and only if 
$\sum L^2 \subset KL^2$.
\end{lemma}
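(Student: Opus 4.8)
The plan is to reduce both sides to the injectivity criterion of \Cref{first lemma}(3) and then transport injectivity through the quadratic extension. We may assume $\cha E\neq 2$: in characteristic two $\sum X^2=X^2$ for every field $X$, so both displayed inclusions hold trivially and are vacuously equivalent. For a field $X$ write $G_X=X^\times/(X^\times)^2$ and $S_X=(\sum X^2)^\times/(X^\times)^2\subseteq G_X$. Since $2\nmid n$ we have $F\cap K=E$, and as $K/E$ is (separable) quadratic, $F$ and $K$ are linearly disjoint over $E$ with $L=F\otimes_E K$, so $[L:K]=n$ and $[L:F]=2$. Applying \Cref{first lemma} to $F/E$ and to $L/K$ (both of odd degree $n$), the condition $\sum F^2\subset EF^2$ is equivalent to injectivity of $N_{F/E}\colon S_F\to S_E$, and $\sum L^2\subset KL^2$ is equivalent to injectivity of $N_{L/K}\colon S_L\to S_K$; I will prove these two injectivity statements are equivalent.

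First I would assemble the tools. Linear disjointness gives $N_{L/K}(\alpha)=N_{F/E}(\alpha)$ for $\alpha\in F$, since multiplication by $\alpha$ on $L$ over $K$ has the same matrix as on $F$ over $E$; hence the base-change maps $r_{FL}\colon G_F\to G_L$ and $r_{EK}\colon G_E\to G_K$ satisfy $N_{L/K}\circ r_{FL}=r_{EK}\circ N_{F/E}$. A direct computation with $\kappa=a+b\sqrt d$ shows $\ker r_{EK}=\ker r_{FL}=\langle[d]\rangle$ (of order two, as $d$ is a nonsquare), and the same computation gives $(E^\times\cap(K^\times)^2)/(E^\times)^2=\langle[d]\rangle$. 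Because $d\in E$ and $n$ is odd, $N_{F/E}([d])=[d^n]=[d]\neq 1$ in $S_E$; and because $\sqrt d\in L$ we have $d\in(L^\times)^2$, so $r_{FL}([d])=1$.

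The backward direction is short. Assume $N_{L/K}$ is injective on $S_L$ and let $[x]\in S_F$ have $N_{F/E}([x])=1$. Then $N_{L/K}(r_{FL}[x])=r_{EK}(1)=1$, so $r_{FL}[x]=1$ and $[x]\in\langle[d]\rangle$; since $N_{F/E}([d])=[d]\neq 1$, the possibility $[x]=[d]$ is excluded, leaving $[x]=1$.

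The forward direction is the main obstacle, because $S_L$ is strictly larger than the image of $G_F$ (for instance $[2+\sqrt 2]\in S_{\Q(\sqrt 2)}$ is not in the image of $G_{\Q}$), so a class of $S_L$ cannot be descended to $F$ directly. Suppose $N_{F/E}$ is injective on $S_F$ and let $\gamma\in(\sum L^2)^\times$ satisfy $N_{L/K}(\gamma)\in(K^\times)^2$. The tower $N_{L/E}=N_{K/E}\circ N_{L/K}=N_{F/E}\circ N_{L/F}$ gives $N_{F/E}(N_{L/F}(\gamma))\in(E^\times)^2$, and since $N_{L/F}(\gamma)=\gamma\,\tau(\gamma)\in\sum F^2$ (with $\tau$ generating $\gal(L/F)$; the norm of a sum of squares is a sum of squares), injectivity forces $N_{L/F}(\gamma)\in(F^\times)^2$. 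The key input is the exactness of $G_F\xrightarrow{r_{FL}}G_L\xrightarrow{N_{L/F}}G_F$ (Hilbert~90 for $L/F$): the kernel of $N_{L/F}$ on square classes equals $r_{FL}(G_F)$, so $\gamma=\phi\mu^2$ with $\phi\in F^\times$ and $\mu\in L^\times$. As $\gamma$ is totally positive in $L$ and every ordering of $F$ extends to $L=F(\sqrt d)$ (because $d\in\sum F^2$ is totally positive), $\phi$ is totally positive in $F$, hence $\phi\in\sum F^2$ by Artin's theorem (this is automatic when $F$ is nonreal). Finally $N_{L/K}(\gamma)=N_{F/E}(\phi)\,N_{L/K}(\mu)^2\in(K^\times)^2$ yields $N_{F/E}(\phi)\in E^\times\cap(K^\times)^2$, so $N_{F/E}([\phi])\in\{1,[d]\}$ in $S_E$. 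Using $N_{F/E}([d])=[d]$ and injectivity of $N_{F/E}$ on $S_F$ (applied to $[\phi]$ and to $[\phi d]$), we get $[\phi]\in\{1,[d]\}$ in $S_F$, and in either case $[\gamma]=r_{FL}([\phi])=1$ because $r_{FL}([d])=1$. Hence $N_{L/K}$ is injective on $S_L$, completing the equivalence.
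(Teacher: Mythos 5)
Your proof is correct, and it reaches the lemma by a route that differs in substance from the paper's, even though both arguments rest on the same two external inputs: \Cref{first lemma} and the square-class exactness for quadratic extensions from \cite[Chapter VII, Theorem 3.8]{La-05}. You translate both inclusions into the injectivity criterion of \Cref{first lemma}(3) and transport injectivity through the commuting square $N_{L/K}\circ r_{FL}=r_{EK}\circ N_{F/E}$; in the hard direction you apply the exactness theorem to $\gamma$ itself (after forcing $N_{L/F}(\gamma)\in (F^{\times})^2$ via injectivity), descend positivity to write $\gamma=\phi\mu^2$ with $\phi\in\sum F^2$, and finish with a two-case analysis pinning $[\phi]$ down to $\{1,[d]\}$. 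The paper instead chases the inclusion $\sum L^2\subset KL^2$ directly: it applies the same exactness theorem not to $\beta$ but to the auxiliary element $\beta N_{L/K}(\beta)$, whose $L/F$-norm equals $\alpha N_{F/E}(\alpha)\in F^2$ by \Cref{first lemma}(4), then uses the identity $F\cap\sum L^2=\sum F^2$ (an elementary expansion, valid because $d\in\sum F^2$) to conclude $\beta N_{L/K}(\beta)\in\left(\sum F^2\right)L^2\subseteq EF^2L^2\subseteq KL^2$, and finally divides by $N_{L/K}(\beta)\in K^{\times}$. The main trade-off is that your descent step imports the theory of orderings — the criterion for orderings of $F$ to extend to $F(\sqrt d)$ together with Artin's theorem that totally positive elements are sums of squares — whereas the paper's argument is ordering-free, the identity $F\cap\sum L^2=\sum F^2$ playing exactly the role your positivity argument plays. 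What your version buys is structural clarity: the kernels $\{1,[d]\}$ of the restriction maps, the norm compatibility under linear disjointness, and a backward direction that is a one-line diagram chase; the cost is a longer forward direction and the appeal to Artin's theorem that the paper avoids.
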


\begin{proof}
First assume that $\sum F^2 \subset EF^2$.
Let $\beta \in \sum L^2$.  
Let $\alpha = N_{L/F}(\beta) \in \sum F^2$.  Then
\begin{align*}
N_{L/F}(\beta N_{L/K}(\beta))& = N_{L/F}(\beta) N_{K/E}(N_{L/K}(\beta)) \\
&= N_{L/F}(\beta) N_{F/E}(N_{L/F}(\beta)) = \alpha N_{F/E}(\alpha) \in F^2,
\end{align*}
by \Cref{first lemma}.  It follows that $\beta N_{L/K}(\beta) \in FL^2$
by \cite[Chapter VII, Theorem 3.8]{La-05}.

Since $\beta \in \sum L^2$, we have 
$N_{L/K}(\beta) \in \sum K^2 \subset \sum L^2$. 
Since $d \in \sum E^2 \subset \sum F^2$ and $L = F(\sqrt{d})$, 
we have $F \cap \sum L^2 = \sum F^2$.  
These two observations give 
\[\beta N_{L/K}(\beta) \in \left(F \cap \sum L^2\right) L^2 = 
\left(\sum F^2\right) L^2 \subset EF^2 L^2 \subset K L^2.\]
Thus $\beta \in KL^2$ because $N_{L/K}(\beta) \in K$.
Therefore $\sum L^2 \subset KL^2$.

Now assume that $\sum L^2 \subset KL^2$.
Let $a \in \sum F^2$.  Then $a \in \sum L^2$ and \Cref{first lemma} implies 
that $aN_{L/K}(a) \in L^2$.   Since $N_{L/K}(a) = N_{F/E}(a)$, we have
$aN_{F/E}(a) \in L^2 \cap F = F^2 \cup dF^2 \subset EF^2$.
Then $a \in EF^2$ because $N_{F/E}(a) \in E$.  
Therefore, $\sum F^2 \subset EF^2$.
\end{proof}

\begin{proposition}\label{pythag-go-up}
Let $F/E$ be a finite extension of fields with $[F:E] = n$, $n$ odd.
Assume that $\sum F^2 \subset EF^2$.
Then $F_{pyth} = E_{pyth}F$ and $[F_{pyth}:E_{pyth}] = n$.
\end{proposition}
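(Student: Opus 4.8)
The plan is to construct $E_{pyth}$ (and simultaneously $E_{pyth}F$) through chains of quadratic extensions, propagate the hypothesis $\sum F^2 \subset EF^2$ along each chain by means of \Cref{second lemma}, and then recognize the limit field $E_{pyth}F$ as an odd-degree extension of the pythagorean field $E_{pyth}$ to which \Cref{first lemma} applies. The case $\cha(E) = 2$ is trivial, since then every field is pythagorean, $E_{pyth} = E$, $F_{pyth} = F$, and the claim reduces to $[F:E] = n$; so I assume $\cha(E) \neq 2$. Since every sum of squares in $E$ is a sum of squares in $F$, iterating square roots shows $E_{pyth} \subseteq F_{pyth}$, and hence $E_{pyth}F \subseteq F_{pyth}$. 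It therefore suffices to prove that $E_{pyth}F$ is itself pythagorean and that $[E_{pyth}F : E_{pyth}] = n$: the former yields $F_{pyth} \subseteq E_{pyth}F$, and thus $F_{pyth} = E_{pyth}F$.

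The heart of the argument is an induction along a chain $E = E^{(0)} \subset E^{(1)} \subset \cdots \subset E^{(r)}$ inside $E_{pyth}$, where $E^{(i+1)} = E^{(i)}(\sqrt{d_i})$ with $d_i \in \sum (E^{(i)})^2$. Writing $F^{(i)} = E^{(i)}F$, I claim that for every $i$ one has both $[F^{(i)} : E^{(i)}] = n$ and $\sum (F^{(i)})^2 \subset E^{(i)}(F^{(i)})^2$. The base case is the hypothesis. For the degree step, since $[F^{(i)} : E^{(i)}] = n$ is odd and $\sqrt{d_i} \notin E^{(i)}$, the extension $F^{(i)}/E^{(i)}$ admits no quadratic subextension, so $\sqrt{d_i} \notin F^{(i)}$; multiplicativity of degrees in the square $E^{(i)} \subset E^{(i+1)}$, $F^{(i)} \subset F^{(i+1)}$ then forces $[F^{(i+1)} : E^{(i+1)}] = n$. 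For the sum-of-squares condition, I apply \Cref{second lemma} with $(E, F, K, L) = (E^{(i)}, F^{(i)}, E^{(i+1)}, F^{(i+1)})$, whose hypotheses ($d_i \in \sum (E^{(i)})^2$ and $[F^{(i)} : E^{(i)}]$ odd) are exactly what the induction supplies; this transports $\sum (F^{(i)})^2 \subset E^{(i)}(F^{(i)})^2$ to $\sum (F^{(i+1)})^2 \subset E^{(i+1)}(F^{(i+1)})^2$.

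Passing to the direct limit over all such chains, I obtain $[E_{pyth}F : E_{pyth}] = n$ and $\sum (E_{pyth}F)^2 \subset E_{pyth}(E_{pyth}F)^2$. The degree statement holds because a fixed $E$-basis of $F$ spans $E_{pyth}F$ over $E_{pyth}$, and any $E_{pyth}$-linear relation among these $n$ elements would already hold over some finite $E^{(r)}$, contradicting $[E^{(r)}F : E^{(r)}] = n$; the containment holds because a single sum of squares in $E_{pyth}F$ already lies in some $E^{(r)}F$. Finally, set $P = E_{pyth}$ and $G = E_{pyth}F$, so that $G/P$ is a finite extension of odd degree $n$ satisfying $\sum G^2 \subset P G^2$, which is precisely condition (2) of \Cref{first lemma}. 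The equivalence gives condition (1), namely $\sum G^2 = \left(\sum P^2\right)G^2$, and since $P$ is pythagorean we have $\sum P^2 = P^2 \subseteq G^2$, whence $\sum G^2 = G^2$. Thus $E_{pyth}F$ is pythagorean, which completes the proof.

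I expect the main obstacle to be the bookkeeping of the simultaneous induction: keeping the relative degree pinned at $n$ — which relies essentially on $n$ being odd to block a quadratic collapse $\sqrt{d_i} \in F^{(i)}$ — while feeding exactly the right hypotheses into \Cref{second lemma} at each stage, and then carefully transferring both conclusions through the direct limit. Once these are in place, the concluding appeal to \Cref{first lemma}, together with the fact that $E_{pyth}$ is pythagorean, is immediate.
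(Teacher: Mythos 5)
Your proof is correct and takes essentially the same approach as the paper's: an induction along chains of quadratic extensions inside $E_{pyth}$ that propagates the hypothesis via \Cref{second lemma} (with the same odd-degree argument blocking $\sqrt{d_i} \in F^{(i)}$), followed by an application of \Cref{first lemma}. The only cosmetic difference is that the paper invokes \Cref{first lemma} at each finite stage and passes the resulting equality $\sum L^2 = \left(\sum K^2\right)L^2$ to the limit, whereas you pass the containment to the limit field $E_{pyth}F$ and invoke \Cref{first lemma} once there; both are valid, and your version also spells out the degree bookkeeping and the identification $F_{pyth} = E_{pyth}F$ that the paper leaves implicit.
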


\begin{proof}
The statement is trivially true when $\cha(E)=2$. We may therefore assume that $\cha(E)\neq 2$.
Let $K/E$ be any finite extension where $K \subset E_{pyth}$.  Then 
then there exists a finite chain 
$E = E_0 \subset E_1 \subset \cdots \subset E_m = K$ where $E_j/E_{j-1}$ is
a quadratic extension with $E_j = E_{j-1}(\sqrt{d_j})$ and 
$d_j \in \sum E_{j-1}^2$, $1 \le j \le m$.
Let $L = KF$. Then $[L:K] = n$ and by induction, \Cref{second lemma} 
implies that $\sum L^2 \subset K L^2$.  \Cref{first lemma} implies that
$\sum L^2 = \left(\sum K^2 \right) L^2$.  

Since this holds for every finite extension $K/E$ where $K \subset E_{pyth}$, 
it follows that by setting $M = E_{pyth}F$ we have
$\sum M^2 = \left( \sum E_{pyth}^2 \right) M^2 = E_{pyth}^2 M^2 = M^2$.
Thus $M$ is pythagorean, and so $M = F_{pyth}$.
\end{proof}

\begin{proposition}\label{odd prime}
Let $p$ be an odd prime, and let $F/E$ be a finite separable extension of fields of characteristic different from $2$ with 
$[F:E] = p$.
\begin{enumerate}
\item There is an algebraic extension $E_1 / E$ such that with $F_1 = E_1 F$ the 
following statements hold.
\begin{enumerate}
\item $E_1$ and $F$ are linearly disjoint over $E$. In particular
$[F_1 : E_1] = [F:E] = p$.
\item $E_1$ and $F_1$ are pythagorean.
\end{enumerate}

\item Assume that $E$ is not pythagorean and let $a \in \sum E^2$, 
$a \notin E^2$.  
Assume that $F/E$ is a Galois extension.
There is an algebraic extension $E_2/E$ such that with $F_2 = E_2 F$ the
following statements hold.
\begin{enumerate}
\item $E_2$ and $F$ are linearly disjoint over $E$ so that 
$[F_2 : E_2] = [F:E] = p$.
\item $a \notin E_2^2$, and thus $a \notin F_2^2$, so that $E_2$ and $F_2 $
are each not pythagorean.
\item $\sum F_2^2 = F_2^2 \cup aF_2^2 \subset E_2 F_2^2$.
\end{enumerate}

\item If $F/E$ is a Galois extension and $E$ is formally real, 
then it can be arranged in (1) and (2) that $F_1$ and $F_2$ are each formally real.
\end{enumerate}
\end{proposition}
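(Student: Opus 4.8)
The plan is to build the fields $E_1, E_2$ by a transfinite ascent (a Zorn's-lemma / directed-union construction) that forces the desired pythagorean or square-class behavior in the limit, all while keeping $F$ linearly disjoint from the tower. The central mechanism that makes this possible is already in hand: Proposition \ref{pythag-go-up}. It guarantees that whenever $[F:E]=p$ is odd and $\sum F^2 \subset EF^2$, the passage to pythagorean closures preserves the degree, $[F_{pyth}:E_{pyth}]=p$, and $F_{pyth}=E_{pyth}F$. So for part (1), I would set $E_1 = E_{pyth}$ and $F_1 = E_1 F = F_{pyth}$, \emph{provided} the hypothesis $\sum F^2 \subset EF^2$ holds. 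The subtlety is that this hypothesis is not assumed in (1), so first I would record that for part (1) one does not actually need it: one wants $E_1, F_1$ both pythagorean with $[F_1:E_1]=p$, and the honest route is to interleave the two pythagorean closures. Concretely, I would build a tower $E = G_0 \subset G_1 \subset \cdots$ inside a fixed algebraic closure, at each stage adjoining to $G_j$ a square root of a sum of two squares coming from either $G_j$ or $G_j F$, arranged so that the union $E_1 = \bigcup_j G_j$ is pythagorean and $F_1 = E_1 F$ is pythagorean simultaneously; linear disjointness is maintained because each adjoined square root can be chosen to lie in a $2$-extension of $E$ that is linearly disjoint from the odd-degree extension $F/E$ (degrees $2^k$ and $p$ being coprime). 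The main obstacle in (1) is therefore bookkeeping: one must enumerate sums of squares in both $E_1$ and $F_1$ and ensure every one becomes a square in the limit, without ever collapsing the degree $[G_j F : G_j] = p$. The coprimality of $p$ with the $2$-power degrees of the tower is exactly what prevents the collapse, since an odd-degree extension cannot be swallowed by successive quadratic extensions.

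For part (2), the goal is sharper: I want $E_2$ to remain non-pythagorean, with $a$ persisting as a nonsquare sum of squares, while forcing $\sum F_2^2 = F_2^2 \cup aF_2^2 \subset E_2 F_2^2$. The natural strategy is to build $E_2$ as a maximal algebraic extension of $E$ in which $a \notin E_2^2$ and such that $\sum E_2^2 = E_2^2 \cup a E_2^2$ (so $E_2$ has exactly two square classes among sums of squares, represented by $1$ and $a$). A Zorn's-lemma argument produces such a maximal field: the union of a chain of fields in which $a$ stays a nonsquare is again such a field, since $a$ becoming a square would already happen at a finite stage. At the maximal field $E_2$, any sum of squares that were a new square class would contradict maximality, which pins down $\sum E_2^2 = E_2^2 \cup aE_2^2$. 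The Galois hypothesis on $F/E$ is what lets me descend the square-class condition through $F_2 = E_2 F$: using Lemma \ref{first lemma} and Lemma \ref{second lemma} with the odd degree $p$, the containment $\sum F_2^2 \subset E_2 F_2^2$ is equivalent to the norm condition $\alpha N_{F_2/E_2}(\alpha)\in F_2^2$ for all $\alpha \in \sum F_2^2$, and this transfers from the two-square-class structure on $E_2$. The hardest point here is arranging that $a$ does not become a square in $F_2$; this is where I would invoke the odd-degree injectivity of the square-class map (the map $E_2^\times/(E_2^\times)^2 \to F_2^\times/(F_2^\times)^2$ is injective because $[F_2:E_2]=p$ is odd), so $a \notin E_2^2$ forces $a \notin F_2^2$, giving (2b), and the structure $\sum F_2^2 = F_2^2 \cup aF_2^2$ then follows from Lemma \ref{first lemma}(1).

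For part (3), the formally real case, I would run the same two constructions but restrict every adjunction to quadratic steps $G(\sqrt{s})$ with $s$ a \emph{sum of squares} (equivalently, totally positive), which preserves formal reality at each finite stage and hence in the direct limit. Since $F/E$ is Galois and $E$ is formally real, each ordering of $E$ extends to $F$, and the sums-of-squares adjunctions never introduce $\sqrt{-1}$; so the limit fields $F_1, F_2$ remain formally real. Concretely, for (1) I would take $E_1 = E_{pyth}$ (which is formally real when $E$ is, being generated by square roots of totally positive elements), and for (2) I would perform the Zorn construction inside the maximal formally real algebraic extension keeping $a$ a nonsquare, ensuring $a$ is chosen totally positive from the start so that the two-square-class structure is compatible with an ordering. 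I expect the main obstacle across the whole proposition to be part (2): balancing the \emph{maximality} that yields the two-square-class property against the \emph{linear disjointness and odd-degree preservation} required for $F_2$, since these pull in opposite directions and must be reconciled through the norm criterion of Lemma \ref{first lemma} together with the odd-degree injectivity of square classes.
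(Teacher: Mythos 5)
Your proposal reproduces the outer shell of the paper's argument (Zorn's lemma, odd-degree norm tricks via \Cref{first lemma}) but is missing the one mechanism that makes the proof work, and the substitutes you offer in its place fail. In part (1), your tower construction rests on the claim that for $\beta \in \sum (G_jF)^2$ the square root $\sqrt{\beta}$ ``can be chosen to lie in a $2$-extension of $E$'' (or of $G_j$) that is linearly disjoint from $F$. This is false whenever $\beta \notin G_j$: in that case $[G_j(\beta):G_j]=p$, so $p \mid [G_j(\sqrt{\beta}):G_j]$ and $\sqrt{\beta}$ lies in no $2$-extension of $G_j$ whatsoever. What you actually need is an extension $G_{j+1}/G_j$, linearly disjoint from $F$ over $E$ but of no prescribed degree, with $\sqrt{\beta} \in G_{j+1}F$; the existence of such an extension is not coprime-degree bookkeeping --- it is the crux of the proposition, and it is exactly the kind of ``descent of square roots'' whose failure is the engine of the rest of the paper (\Cref{complete splitting}, \Cref{global sums of squares}, \Cref{test}). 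The same circularity poisons part (2): asserting that the norm criterion of \Cref{first lemma} ``transfers from the two-square-class structure on $E_2$'' is assuming the conclusion, since conditions on $E_2$ do not by themselves control $\sum F_2^2$; moreover your Zorn condition for $E_2$ (keep $a$ a nonsquare, force two square classes) omits linear disjointness from $F$, and you acknowledge this tension without resolving it.

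The missing idea is the paper's: take $E_1$ (resp.\ $E_2$) to be a \emph{maximal} algebraic extension of $E$ that is linearly disjoint from $F$ over $E$ (resp.\ and additionally satisfies $a \notin E_2^2$) --- maximality with respect to linear disjointness, not with respect to any square-class condition --- and then, given $\alpha \in \sum F_1^2 \setminus F_1^2$, pass to the Galois closure $M$ of $F_1(\sqrt{\alpha})/E_1$. Writing $L$ for the Galois closure of $F_1/E_1$, one has $[L:E_1] \mid p!$ and $[M:L]$ a $2$-power, so $p$ divides $[M:E_1]$ exactly once; the fixed field $K$ of a Sylow $p$-subgroup of $\gal(M/E_1)$ then satisfies $M=KF_1$, is linearly disjoint from $F$ over $E$, and properly contains $E_1$ (because $[M:E_1]\ge 2p$), contradicting maximality. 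Note $[K:E_1]$ is in general of the form $2^k m$ with $m \mid (p-1)!$, not a $2$-power --- which is precisely why your $2$-extension bookkeeping cannot succeed. Part (2) uses the same Sylow fixed field, split into the cases $a \notin K^2$ (contradicting maximality) and $a \in K^2 \subset M^2$, the latter ruled out by a norm computation on the conjugate generators $\sqrt{\sigma_1(\beta)},\ldots,\sqrt{\sigma_p(\beta)}$ of $M$ over $F_2$; only then does \Cref{first lemma} enter, to convert norm-injectivity into $\sum F_2^2 \subset E_2F_2^2$. Part (3) is obtained by running the identical maximality argument inside a fixed real closure $R$ of $E$; your suggestion to take $E_1=E_{pyth}$ there fails for the same reason as in (1): nothing makes $E_{pyth}F$ pythagorean, and \Cref{pythag-go-up} requires the hypothesis $\sum F^2 \subset EF^2$, which, as you yourself note, is not available.
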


\begin{proof}


(1) 
Let $E_1$ be a maximal algebraic extension of $E$ such that $E_1$ and $F$ are
linearly disjoint over $E$.  Let $F_1 = E_1 F$. Then $[F_1 : E_1] = [F:E] = p$.
We will show that $F_1$ is pythagorean, and thus $E_1$ is also pythagorean.

Suppose that $\alpha \in \sum F_1^2$, $\alpha \notin F_1^2$.
Let $M$ be the Galois closure of $F_1(\sqrt{\alpha})/E_1$.
We now show that $p \mid [M:E_1]$ but $p^2 \nmid [M:E_1]$.
Let $L$ be the Galois closure of $F_1/E_1$.  
Then $[L:E_1] \mid p!$, and thus $p^2 \nmid [L:E_1]$.
Since $L/E_1$ is Galois and $[F_1(\sqrt{\alpha}) L: L] = 1 \text{ or } 2$, 
it follows that $[M:L]$ is a $2$-power, and so $p^2 \nmid [M:E_1]$.

Let $K$ be the fixed field of a Sylow $p$-subgroup of $\gal(M/E_1)$. 
Then $[M:K] = p$ and $p \nmid [K:E_1]$.  
It follows that $K$ and $F_1$ are linearly disjoint over $E_1$ and $M = KF_1$.
Then $K$ and $F$ are linearly disjoint over $E$.  The maximality of $E_1$ implies
that $K = E_1$.  Then $M = KF_1 = F_1$. 
Since $F_1 \subsetneq F_1(\sqrt{\alpha}) \subseteq M$, 
we obtain a contradiction, and thus $F_1$ is pythagorean.

(2) The proof of (2) is  similar to the proof of (1).  Let $E_2$ be a maximal 
algebraic extension of $E$ such that $E_2$ and $F$ are linearly disjoint over $E$
and such that $a \notin E_2^2$.  Let $F_2 = E_2 F$.  
Then $F_2/E_2$ is a Galois extension and $[F_2 : E_2] = [F:E] = p$.
Also, $E_2$ is not pythagorean, and thus $F_2$ is not pythagorean.
We now show that $\sum F_2^2 = F_2^2 \cup a F_2^2$.
Let $\beta \in \sum F_2^2$, $\beta \notin F_2^2$.  

Assume first that $N_{F_2/E_2}(\beta) \in E_2^2$.
Let $M$ be the Galois closure of $F_2(\sqrt{\beta})/E_2$.
We show as in (1) that 
$p \mid [M:E_2]$ but $p^2 \nmid [M:E_2]$.
Let $K$ be the fixed field of a Sylow $p$-subgroup of $\gal(M/E_2)$.
Then $[M:K]=p$ and $p \nmid [K:E_2]$.  Then $K$ and $F_2$ are linearly 
disjoint over $E_2$, and thus $K$ and $F$ are linearly disjoint over $E$.  
If $a \notin K^2$, then the maximality of $E_2$ implies that $K = E_2$, which
leads to a contradiction, as in (1).  Thus $a \in K^2 \subset M^2$.
Since $a \notin E_2^2$ and $[F_2:E_2]$ is odd, we have $a \notin F_2^2$.  
Let $\gal(F_2/E_2) = \{\sigma_1, \ldots, \sigma_p\}$.
Since $F_2/E_2$ is Galois,
$M = F_2(\sqrt{\sigma_1(\beta)}, \ldots, \sqrt{\sigma_p(\beta)})$ and
$a \in F_2 \cap M^2 = \cup \, \sigma_1(\beta)^{e_1} \cdots
\sigma_p(\beta)^{e_p} F_2^2$, where $e_1, \ldots, e_p \in \{0,1\}$.
This is a contradiction because $N_{F_2/E_2}(a) \in a E_2^2 \ne E_2^2$ but
$N_{F_2/E_2}(\sigma_1(\beta)^{e_1} \cdots \sigma_p(\beta)^{e_p}) \in E_2^2$.

It follows that $\sum F_2^2 \subset E_2 F_2^2$ by 
\Cref{first lemma}, (3) $\Rightarrow$ (2).  Thus we may assume that
$\beta \in E_2$.  Then $M = F_2(\sqrt{\beta})$ and so $K = E_2(\sqrt{\beta})$.
Since $a \in E_2 \cap K^2 = E_2^2 \cup \beta E_2^2$, it follows that 
$\beta \in a E_2^2$.  Thus $\beta \in a E_2^2 \subset a F_2^2$.
Therefore $\sum F_2^2 = F_2^2 \cup a F_2^2$.

(3) Suppose that $F/E$ is a Galois extension and that $E$ is formally real, and thus
$F$ is formally real. 
Let $R$ be a real closure of $E$.   
Modify the proofs of (1) and (2) by choosing $E_1$ and $E_2$ maximal algebraic extensions of $E$ \textit{contained in $R$} that are linearly disjoint from $F$ 
over $E$, and for $E_2$ additionally require that $a \notin E_2^2$.  
Since $F/E$ is Galois, we
have that $F \subset R$, and thus it follows that $M$, the Galois closure of $F_1(\sqrt{\alpha})/E_1$ or $F_2(\sqrt{\beta})/E_2$, also lies in $R$.
The rest of the proof of (3) now follows the proof of (1) or (2).
\end{proof}

\begin{corollary}\label{summary}
Let $p$ be an odd prime and let $E$ be any non-pythagorean field that admits 
a Galois extension $F/E$ with $[F:E] = p$.  
\begin{enumerate}
\item Then there exists a non-pythagorean algebraic extension $E_0/E$ such that $E_0$ and $F$ are linearly disjoint over $E$ and setting $F_0 = E_0 F$, we have
$(F_0)_{pyth} = (E_0)_{pyth} F$ and $[(F_0)_{pyth}: (E_0)_{pyth}] = [F:E] = p$.
\item If $E$ is real, then it can be arranged that $E_0$ is real in addition to the
conditions in (1).
\end{enumerate}
In particular, $(E_0)_{pyth}$ is not hereditarily non-pythagorean.
\end{corollary}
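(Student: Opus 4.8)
The plan is to assemble the corollary from \Cref{odd prime} and \Cref{pythag-go-up}, so that essentially all of the real work has already been carried out in those two results; what remains is a careful packaging. First, since $E$ is non-pythagorean, its characteristic is different from $2$ (every field of characteristic $2$ is pythagorean), so $\sum E^2 \ne E^2$ and I may fix an element $a \in \sum E^2$ with $a \notin E^2$. Because $F/E$ is Galois of degree $p$ it is in particular a finite separable extension, so \Cref{odd prime}(2) applies with this choice of $a$. It produces an algebraic extension $E_0 := E_2$ of $E$, and setting $F_0 := F_2 = E_0 F$ we read off the properties needed: by (2)(a) the fields $E_0$ and $F$ are linearly disjoint over $E$ with $[F_0:E_0] = [F:E] = p$; by (2)(b) we have $a \notin E_0^2$ while $a \in \sum E^2 \subseteq \sum E_0^2$, so $E_0$ is non-pythagorean; and by (2)(c) we have $\sum F_0^2 = F_0^2 \cup a F_0^2 \subseteq E_0 F_0^2$.

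The key hypothesis of \Cref{pythag-go-up} is now in hand: $F_0/E_0$ has odd degree $p$ and $\sum F_0^2 \subseteq E_0 F_0^2$. Applying it yields $(F_0)_{pyth} = (E_0)_{pyth} F_0$ together with $[(F_0)_{pyth}:(E_0)_{pyth}] = p$. Since $F_0 = E_0 F$ and $E_0 \subseteq (E_0)_{pyth}$, one has $(E_0)_{pyth} F_0 = (E_0)_{pyth} F$, which gives the stated identity $(F_0)_{pyth} = (E_0)_{pyth} F$ and completes the proof of (1).

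For part (2), if $E$ is real I instead invoke \Cref{odd prime}(3): the construction of $E_2$ can be performed inside a real closure $R$ of $E$, so that $E_0 = E_2 \subseteq R$ is real, while every property used above is retained. The remaining argument then proceeds verbatim.

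Finally, for the concluding assertion I exhibit $(F_0)_{pyth}$ as a witness that $(E_0)_{pyth}$ fails to be hereditarily non-pythagorean. Being a pythagorean closure, $(F_0)_{pyth}$ is pythagorean. Since $F/E$ is separable, write $F = E(\gamma)$ with $\gamma$ separable over $E$, hence over $(E_0)_{pyth}$; then $(F_0)_{pyth} = (E_0)_{pyth} F = (E_0)_{pyth}(\gamma)$ is a proper separable extension of $(E_0)_{pyth}$ of degree $p > 1$, and in particular it is not purely inseparable. Thus $(E_0)_{pyth}$ admits a proper finite extension, not purely inseparable over it, that is pythagorean, so $(E_0)_{pyth}$ is not hereditarily non-pythagorean. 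There is no genuine obstacle in this corollary; the only points requiring care are the bookkeeping identification $(E_0)_{pyth} F_0 = (E_0)_{pyth} F$ and the observation that the witnessing extension is separable, so that the clause excluding purely inseparable extensions in the definition of hereditarily non-pythagorean does not void the conclusion.
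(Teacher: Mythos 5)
Your proposal is correct and follows essentially the same route as the paper's own proof: apply \Cref{odd prime} (2) (and (3) in the real case) to produce $E_0$ and $F_0 = E_0 F$ with $\sum F_0^2 \subseteq E_0 F_0^2$, then feed this hypothesis into \Cref{pythag-go-up}. Your additional bookkeeping --- the identification $(E_0)_{pyth} F_0 = (E_0)_{pyth} F$ and the explicit check that $(F_0)_{pyth}$ is a proper finite separable (hence not purely inseparable) pythagorean extension of $(E_0)_{pyth}$ --- only makes explicit what the paper leaves implicit.
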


\begin{proof}
By \Cref{odd prime} (2), there exists an algebraic extension $E_0/E$ such that with
$F_0 = E_0 F$, we have $E_0$ and $F$ are linearly disjoint over $E$, 
$[F_0:E_0] = p$, $E_0$ is non-pythagorean, and 
$\sum (F_0)^2 \subset E_0 F_0^2$.
If $E$ is real, then by \Cref{odd prime}, we can arrange that $F_0$ is real.
\Cref{pythag-go-up} implies that 
$(F_0)_{pyth} = (E_0)_{pyth} F$ and $[(F_0)_{pyth}: (E_0)_{pyth}] = 
[F_0:E_0] = p$.
\end{proof}

\begin{example}
Let $p$ be an odd prime, and let $E$ be any number field.  There exists a Galois extension $F/E$ of degree $p$  (such an extension can be found inside an abelian extension of $E$ given by adjoining a primitive $p^n$-th root of unity for sufficiently large $n$ such that $p$ divides the degree of the abelian extension).   Let $E_0/E$ be a non-pythagorean algebraic extension as in \Cref{summary}. Thus $(E_0)_\mathrm{pyth}$ is not hereditarily non-pythagorean. In particular, $E_0/E$ is infinite dimensional by \Cref{corollary of number fields} and $E_0$ is not Galois over any number field, by \Cref{new}.
\end{example}

\noindent
Remark: 
\Cref{odd prime} (2), (3), as well as \Cref{summary} can be extended to the situation where the Galois extension F/E has any odd degree (not necessarily prime).  The Feit-Thomson theorem (\cite{F-T}) implies that the Galois group of 
$F/E$ is solvable. 
Since $M/F_2$ is a (solvable) $2$-extension and $F_2/E_2$ is a solvable extension,
it follows that $M/E_2$ is a solvable Galois extension.  
Since $[M:F_2]$ is a $2$-power, $n$ is odd, and $\gal(M/E_2)$ is a solvable 
group, a theorem of P. Hall (\cite{H}, p. 141) implies that $\gal(M/E_2)$ has a subgroup of order $n$.  We let $K$ be the subfield of $M$ 
corresponding to this subgroup of order $n$.  The rest of the proof of 
\Cref{odd prime} (2), (3) and \Cref{summary} proceeds as above.  

\medskip
\noindent
Acknowledgements:  We thank Karim Becher, Eberhard Becker, and Parul Gupta
for useful conversations on these topics. 
 Financial support by CONICYT (proyecto FONDECYT 11150956) and by the Universidad de Santiago (USACH. proyecto DICYT, Codigo 041933G) are gratefully acknowledged.

\end{document}